\RequirePackage{fix-cm}
\makeatletter
\def\cl@chapter{}
\makeatother
\documentclass[smallextended, envcountsect]{svjour3}
\usepackage[a4paper,hmargin=3cm,vmargin=3cm]{geometry}

\input{preamble}

\smartqed
\journalname{}
\begin{document}

\maketitle

\begin{abstract}
This paper studies stochastic algorithms for minimizing paraconvex functions, a function class that generalizes weakly convex functions and includes, for instance, H\"older smooth functions and compositions of convex functions with H\"older smooth maps. We first establish the convergence of the stochastic subgradient method and the stochastic proximal subgradient method for unconstrained and composite paraconvex optimization, respectively. The analysis is then extended to a broad family of algorithms that access the objective through stochastic models satisfying certain approximation-quality and continuity assumptions. The main principle that underlies the convergence analysis is that the algorithms under consideration can be interpreted as perturbed descent methods on the generalized Moreau envelope of the original paraconvex problem.
\end{abstract}

\keywords{
Non-smooth optimization \and Generalized Moreau--Yoshida regularization \and Paraconvexity \and Stochastic model-based methods
}

\subclass{
65K05 \and 65K10 \and 90C15 \and 90C30
}

%
\section{Introduction}

Stochastic optimization underlies many modern aspects of learning from data, where the goal is to learn a decision rule given a limited number of samples that still can generalize well to the entire data population. Solving such problems amounts to minimizing the \emph{regularized population risk}:

\begin{equation}\label{eq:intro_so}
    \min_{x \in \R^n} F(x) \coloneqq f(x) + \varphi(x), \quad \text{where}~ f(x) = \E[\xi \sim \pi]{f(x, \xi)}. \tag{SO}
\end{equation}

Here, $\xi$ encodes the data, which is assumed to follow some probability distribution $\pi$. The function $f$ evaluates the expected loss of the parametrized decision rule, while the regularizer $\varphi$ enforces constraints on the learned model $x$. In the setting where $f(., \xi)$ are smooth, the most successful and widely used algorithm to solve \cref{eq:intro_so} is the stochastic proximal {gradient} descent method, which can be traced back to the forward-backward splitting algorithm introduced by Lions and Mercier \cite{LionsMercier1979}:
\begin{align}\label{eq:psg_scheme}
\begin{cases}
    \text{Draw}~ \xi_t \sim \pi, \\
    \text{Update}~ x_{t+1} = \prox[\gamma_t \varphi]{x_t - \gamma_t \nabla f(x_t, \xi_t) },
\end{cases} \tag{PSG}
\end{align}
where $\gamma_t$ is the step size and $\prox[\gamma_t \varphi]{.}$ is the proximal mapping
\begin{align*}
    \prox[\gamma_t \varphi]{x} = \margmin[y]{ \varphi(y) + \frac{1}{2\gamma_t} \norm[2]{y-x} }.
\end{align*}

Nonsmooth problems can be solved by the same procedure, where the gradients are replaced by suitable generalized subdifferentials (see, for example, \cite[Chapter 8]{rockafellar_variational_1998}, \cite[Chapter 1]{mordukhovich_variational_2006}, or \cite[Chapter 2]{clarke_nonsmooth_1998}). Theoretical analysis of the scheme \cref{eq:psg_scheme} and related variants has been extensively studied in various settings, with convergence evaluated in terms of an appropriate accuracy or stationarity measure \cite{lan_optimal_2012,ghadimi_optimal_2012,ghadimi_optimal_2013,ghadimi_mini-batch_2016}.


Davis and Drusvyatskiy \cite{davis_stochastic_2019} establish for the first time the convergence of stochastic proximal subgradient methods, as well as other algorithms that have access to stochastic models of the function $f$ satisfying suitable approximation and Lipschitz properties, for a broad class of nonsmooth nonconvex functions, namely, \emph{weakly convex} functions \cite{vial_strong_1983}. The key ingredient for their analysis is the \emph{Moreau envelope} \cite{Moreau1965}, which is a smooth approximation of weakly convex problems. The gradient norm of the Moreau envelope then serves as a natural near-stationarity measure for monitoring the convergence of optimization algorithms. This work has since influenced many subsequent works on nonsmooth nonconvex optimization \cite{mai_convergence_2020,deng_minibatch_2021,alacaoglu_convergence_2021}.


The quadratic nature of weak convexity, however, is not always intrinsic to the geometry of nonconvex objectives. In this work, we focus on the class of \emph{paraconvex} functions \cite{Rolewicz1979GammaParaconvex}, which generalizes the class of weakly convex functions while still enjoying favorable variational properties that allow for the use of subgradient-based optimization methods. In particular, a function $f: \R^n \to \R \cup \set{+\infty}$ is said to be \emph{$(\rho,p)$-paraconvex}, for $\rho>0$ and $p>1$, if, for any $x,y\in\R^n$ and $t\in[0,1]$, the following inequality holds:
\begin{equation*}
    f(tx+(1-t)y)\le tf(x)+(1-t)f(y)+\frac{\rho}{p}t(1-t)\|x-y\|^p.
\end{equation*}

Formal definitions and properties of this function class are given in \cref{sec:paraconvexity}. Notably, this class includes \emph{H\"older smooth} functions, or more generally, functions given by the composition of a Lipschitz convex function with a H\"older smooth mapping; both are of great interest for the nonconvex nonsmooth optimization research community \cite{ghadimi_conditional_2019,ito_parameter-free_2023}.



Our analysis calls for a generalization of the Moreau envelope, from which a suitable near-stationarity measure can be built. Such a generalization is provided in \cref{sec:generalized_moreau}. Algorithms considered in this paper can be interpreted as perturbed descent methods on the generalized Moreau envelope.


\textbf{Contributions and organization of the paper.}
The goal of this paper is to study the convergence of several stochastic optimization methods for minimizing paraconvex objectives. In \cref{sec:preliminaries}, we formally define the function classes of interest and establish their main properties. We also introduce the generalized Moreau--Yoshida regularization and its associated generalized proximal mapping, whose generalized subdifferential allows us to define a natural near-stationarity measure that will be used throughout our convergence analysis. 

In \cref{sec:ssgd}, we first consider the unconstrained version of \cref{eq:intro_so}, corresponding to $\varphi=0$, and show that the stochastic subgradient method converges at the rate $\mathcal{O}(T^{1/p-1})$ for $(\rho,p)$-paraconvex objectives. In \cref{sec:proximal}, we establish the same convergence rate for the stochastic proximal subgradient method when $\varphi$ is a convex regularizer. A notable feature of our analysis is that we only require the stochastic subgradient to satisfy a \emph{bounded $p$-th moment} condition, which is weaker than the usual bounded second-moment assumption used in the analysis of weakly convex optimization. This suggests that paraconvex optimization may provide a suitable framework for real-world applications in the presence of heavy-tailed noise.

We then study a more general algorithmic framework, namely, stochastic model-based methods, in which the oracle provides access to the loss $f$ at a point $x$ through a stochastic model $f_x(\cdot,\xi)$, where $\xi\sim\pi$. This framework was also studied by Davis and Drusvyatskiy in the weakly convex setting \cite{davis_stochastic_2019}. It includes the stochastic proximal subgradient method as a special case, as well as the stochastic prox-linear method, a generalization of the Gauss--Newton method \cite{duchi_stochastic_2018}. We provide a comprehensive convergence analysis of this framework for minimizing paraconvex functions. Finally, we present numerical experiments demonstrating the benefits of using a paraconvex objective over a weakly convex one for robust phase retrieval with heavy-tailed noise.

\textbf{Related works.} We conclude the introduction with a discussion on the related literature. To the best of our knowledge, the only work that explicitly studies paraconvex optimization is the recent paper by Rahimi et al. \cite{rahimi_projected_2026}, which studies the projected subgradient method with several step-size schemes under the Hölderian error bound condition, which is related to the weak sharpness condition \cite{Davis2018SubgradientMF,burke_weak_1993}, or more generally to the classical \L{}ojasiewicz inequality \cite{bolte_lojasiewicz_2007}. Thanks to this error bound condition, local linear convergence to a global minimum can be established. Our work goes beyond these results by considering the general proximal subgradient method in the stochastic setting, without imposing the error bound assumption.

A recent line of work develops optimization schemes based on the high-order Moreau envelope and its associated high-order proximal mapping, which likewise fall within the framework of the generalized Moreau--Yosida regularization introduced in \cref{def:gen_moreau_2}. Their fundamental properties are studied in \cite{kabgani_fundamental_2026,kabgani_moreau_2025}. High-order proximal-point methods are studied in \cite{ahookhosh_minimizing_2026}, where the high-order Moreau envelope is used as a smoothing scheme. Ahookhosh and Nesterov \cite{ahookhosh_high-order_2024} study inexact high-order proximal-point methods for convex optimization, while inexact high-order smoothing schemes are studied by Kabgani and Ahookhosh in \cite{kabgani_itsdeal_2025,kabgani_itsopt_2026} for nonsmooth nonconvex optimization. An important contribution of these works is that, under the Kurdyka--\L{}ojasiewicz (KL) condition \cite{Attouch2008ProximalAM}, choosing a suitable regularization order depending on the KL exponent can yield linear convergence rates. The main challenge in this framework is that each iteration requires computing---if only approximately---a high-order proximal point and the corresponding high-order Moreau envelope. By contrast, in the algorithms considered in this paper, the generalized Moreau envelope and generalized proximal mapping enter only through the \emph{convergence analysis}, every iteration involves only the classical proximal mapping of a convex function.

We should also mention the line of studies on minimizing H\"older smooth functions. Inspired by the seminal work of Nesterov on universal gradient methods \cite{nesterov_gradient_2013} that can adapt to the smoothness level of convex objectives, Ghadimi-Lan-Zhang \cite{ghadimi_generalized_2019} introduce universal algorithms for the nonconvex smooth setting. Ghadimi \cite{ghadimi_conditional_2019} presents a unified analysis of the conditional gradient method for minimizing \cref{eq:intro_so} with $F$ H\"older smooth. To the best of our knowledge, this is the only work with complexity guarantees for stochastic composite H\"older smooth optimization. However, their stochastic algorithm \cite[Algorithm 4]{ghadimi_conditional_2019} requires a large batch size of order $\mathcal{O}(\epsilon^{-2})$ to reach an $\epsilon$-stationary point measured by the Frank-Wolfe gap.


\section{Preliminaries}\label{sec:preliminaries}
Throughout this paper, we work in the Euclidean space $\R^n$, equipped with its canonical scalar product $\langle \cdot,\cdot\rangle$ and the associated norm $\|\cdot\|$.
We denote by $\unitball$ the unit ball and by $\ball[x]{r}$ the open ball centered at $x$ with radius $r>0$. 

Given an extended-real-valued function $f:\R^n\to\R\cup\{+\infty\}$, we denote by $\dom f$ the set of points where $f$ is finite.
The \emph{Fr\'echet subdifferential} of $f$ at $\bar{x}\in\dom f$ is defined as
\begin{align*}
\widehat{\partial} f(x)
\coloneqq \Bigl\{\, v \in \R^n :
\liminf_{y \to x,\, y \neq x}
\frac{f(y) - f(x) - \langle v, y - x \rangle}{\|y - x\|} \ge 0
\Bigr\}.
\end{align*}

Following \cite{rockafellar_variational_1998}, this object is also referred to as the \emph{regular subdifferential}.
Elements of $\hat{\partial} f(\bar{x})$ are called \emph{Fr\'echet (regular) subgradients}.
If $f(\bar{x}) = +\infty$, we set $\hat{\partial}f(\bar{x})=\emptyset$.
Equivalently, $v$ is a Fr\'echet subgradient of $f$ at $\bar{x}$ if and only if
\begin{align*}
f(x) \ge f(\bar{x}) + \langle v, x - \bar{x}\rangle + o(\|x - \bar{x}\|),
\quad\text{with}\quad
\lim_{x \to \bar{x}} \frac{o(\|x - \bar{x}\|)}{\|x - \bar{x}\|} = 0.
\end{align*}

It is well known that the Fr\'echet subdifferential satisfies a \emph{fuzzy sum rule}; see \cite{Fabian1989} and \cite[Theorem~2.33]{mordukhovich_variational_2006}. In words, any Fr\'echet subgradient of $f+g$ at $\bar x$ can be approximated up to
arbitrary precision by the sum of a Fréchet subgradient of $f$ at a nearby point
and a Fréchet subgradient of $g$ at a nearby point.



The \emph{Mordukhovich limiting subdifferential} of $f$ at $\bar{x}$ is defined as
\begin{align*}
\partial f(x)
\coloneqq \left\{\, v \in \R^n \;:\;
\exists\, x_k \to x,\; f(x_k) \to f(x),\;
v_k \in \widehat{\partial} f(x_k),\; v_k \to v \,\right\}.
\end{align*}

Intuitively, \( \partial f(x) \) generalizes the notion of the gradient to nonsmooth settings. When \( f \) is continuously differentiable, the limiting subdifferential reduces to the singleton
\begin{align*}
\partial f(x) = \{\, \nabla f(x) \,\}.
\end{align*}

For convex functions, \( \partial f(x) \) coincides with the usual convex subdifferential of convex analysis.
The quantity
$
\mathrm{dist}(0, \partial f(x)) = \inf\{\, \|v\| : v \in \partial f(x) \,\}
$
thus measures how far the point \( x \) is from satisfying a first-order optimality condition. We say that $f$ is \emph{(Fréchet) regular} at $x$ if $\partial f(x) = \hat{\partial} f(x)$.

The following sum rule for the limiting subdifferential is classical (see \cite{rockafellar_variational_1998}):
if $f_1,f_2:\R^n\to\R\cup\{+\infty\}$ are lower semicontinuous (l.s.c.), with $f_2$ locally Lipschitz at $x\in\dom f_1\cap\dom f_2$, then
\begin{align*}
\partial (f_1+f_2)(x) \subseteq \partial f_1(x) + \partial f_2(x).
\end{align*}

\subsection{Paraconvex functions}\label{sec:paraconvexity}
In this section, we provide formal definitions for the classes of paraconvex functions studied in this paper, together with some of their key properties. The notions of paraconvexity of functions and multifunctions were introduced by S. Rolewicz in 1979 \cite{Rolewicz1979GammaParaconvex}, which have then been extensively studied by Rolewicz in a series of works \cite{Rolewicz1981Phi2Subdifferentiability,Rolewicz1999AlphaMonotone,Rolewicz2000AlphaParaconvex,Rolewicz2003PhiConvex,Rolewicz2005ParaconvexAnalysis}, by Jourani \cite{jourani_96,Jourani1996OpenMapping}, by Ngai and coauthors \cite{ngai_paraconvex_2008,van_ngai_variational_2022}, and find connections with some other works \cite{DaniilidisMalick2005LowerC1C2,Rockafellar1982Favorable}. The following definition is standard.

\begin{definition}\label{def:paraconvex}
    For given $p \in (1, 2]$, $\rho > 0$, a proper function $f: \R^n \to \R \cup \set{+\infty}$ is called $(\rho, p)$-paraconvex if for all $x, y \in \R^n$ and $t \in [0, 1]$, the following holds:
    \begin{equation}\label{ineq-paraconvex1}
        f(tx+(1-t)y)\le tf(x)+(1-t)f(y)+\frac{\rho}{p}t(1-t)\|x-y\|^p.
    \end{equation}
\end{definition}

The case $p=2$ is of particular interest, since it corresponds to the notion of \emph{weak convexity} \cite{vial_strong_1983}. The convergence of stochastic optimization algorithms for minimizing weakly convex functions has been established by Davis and Drusvyatskiy in their seminal work \cite{davis_stochastic_2019}, and has been extensively studied and developed since then. The case $p>2$ reduces to standard convexity, see, for example, \cite{Rolewicz1979GammaParaconvex,Rolewicz2000AlphaParaconvex}. Henceforth, we restrict our studies to the case $p \in (1, 2]$.


Paraconvexity also implies \emph{approximate convexity}, in the sense introduced by Ngai-Luc-Théra \cite{NgaiLucThera2000ApproxConvex}. Therefore, local Lipschitzness of paraconvex functions follows from the local Lipschiztness of approximate convex functions established in \cite[Proposition 3.2]{NgaiLucThera2000ApproxConvex}. Nice variational properties of paraconvex functions then follow, in the sense that all the usual subdifferentials coincide. The following proposition summarizes these properties.

\begin{proposition}
    Let $f: \R^n \to \R \cup \set{+\infty}$ be a proper l.s.c. $(\rho, p)$-paraconvex function with $p \in (1, 2]$ and $\rho > 0$. Then the following claims hold:
    \begin{itemize}
        \item[(i)] $f$ is locally Lipschitz over $\operatorname{int}\dom{f}$.
        \item[(ii)] The Fréchet subdifferential, the limiting subdifferential, and the Clark subdifferential of $f$ coincide. Throughout the rest of the paper, for paraconvex functions, we will simply refer to their subdifferential. As a consequence, the subdifferential of $f$ is non-empty over $\operatorname{int}\dom{f}$. 
    \end{itemize}
\end{proposition}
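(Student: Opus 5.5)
The plan is to reduce both claims to the theory of approximately convex functions of Ngai--Luc--Th\'era \cite{NgaiLucThera2000ApproxConvex}, as indicated just before the statement. First I will show that a $(\rho,p)$-paraconvex $f$ is approximately convex at every $\bar x\in\dom f$. That means: for every $\varepsilon>0$ there is $\delta>0$ such that for all $x,y\in\ball[\bar x]{\delta}$ and $t\in[0,1]$,
\begin{equation*}
f(tx+(1-t)y)\le tf(x)+(1-t)f(y)+\varepsilon t(1-t)\|x-y\|.
\end{equation*}
This follows directly from \cref{ineq-paraconvex1}. Write $\|x-y\|^p=\|x-y\|^{p-1}\|x-y\|$ and note that $\|x-y\|^{p-1}\le(2\delta)^{p-1}$ on the ball. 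Since $p>1$, choosing $\delta$ with $\frac{\rho}{p}(2\delta)^{p-1}\le\varepsilon$ works. The only place $p>1$ is used is this step.

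For claim (i), I will take $\bar x\in\operatorname{int}\dom f$ and invoke \cite[Proposition 3.2]{NgaiLucThera2000ApproxConvex}: an l.s.c.\ approximately convex function is locally Lipschitz around interior points of its domain. If I wanted to avoid citing it, I would argue as for convex functions. Lower semicontinuity and a Baire-category argument on the sets $\{f\le k\}\cap\overline{\ball[\bar x]{r}}$ give local boundedness above near $\bar x$. Local boundedness below then comes from paraconvexity along the segment through $\bar x$. Finally, the standard three-point slope argument gives Lipschitz continuity; the extra term $\frac{\rho}{p}t(1-t)\|x-y\|^p$ is absorbed because it is bounded on a small ball.

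For claim (ii), the key intermediate result is a global subgradient inequality. For $v\in\widehat\partial f(x)$ I will show
\begin{equation*}
f(y)\ge f(x)+\langle v,y-x\rangle-\frac{\rho}{p}\|y-x\|^p\quad\text{for all } y.
\end{equation*}
To get it, rearrange \cref{ineq-paraconvex1} with the point $x+t(y-x)$ as
\begin{equation*}
\frac{f(x+t(y-x))-f(x)}{t}\le f(y)-f(x)+\frac{\rho}{p}(1-t)\|y-x\|^p.
\end{equation*}
Then let $t\downarrow0$ and use the Fr\'echet inequality on the left-hand side.

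Conversely, any $v$ satisfying this inequality is a Fr\'echet subgradient, since $\|y-x\|^p=o(\|y-x\|)$. The inequality is stable under limits: if $x_k\to x$, $f(x_k)\to f(x)$ and $v_k\to v$, passing to the limit in the inequality for $(x_k,v_k)$ shows that it holds for $(x,v)$. Hence $\partial f(x)\subseteq\widehat\partial f(x)$, and the reverse inclusion always holds, so the two coincide. For the Clarke subdifferential on $\operatorname{int}\dom f$, I will use that $f$ is locally Lipschitz there by (i). In that case the Clarke subdifferential is the closed convex hull of $\partial f(x)$. Since $\widehat\partial f(x)$ is closed and convex, this gives equality. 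Nonemptiness on the interior follows because the limiting subdifferential of a locally Lipschitz function is nonempty.

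The main obstacle I expect is (i) in full generality, namely local boundedness of an l.s.c.\ proper paraconvex function near interior points of its domain. This is exactly what the cited \cite[Proposition 3.2]{NgaiLucThera2000ApproxConvex} handles, so I will rely on it rather than reprove the Baire argument.
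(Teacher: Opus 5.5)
Your part (i) is the paper's argument. Paraconvexity gives approximate convexity in the sense of Ngai--Luc--Th\'era (your choice $\frac{\rho}{p}(2\delta)^{p-1}\le\varepsilon$ is exactly right), and local Lipschitzness is quoted from their Proposition~3.2.

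For (ii) you take a different and more self-contained route. The paper simply cites their Theorem~3.6 for the coincidence of all subdifferentials and Clarke's Proposition~2.1.2 for nonemptiness. You instead derive $f(y)\ge f(x)+\langle v,y-x\rangle-\frac{\rho}{p}\|y-x\|^p$ for every $v\in\widehat\partial f(x)$ (the implication (i)$\Rightarrow$(ii) of \cref{charac-thm}, obtained directly). You then observe that this inequality characterizes $\widehat\partial f(x)$, and pass it to the limit. That argument is correct at every point of $\operatorname{dom} f$, and it isolates the inequality the later descent lemmas actually use. (Minor: contrary to your remark, $p>1$ is used a second time, in $\|y-x\|^p=o(\|y-x\|)$.)

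The gap is the Clarke subdifferential. Claim (ii) is not restricted to $\operatorname{int}\operatorname{dom} f$, yet your argument for it relies on local Lipschitzness. So it says nothing at points of $\operatorname{dom} f\setminus\operatorname{int}\operatorname{dom} f$, nor anywhere when the interior is empty (e.g.\ $f$ the indicator of a segment in $\R^2$). There the Clarke subdifferential is $\{u:(u,-1)\in\operatorname{cl}\operatorname{conv}N_{\operatorname{epi} f}(x,f(x))\}$. The limiting normal cone to the epigraph contains, besides the rays $\lambda(v,-1)$ with $v\in\partial f(x)$, the horizontal normals $(w,0)$ with $w\in\partial^\infty f(x)$, which you never control.

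Your own tools repair this. Suppose $\lambda_k v_k\to w$ with $\lambda_k\downarrow0$, $v_k\in\widehat\partial f(x_k)$, $x_k\to x$, $f(x_k)\to f(x)$. Multiply the global inequality at $(x_k,v_k)$ by $\lambda_k$ and let $k\to\infty$ to get $\langle w,y-x\rangle\le0$ for all $y\in\operatorname{dom} f$. Consequently every limiting normal $(u,-\mu)$ satisfies
\[
\langle u,y-x\rangle-\mu\bigl(\alpha-f(x)\bigr)\le\mu\,\tfrac{\rho}{p}\|y-x\|^p\quad\text{for all }(y,\alpha)\in\operatorname{epi} f .
\]
The set of $(u,-\mu)$, $\mu\ge0$, with this property is a closed convex cone, so it contains the Clarke normal cone. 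Taking $\mu=1$ and $\alpha=f(y)$ shows that every Clarke subgradient satisfies your global inequality and hence lies in $\widehat\partial f(x)$. With this addition (or by invoking Theorem~3.6 of Ngai--Luc--Th\'era, as the paper does) the proof of (ii) is complete.
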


The coincidence of subdifferentials of approximate convex functions has been established in \cite[Theorem 3.6]{NgaiLucThera2000ApproxConvex}. The non-emptiness property then follows directly from the local Lipschitzness as a well-established result, see \cite[Proposition 2.1.2]{clarke_optimization_1990}. The following subdifferential characterization of paraconvex functions has been established in \cite{ngai_paraconvex_2008}.

\begin{theorem}[Theorem 8 in \cite{ngai_paraconvex_2008}]\label{charac-thm} Let $f:\R^n\rightarrow\R\cup\{+\infty\} $ be a proper l.s.c. function. For $p \in (1, 2], \rho>0$, consider the three statements:
\begin{itemize}
\item[(i)] The function $f$ is $(\rho, p)$-paraconvex.
\item[(ii)] For all $x,y\in \R^n,$ $u\in\partial f(x),$ one has
\begin{equation}\label{charac-ineq1}
\langle u,y-x\rangle \le f(y)-f(x)+\frac{\rho}{p}\|x-y\|^p.
\end{equation}
\item[(iii)] For all $x,y\in\R^n;$ all $u\in \partial f(x),$ $v\in \partial f(y),$ one has
\begin{equation}\label{charac-ineq2}
\langle u-v,x-y\rangle\ge -\frac{2\rho}{p}\|x-y\|^p.
\end{equation}
\end{itemize}
One has $(i)\Rightarrow (ii)\Rightarrow (iii).$ Moreover, if (ii) holds then $f$ is $(\rho 2^{2-p}, p)$-paraconvex; while (iii) holds then $f$ is $(2\rho, p)$-paraconvex.
\end{theorem}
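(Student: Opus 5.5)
We prove the chain (i)$\Rightarrow$(ii)$\Rightarrow$(iii), then the two converse statements with the degraded constants, using only the definition and the coincidence of subdifferentials for paraconvex functions.

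\emph{(i)$\Rightarrow$(ii).} Fix $x,y$ and $u\in\partial f(x)$; we may assume $f(y)<\infty$. By the preceding proposition, $u$ is a Fréchet subgradient. Apply \eqref{ineq-paraconvex1} to the points $y,x$ with weight $t\in(0,1)$. Writing $z_t=x+t(y-x)$, this gives
\[
f(z_t)-f(x)\le t\bigl(f(y)-f(x)\bigr)+\tfrac{\rho}{p}t(1-t)\|x-y\|^p .
\]
The Fréchet inequality yields $f(z_t)-f(x)\ge t\langle u,y-x\rangle+o(t)$. Dividing by $t$ and letting $t\downarrow0$ gives \eqref{charac-ineq1}.

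\emph{(ii)$\Rightarrow$(iii).} Write \eqref{charac-ineq1} at $(x,u)$ toward $y$, and at $(y,v)$ toward $x$, then add. The function values cancel, leaving $\langle u,y-x\rangle+\langle v,x-y\rangle\le \frac{2\rho}{p}\|x-y\|^p$, which is \eqref{charac-ineq2}.

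\emph{(ii)$\Rightarrow$ $(\rho2^{2-p},p)$-paraconvexity.} Let $z=tx+(1-t)y$ and assume first $\partial f(z)\neq\emptyset$. Take $u\in\partial f(z)$ and apply (ii) from $z$ to $x$ and from $z$ to $y$. Multiply by $t$ and $1-t$ respectively and add. The linear terms cancel because $t(x-z)+(1-t)(y-z)=0$, giving
\[
f(z)\le tf(x)+(1-t)f(y)+\tfrac{\rho}{p}\bigl[t(1-t)^p+(1-t)t^p\bigr]\|x-y\|^p .
\]
It remains to show $t(1-t)^p+(1-t)t^p=t(1-t)\bigl[(1-t)^{p-1}+t^{p-1}\bigr]\le 2^{2-p}t(1-t)$. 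This follows from concavity of $s\mapsto s^{p-1}$ for $p\le2$: $(1-t)^{p-1}+t^{p-1}\le 2(1/2)^{p-1}=2^{2-p}$.

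If $\partial f(z)=\emptyset$, we need an approximation argument, and this is the main obstacle. I would use that $\partial f$ is nonempty on a dense subset of $\dom f$ (by Ekeland / density of Fréchet-subdifferentiable points). Pick $z_k\to z$ with $f(z_k)\to f(z)$ and $\partial f(z_k)\neq\emptyset$. Apply the inequality at $z_k$ with points $x_k=x+(z_k-z)$ and $y_k=y+(z_k-z)$. This is not quite enough, since $f(x_k)$ need not converge to $f(x)$. Instead I would avoid shifting: redo the computation with $u_k\in\partial f(z_k)$ directly toward $x$ and $y$. The linear terms then leave a residual $\langle u_k,\,z-z_k\rangle$. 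This residual vanishes provided $u_k$ stays bounded, which holds when $z\in\operatorname{int}\dom f$ by local Lipschitzness. On the boundary I would choose $z_k$ via a proximal-type (Ekeland) construction so that $\|u_k\|\,\|z_k-z\|\to0$. Lower semicontinuity then passes to the limit.

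\emph{(iii)$\Rightarrow$ $(2\rho,p)$-paraconvexity.} Restrict to the segment $\phi(s)=f(y+s(x-y))$. Monotonicity-type inequality \eqref{charac-ineq2} gives, for subgradients along the segment, $\langle u_s-u_r,x-y\rangle\ge-\frac{2\rho}{p}|s-r|^p\|x-y\|^p$. Integrate this via the mean value inequality for limiting subdifferentials (Lebourg/Zagrodny, valid since $f$ is locally Lipschitz where finite). Comparing $\phi(t)$ with the chord as in the convex case produces an error of order $\frac{2\rho}{p}\|x-y\|^p\,[t(1-t)^p+(1-t)t^p]\le \frac{2\rho}{p}t(1-t)\|x-y\|^p$ after using $(1-t)^{p-1},t^{p-1}\le1$. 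The bookkeeping of the mean-value points is routine; the delicate part is again handling points where $f$ is not locally Lipschitz, treated by the same density/approximation device as above.
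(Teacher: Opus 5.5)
The paper gives no proof of this theorem; it quotes Theorem 8 of Ngai--Penot. So your argument has to stand on its own. Your steps (i)$\Rightarrow$(ii) and (ii)$\Rightarrow$(iii) are correct. So is the computation giving $\rho2^{2-p}$ at points $z$ with $\partial f(z)\neq\emptyset$.

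\textbf{The gap: (iii)$\Rightarrow(2\rho,p)$-paraconvexity.} Your final estimate is false. Write $t(1-t)^p+(1-t)t^p=t(1-t)\bigl[(1-t)^{p-1}+t^{p-1}\bigr]$. For $p<2$ and $t\in(0,1)$ the bracket exceeds $1$, since $s^{p-1}>s$ on $(0,1)$. The bounds $(1-t)^{p-1},t^{p-1}\le1$ only give $2t(1-t)$.

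An error of this shape comes from comparing each mean-value point with $z$ itself. In effect that is ``(iii)$\Rightarrow$(ii) with $2\rho$'' followed by your concavity bound. It yields at best $(2^{3-p}\rho,p)$-paraconvexity, never $(2\rho,p)$.

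The missing idea is to apply (iii) directly between the two mean-value points lying on opposite sides of $z$. Set $\phi(s)=f(y+s(x-y))$. Take $r\in(0,t)$, $s\in(t,1)$ and subgradients $w_r,w_s$ with $\phi(t)-\phi(0)=t\langle w_r,x-y\rangle$ and $\phi(1)-\phi(t)=(1-t)\langle w_s,x-y\rangle$. Then
\[
t\phi(1)+(1-t)\phi(0)-\phi(t)=t(1-t)\langle w_s-w_r,x-y\rangle\ge-\tfrac{2\rho}{p}\,t(1-t)(s-r)^{p-1}\|x-y\|^p ,
\]
and $0<s-r\le 1$ gives exactly the constant $2\rho$. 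Note also that dividing (iii) by $s-r$ produces $(s-r)^{p-1}$, not $|s-r|^p$ as you wrote.

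\textbf{Circular use of Lipschitzness.} Under (iii), or (ii), alone, $f$ is merely proper and l.s.c. Local Lipschitzness is a \emph{consequence} of paraconvexity. Moreover, ``locally Lipschitz where finite'' fails even for convex functions at boundary points of the domain. With only lower semicontinuity, two independent approximate mean-value sequences $(x_k,x_k^*)\to c_1$ and $(y_j,y_j^*)\to c_2$ leave cross terms such as $\langle x_k^*,y_j-c_2\rangle$, with no bound on $x_k^*$. So the two-point bookkeeping is not routine.

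An order of steps that works:
\begin{itemize}
\item[(a)] Apply one Zagrodny approximate mean value theorem against a \emph{fixed} $u\in\partial f(x)$. There are no cross terms, and this gives (ii) with $2\rho$ in place of $\rho$.
\item[(b)] Your (ii)$\Rightarrow$(i) step then gives $(2^{3-p}\rho,p)$-paraconvexity. Hence $\operatorname{dom} f$ is convex, $f$ is locally Lipschitz on its relative interior, and the Clarke and limiting subdifferentials coincide there.
\item[(c)] On the relative interior, Lebourg's theorem plus the pairing above gives $(2\rho,p)$.
\item[(d)] Reach boundary points by replacing $x,y$ with $x+\epsilon(w-x)$ and $y+\epsilon(w-y)$ for a relative interior point $w$. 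Use lower semicontinuity at $z$, and the already known paraconvexity to control $f$ at the shifted endpoints.
\end{itemize}

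\textbf{The case (ii)$\Rightarrow$(i).} The same circular appeal to Lipschitzness appears in your interior case, but there it is harmless, because your proximal device works at every $z$. Take $z_k\in\operatorname{argmin}\{f+\tfrac{k}{2}\|\cdot-z\|^2\}$; it exists thanks to the minorant that (ii) provides. Then $u_k=k(z-z_k)\in\hat\partial f(z_k)$, and the residual $\langle u_k,z-z_k\rangle=k\|z-z_k\|^2\ge 0$ has the favourable sign. The resulting inequality also forces $z_k\to z$ and $f(z)<\infty$. This settles the case $z\notin\operatorname{dom} f$, which your sketch omits.
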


The preceding theorem implies immediately that if the function $f$ is of class $C^1$ and its derivative $\nabla f$ is continuously $(\nu,L)$-Hölderian on a closed convex set $C\subseteq \mathbb{R}^n,$ for some $\nu \in (0, 1]$, $L>0$, i.e.,
\begin{equation}\label{holder-deriv}
\|\nabla f(x)-\nabla f(y)\|\le L\|x-y\|^\nu\;\;\forall x,y\in C,
\end{equation}
then $f$ is $(2L(1+\nu), 1+\nu)$-paraconvex on $C$. The subdifferential characterization above also implies the paraconvexity of the class of composite functions $f = h \circ c$, with $h$ convex, globally Lipschitz and $c$ a smooth map with Hölder-continuous Jacobian.

\begin{proposition}\label{propo:composite_paraconvexity}
    Let $h: \R^n \to R$ a convex and $L_h$-Lipschitz function with $L_h > 0$; $g: \R^m \to \R^n$ be a smooth mapping with $(\nu, L_c)$-Hölder continuous Jacobian where $\nu \in (0, 1]$ and $L_c > 0$. Then the composite function $f = h \circ c$ is $(\rho, 1+\nu)$-paraconvex with $\rho = \frac{2^{1-\nu} L_hL_c}{1+\nu}$.
\end{proposition}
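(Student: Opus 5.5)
The plan is to go through the subdifferential characterization in \cref{charac-thm}. I would first establish an inequality of type (ii) for $f=h\circ c$, and then use the implication ``(ii) $\Rightarrow$ $(\rho 2^{2-p},p)$-paraconvexity'' with $p=1+\nu$. The factor $2^{2-p}=2^{1-\nu}$ it produces is exactly the factor in the claimed constant. (The statement writes the map as $g$ but the composition as $h\circ c$; I write $c$ throughout.)

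\textbf{Step 1: subdifferential of the composition.} Since $h$ is convex and Lipschitz and $c$ is $C^1$, the chain rule for a convex outer function composed with a smooth map gives
\begin{equation*}
\partial f(x)=\nabla c(x)^{\top}\partial h(c(x)).
\end{equation*}
This holds for the Fr\'echet, limiting and Clarke subdifferentials alike, and $f$ is regular. So every $u\in\partial f(x)$ has the form $u=\nabla c(x)^{\top}w$ with $w\in\partial h(c(x))$ and $\|w\|\le L_h$.

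\textbf{Step 2: H\"older Taylor remainder.} Writing $c(y)-c(x)-\nabla c(x)(y-x)=\int_0^1(\nabla c(x+s(y-x))-\nabla c(x))(y-x)\,ds$ and using the $(\nu,L_c)$-H\"older continuity of $\nabla c$ gives
\begin{equation*}
\|c(y)-c(x)-\nabla c(x)(y-x)\|\le \tfrac{L_c}{1+\nu}\|y-x\|^{1+\nu}.
\end{equation*}

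\textbf{Step 3: the inequality of type (ii).} By convexity of $h$, the Lipschitz property of $h$, and Step 2,
\begin{align*}
f(y)&\ge h\big(c(x)+\nabla c(x)(y-x)\big)-\tfrac{L_hL_c}{1+\nu}\|y-x\|^{1+\nu}\\
&\ge f(x)+\langle u,y-x\rangle-\tfrac{L_hL_c}{1+\nu}\|y-x\|^{1+\nu}.
\end{align*}
Thus $f$ satisfies a subgradient inequality of type (ii) whose penalty coefficient is $\frac{L_hL_c}{1+\nu}$.

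\textbf{Step 4: transfer to paraconvexity (main obstacle).} This is the step I expect to be the real difficulty: matching the coefficient from Step 3 to the normalization $\frac{\rho}{p}$ in \cref{charac-ineq1}. If Step 3 is read as (ii) with parameter $\rho_0=\frac{L_hL_c}{1+\nu}$ and \cref{charac-thm} is applied, the result is the stated constant $\rho=2^{1-\nu}\rho_0=\frac{2^{1-\nu}L_hL_c}{1+\nu}$. Under the literal $\frac{\rho}{p}$ convention, however, Step 3 gives (ii) with $\rho_0=(1+\nu)\cdot\frac{L_hL_c}{1+\nu}=L_hL_c$, which yields only $\rho=2^{1-\nu}L_hL_c$. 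That is larger than the claim by a factor $1+\nu$.

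I also tried to avoid \cref{charac-thm} with a direct argument. Set $z=tx+(1-t)y$, expand $c(x)$ and $c(y)$ around $z$ (Step 2), use convexity of $h$, and use the bound $t^\nu+(1-t)^\nu\le 2^{1-\nu}$. This again gives coefficient $\frac{2^{1-\nu}L_hL_c}{1+\nu}$ in front of $t(1-t)\|x-y\|^p$, so it recovers $\frac{\rho}{p}$ with the extra factor $p$ rather than the stated constant.

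Moreover, this gap cannot be closed by a cleverer argument, because the literal claim fails. Take $n=m=1$, $h=\mathrm{id}$ (so $L_h=1$), and $c(x)=-x^2/2$, so $\nu=1$ and $L_c=1$. With $x=1$, $y=-1$, $t=\tfrac12$, we get $f(z)=f(0)=0$ and $tf(x)+(1-t)f(y)=-\tfrac12$. The claimed constant gives $\rho=\tfrac12$, so the right-hand side of \cref{ineq-paraconvex1} is $-\tfrac12+\tfrac{\rho}{2}\cdot\tfrac14\cdot 4=-\tfrac14<0=f(z)$. The inequality is violated.

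My conclusion is therefore the following. Steps 1--3 followed by \cref{charac-thm} prove the stated constant $\rho=\frac{2^{1-\nu}L_hL_c}{1+\nu}$ only if the penalty coefficient in \cref{charac-ineq1} and \cref{ineq-paraconvex1} is read as $\rho$ rather than $\frac{\rho}{p}$. Under the $\frac{\rho}{p}$ normalization of \cref{def:paraconvex}, the same route proves $(2^{1-\nu}L_hL_c,1+\nu)$-paraconvexity, and the example above shows that the stated constant itself cannot hold as written.
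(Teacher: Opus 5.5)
Your Steps 1--3 are the paper's own proof: the chain rule $u=\nabla c(x)^{\top}w$ with $\|w\|\le L_h$, the H\"older Taylor bound, and the subgradient inequality with penalty $\frac{L_hL_c}{1+\nu}\|y-x\|^{1+\nu}$; the paper then invokes \cref{charac-thm}~(ii) but treats $\frac{L_hL_c}{1+\nu}$ as the parameter $\rho$ rather than as $\rho/p$, which is exactly the normalization slip you point out. Your diagnosis is right: matching $\frac{\rho_0}{p}=\frac{L_hL_c}{1+\nu}$ gives $\rho_0=L_hL_c$ and hence only $(2^{1-\nu}L_hL_c,1+\nu)$-paraconvexity, and your counterexample $f(x)=-x^2/2$ is valid, since $(\rho,2)$-paraconvexity of this $f$ holds if and only if $\rho\ge 1$, so the stated $\rho=\frac{1}{2}$ fails while your corrected constant $2^{1-\nu}L_hL_c=1$ is attained.
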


\begin{proof}
    For any $x \in \R^m$, let $J(x) \in \R^{n \times m}$ be the Jacobian of $c$ at $x$. The Hölder continuity of $J$ \cite{yashtini_global_2016} implies:
    \begin{align*}
        \norm{c(x) - c(y) - J(x) (y-x) } \leq \frac{L_c}{1+\nu} \norm[1+\nu]{x-y} ~ \forall x, y \in \R^m.
    \end{align*}

    Let $g \in \partial f(x)$. By the chain rule, there exists $v \in \partial h(c(x))$ such that $g = J(x)^T v$. The convexity and the Lipschitzness of $h$ imply:
    \begin{align*}
        f(y) - f(x) - \langle g, y-x \rangle
        & = h(c(y)) - h(c(x)) - \langle J(x)^T v, y-x \rangle \\
        & \geq \langle v, c(y) - c(x) - J(x)^T (y-x) \rangle \\
        & \geq - \norm{v} \norm{ c(y)-c(x)-J(x)(y-x) } \\
        & \geq -\frac{L_h L_c}{1+\nu} \norm[1+\nu]{x-y}.
    \end{align*}

    The claim follows from \cref{charac-thm}~(ii).
\end{proof}

\begin{example}\label[example]{example:composite_paraconvex}
    The robust phase retrieval problem \cite{davis_stochastic_2019,duchi_stochastic_2018} seeks to recover the true signal from perturbed measurements by solving the following weakly-convex optimization problem:
    \begin{align*}
        \min_{x \in \R^n} \frac{1}{m} \sum_{i=1}^m \abs{ \langle a_i, x \rangle^2 - b_i },
    \end{align*}
    with Gaussian channels $a_i \in \R^n$ and noisy measurements $b_i \in \R$. In cases where the channels and the perturbed noises are heavy-tailed, one can consider the following alternative formulation:
    \begin{align*}
        \min_{x \in \R^n} \frac{1}{m} \sum_{i=1}^m \abs{ \abs{\langle a_i, x \rangle}^p - b_i }.
    \end{align*}

    A simple argument show that the function $x \mapsto \abs{ \langle a, x \rangle }^p$ is $(\nu, L)$-Hölder smooth over $\R^n$ with $\nu=p-1$ and $L = p 2^{2-p} \norm[p]{a} $. Thus, paraconvexity of the objective function above follows from \cref{propo:composite_paraconvexity}. Numerical experiments demonstrating the advantage of using the paraconvex objective in terms of recovering the true signal in this heavy-tail setting are given in \cref{sec:experiment}.
\end{example}

We also consider a more general class of functions called $(\rho, \eta, p)$-paraconvex. This class of functions represents, for example, functions that are the sum of paraconvex and weakly convex functions. The formal definition is given below.

\begin{definition}\label{def:paraweak}
    A proper function $f: \R^n \to \R \cup \set{+\infty}$ is called $(\rho, \eta, p)$-paraconvex for $\rho > 0, \eta > 0$ and $p \in (1, 2]$ if $x \mapsto f(x) + \frac{\eta}{2} \norm[2]{x}$ is $(\rho, p)$-paraconvex. Equivalently, for all $x, y \in \R^n$, $t \in [0, 1]$, the following holds:
    \begin{equation}\label{eq:para-weak-convexity}
        f ( tx + (1-t)y ) \leq t f(x) + (1-t) f(y) + \frac{\rho t(1-t)}{p} \norm[p]{x-y} + \frac{\eta t(1-t)}{2} \norm[2]{x-y}.
    \end{equation}
\end{definition}

It is obvious that the class of $(\rho, \eta, p)$-paraconvex functions preserves all the nice variational properties of $(\rho, p)$-paraconvex functions in \cref{propo:composite_paraconvexity}. A $(\rho, \eta, 2)$-paraconvex function is then $(\rho+\eta)$-weakly convex. The following subdifferential characterization is also straightforward from the subdifferential characterization of paraconvex functions in \cref{charac-thm} and weakly convex functions \cite{davis_stochastic_2019,vial_strong_1983}.

\begin{lemma}\label{lem:charac_paraweak}
    Let $f:\R^n\rightarrow\R\cup\{+\infty\} $ be a proper l.s.c. function. For $p>1,$ $\rho>0,$ consider the three statements:
    \begin{itemize}
    \item[(i)] The function $f$ is $(\rho, \eta, p)$-paraconvex.
    \item[(ii)] For all $x,y\in \R^n,$ $u\in\partial f(x),$ one has
    \begin{equation}\label{charac-paraweak-ineq1}
    \langle u,y-x\rangle \le f(y)-f(x)+\frac{\rho}{p}\|x-y\|^p + \frac{\eta}{2} \norm[2]{x-y} .
    \end{equation}
    \item[(iii)] For all $x,y\in\R^n;$ all $u\in\partial f(x),$ $v\in \partial f(y),$ one has
    \begin{equation}\label{charac-paraweak-ineq2}
    \langle u-v,x-y\rangle\ge -\frac{2\rho}{p}\|x-y\|^p - \eta \norm[2]{x-y} .
    \end{equation}
    \end{itemize}
    One has $(i)\Rightarrow (ii)\Rightarrow (iii).$ Moreover, if (ii) holds then $f$ is $(\rho2^{2-p}, \eta, p)$-paraconvex; while (iii) holds then $f$ is $(2\rho, \eta, p)$-paraconvex.
\end{lemma}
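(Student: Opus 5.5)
The plan is to reduce everything to \cref{charac-thm} applied to the auxiliary function $g(x) \coloneqq f(x) + \frac{\eta}{2}\norm[2]{x}$, which is $(\rho,p)$-paraconvex exactly when $f$ is $(\rho,\eta,p)$-paraconvex, by \cref{def:paraweak}. Since the quadratic $q(x)=\frac{\eta}{2}\norm[2]{x}$ is $C^1$ and hence locally Lipschitz, the sum rule (applied in both directions, writing $f = g + (-q)$ with $-q$ also smooth) gives $\partial g(x) = \partial f(x) + \eta x$. Thus $u \in \partial f(x)$ if and only if $u + \eta x \in \partial g(x)$. Throughout I will use the identity
\begin{align*}
\frac{\eta}{2}\norm[2]{y} - \frac{\eta}{2}\norm[2]{x} - \langle \eta x, y-x\rangle = \frac{\eta}{2}\norm[2]{x-y}
\end{align*}
together with its symmetrized form $\langle \eta x - \eta y, x-y\rangle = \eta\norm[2]{x-y}$.

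For (i)$\Rightarrow$(ii), I apply \cref{charac-thm}~(i)$\Rightarrow$(ii) to $g$ with the subgradient $u+\eta x$. The resulting inequality is
\begin{align*}
\langle u+\eta x, y-x\rangle \le g(y)-g(x)+\frac{\rho}{p}\norm[p]{x-y},
\end{align*}
and expanding $g$ and using the identity above yields \cref{charac-paraweak-ineq1}. Conversely, \cref{charac-paraweak-ineq1} for $f$ is, by the same computation read backwards, exactly statement (ii) of \cref{charac-thm} for $g$.

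For (ii)$\Rightarrow$(iii), I can either add \cref{charac-paraweak-ineq1} to its copy with $x,y$ swapped, or pass through $g$ and apply \cref{charac-thm}~(ii)$\Rightarrow$(iii), then subtract $\eta\norm[2]{x-y}$. Either way the result is \cref{charac-paraweak-ineq2}. In the same manner, \cref{charac-paraweak-ineq2} for $f$ is equivalent to statement (iii) of \cref{charac-thm} for $g$.

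The converse constants then follow directly. If (ii) holds, then $g$ satisfies (ii) of \cref{charac-thm}, so $g$ is $(\rho 2^{2-p},p)$-paraconvex, which means $f$ is $(\rho2^{2-p},\eta,p)$-paraconvex. If (iii) holds, $g$ is $(2\rho,p)$-paraconvex, which means $f$ is $(2\rho,\eta,p)$-paraconvex.

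The only delicate point is the subdifferential translation $\partial g = \partial f + \eta\,\mathrm{id}$ for a merely l.s.c. $f$. I would justify it by the exact sum rule for a smooth perturbation: for Fréchet subgradients this is immediate from the definition, and it passes to the limiting subdifferential because $q$ is continuous, so $g(x_k)\to g(x)$ if and only if $f(x_k)\to f(x)$. A second minor check is that \cref{charac-thm} is stated for $p\in(1,2]$; the lemma says $p>1$, but by \cref{def:paraweak} the relevant range is $p\in(1,2]$, so the theorem applies.
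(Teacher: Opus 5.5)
Your proof is correct and matches the paper's route. The paper gives no detailed argument and only calls the lemma straightforward from \cref{charac-thm} (and the weakly convex case); applying \cref{charac-thm} to $g = f + \frac{\eta}{2}\|\cdot\|^2$, using $\partial g = \partial f + \eta\,\mathrm{id}$ and the quadratic identities, is exactly that derivation, and your caveat that $p$ must lie in $(1,2]$ for \cref{charac-thm} to apply rightly tightens the statement's ``$p>1$''.
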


\subsection{Generalized Moreau--Yosida regularization and stationarity measure}\label{sec:generalized_moreau}

The main goal in nonconvex optimization is the search for stationary points; monitoring an algorithm's progress toward stationarity therefore requires an appropriate \emph{stationarity measure}. In the smooth setting, a natural choice is the magnitude of the gradients or the generalized gradient mapping \cite{ghadimi_stochastic_2013} along the iterates. In the more general nonsmooth, weakly convex setting, Davis and Drusvyatskiy \cite{davis_stochastic_2019} pioneer the use of the gradient norm of the Moreau envelope \cite{Moreau1965} as a \emph{near-stationarity measure}: a small value certifies that the current iterate lies close to an approximate stationary point. A key property underlying this approach is that, for a suitably chosen regularization parameter, the Moreau envelope of a weakly convex function is $C^1$-smooth. Such a property does not hold for paraconvex functions. In this section, we introduce the \emph{generalized Moreau-Yoshida regularization}, its associated generalized proximal mapping, together with some of their key variational properties needed to define a near-stationarity measure for paraconvex functions. These tools have been studied by J.-P. Penot in the late $1990$s \cite{Penot1996Favorable,penot98}, and subsequently developed by Bernard and Thibault \cite{BernardThibault2004ProxRegularBanach,BernardThibault2005ProxRegularHilbert,BernardThibault2005UniformProxRegular}, by Jourani-Thibault-Zagrodny \cite{JouraniThibaultZagrodny2014}, by Ngai and Penot \cite{van_ngai_subdifferentiation_2016}, to name a few.

\begin{definition}\label{def:gen_moreau_2}
    For a proper l.s.c. function $f:\R^n\to\R\cup\{+\infty\}$, with {$r>0$}, an increasing continuous function $h: \R_+ \to \R_+$ such that $h(0) = 0$ and $h(t) \to +\infty$ as $t \to +\infty$, define
    \begin{equation*}\label{MR_1}
    \moreaup[r][f]{x} \coloneqq \inf_{y \in \R^n} \left\{f(y)+\frac{1}{r} h (\norm{x-y}) \right\}
    \end{equation*}
    as the generalized Moreau--Yosida regularization of $f$ with respect to the regularizing kernel $h(\norm{x-y})$. The corresponding generalized proximal mapping is defined as follows:
    \begin{equation*}\label{Proj_1}
    \gproxp[r][f]{x} \coloneqq \left\{ y\in\R^n:\;f(y)+\frac{1}{r} h(\norm{x-y}) = M^{ r}_f(x) \right\}.
    \end{equation*}
\end{definition}


The following results are from \cite{van_ngai_subdifferentiation_2016}.

\begin{theorem}\label{thm:generalized_moreau_paraweak}
    Let $f: \R^n \rightarrow \R^n \cup \{+\infty\}$ be a proper l.s.c. function; $h: \R_+ \to \R_+$ be an increasing continuous and locally Lipschitz function such that $h(0)=0$ and $h(t) \to +\infty$ as $t \to +\infty$. We have the following statements:
    \begin{itemize}
    \item[(i)] Assume that $f$ satisfies the growth condition:
    \begin{equation}\label{eq:growth_cond_1}
        f(x) \geq b - a h( \norm{x} ),~ \text{for some} ~ a > 0 ~ \text{and} ~ b \in \R, ~ \forall x \in \R^n \tag{G},
    \end{equation}
    and $h$ satisfies the following condition:
    \begin{equation}\label{eq:cond_h}
        \exists ~ c\ge 1 ~ \text{s.t. for any}~ d > 0, \exists ~ m > 0 ~ \text{s.t.} ~
        h(t+d) \leq ch(t)+m ~ \forall ~ t \in \R_+.\tag{H}
    \end{equation}
    Then for $0<r<(ac)^{-1},$ the function $M^{r}_f$ is locally Lipschitz, and  $P^{r}_f(x)\neq \emptyset$ for all $x\in\R^n$. Moreover, if $P^{r}_f(x)$ is a singleton then $M^{r}_f(x)$ is Fréchet differentiable at $x$.
    \item[(ii)] For $x\in\R^n,$ if $h$ satisfies the condition \cref{eq:cond_h} and is differentiable with $h'(0)=0$, $h'(t)>0$ for $t>0$, and $ P^{r}_f(x) $ is nonempty, then for all $z \in P^{r}_f(x) $,
    \begin{equation}\label{eq:subdif-regu-1}
    \hat\partial M^{r}_f(x) \subseteq \hat\partial f(z) \cap \set{ r^{-1} \nabla j_h (x-z)},
    \end{equation}
    where $j_h \coloneqq h \circ ||.||$. As a result, if $f$ satisfies the growth condition \cref{eq:growth_cond_1}, then for all $0<r<(ac)^{-1},$ either $\hat\partial M^{r}_f(x) $ is empty or it is singleton. Furthermore, when $\hat\partial M^{r}_f(x) $ is a singleton, the function $M^{r}_f$ is Fréchet differentiable at $x$.
    
    \item[(iii)] Assume $f$ satisfies the growth condition \cref{eq:growth_cond_1}. For $0<r<(ac)^{-1}$, and $h: \R_+ \to \R_+$ satisfies the assumptions in statement (ii), for all $x\in \R^n,$ the limiting subdifferential $\partial M^{r}_f (x) $ is nonempty, and there exists $z\in P^{r}_f (x) $ such that
    \begin{align}\label{eq:stationary_measure_1}
    d(0,\partial f(z)) \le d \left(0,\partial M^{r}_f(x) \right) &\coloneqq\inf\{\|v\|:\;\; v\in\partial M^{r}_f (x) \} \\
    &=r^{-1} \norm{ \nabla j_h (x-z) } \label{eq:limit-subdif-estim-1} \notag .
    \end{align}
    \end{itemize}
\end{theorem}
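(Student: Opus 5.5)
The three parts rest on the coercivity of the regularized objective and on variational analysis at a minimizer, and I would prove them in order.

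\textbf{Part (i).} Fix $x$ and set $\Phi_x(y)=f(y)+r^{-1}h(\norm{x-y})$. I first show $\Phi_x$ is bounded below and coercive. Since $\norm{y}\le\norm{x-y}+\norm{x}$, \cref{eq:cond_h} with $d=\norm{x}$ gives $h(\norm y)\le c\,h(\norm{x-y})+m$. Combined with \cref{eq:growth_cond_1},
\begin{align*}
\Phi_x(y)\ge b-am+(r^{-1}-ac)\,h(\norm{x-y}),
\end{align*}
and $r^{-1}-ac>0$ when $r<(ac)^{-1}$. As $h(t)\to+\infty$, the sublevel sets of $\Phi_x$ are bounded. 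Lower semicontinuity then gives a minimizer, so $P^r_f(x)\neq\emptyset$. The bound above depends on $x$ only through $m$, so it is locally uniform in $x$. Hence for $x'$ near $x$ the minimizers lie in a common bounded set $K$. On that set, $h$ is Lipschitz on a compact interval, so $M^r_f(x')\le f(z)+r^{-1}h(\norm{x'-z})$ for $z\in P^r_f(x)$, and symmetrizing gives $|M^r_f(x)-M^r_f(x')|\le r^{-1}L_K\norm{x-x'}$.

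For Fréchet differentiability when $P^r_f(x)=\{z\}$, the same inclusion supplies the upper estimate $M(x')-M(x)\le r^{-1}[j_h(x'-z)-j_h(x-z)]$. Here $j_h$ is differentiable, given the hypotheses of (ii). For the matching lower estimate I take minimizers $z'$ of $x'$; these converge to $z$ by uniqueness and compactness. The difference quotient is then sandwiched using the continuity of $\nabla j_h$ along $x'-z'$. I expect this to be the delicate step. It would go through the standard argument that a marginal function is differentiable when its solution set is a singleton.

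\textbf{Part (ii).} Take $v\in\hat\partial M(x)$ and $z\in P(x)$. The key inequality is
\begin{align*}
M(x')\le f(z')+r^{-1}h(\norm{x'-z'})\quad\text{for all }x',z'.
\end{align*}
First I take $z'=z$ and vary $x'$. The function $x'\mapsto r^{-1}j_h(x'-z)-M(x')$ then has a local minimum-type inequality at $x$. Since $j_h$ is differentiable (its gradient at $0$ is $0$ because $h'(0)=0$), this forces $v=r^{-1}\nabla j_h(x-z)$. Next I take $x'=x+(z'-z)$, so the kernel term is constant. This gives $f(z')-f(z)\ge M(x+z'-z)-M(x)\ge\langle v,z'-z\rangle+o(\norm{z'-z})$, i.e.\ $v\in\hat\partial f(z)$. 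The singleton claim follows immediately, and if the set is a singleton, differentiability follows as in (i).

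\textbf{Part (iii).} Since $M$ is locally Lipschitz, $\partial M(x)\neq\emptyset$. Take $v\in\partial M(x)$ attaining the distance; it exists because the set is closed. Write $v=\lim v_k$ with $v_k\in\hat\partial M(x_k)$ and $x_k\to x$. By (ii), $v_k=r^{-1}\nabla j_h(x_k-z_k)\in\hat\partial f(z_k)$ for some $z_k\in P(x_k)$. The $z_k$ are bounded by the uniform coercivity, so along a subsequence $z_k\to z$. Lower semicontinuity of $f$ together with continuity of $M$ gives $z\in P(x)$ and $f(z_k)\to f(z)$. Continuity of $\nabla j_h$ then yields $v=r^{-1}\nabla j_h(x-z)\in\partial f(z)$. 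This gives both the inequality $d(0,\partial f(z))\le\norm v$ and the stated identity.
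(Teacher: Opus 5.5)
The paper does not prove this theorem; it quotes it from \cite{van_ngai_subdifferentiation_2016}. Your proposal is therefore a self-contained replacement, and it follows the standard route: uniform coercivity from (G) and (H), a Danskin-type sandwich for $M^r_f$, the two test choices $z'=z$ and $x'-z'=x-z$ for the Fr\'echet inclusion, and a limiting argument via closedness of the argmin map. The existence and local Lipschitz parts of (i) and the inclusion in (ii) are correct. To confine the minimizers of all $x'$ near $x$ to one bounded set, you also need the uniform upper bound $M^r_f(x')\le f(y_0)+r^{-1}h(\|x'-y_0\|)$ for a fixed $y_0$ with $f(y_0)<+\infty$. You should also run the Lipschitz estimate for arbitrary pairs $x',x''$ near $x$, not only against $x$. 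Both are routine. Two points need repair.

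\textbf{Continuity of $h'$.} You are right to import differentiability of $j_h$ into the last claim of (i); without it the claim is false ($f=\iota_{\{0\}}$, $h(t)=t$ gives $P^r_f(0)=\{0\}$ but $M^r_f=r^{-1}\|\cdot\|$). However, your lower estimate in (i) and your identification $v=r^{-1}\nabla j_h(x-z)$ in (iii) both use continuity of $\nabla j_h$, i.e.\ $h\in C^1$. Statement (ii) only assumes $h$ differentiable, and this gap cannot be closed.

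Here is a counterexample. Take $n=1$, $u_0>0$, and
$h(t)=t^2/2+\epsilon\,\chi(t-u_0)(t-u_0)^2\sin\frac{1}{t-u_0}$,
with $\chi$ a smooth cutoff equal to $1$ near $0$ and supported in $(-u_0/2,u_0/2)$, and $\epsilon$ small. This $h$ meets every hypothesis of (ii) and $h'(u_0)=u_0$. But $h'(s_k)=s_k-\epsilon$ at $s_k=u_0+\frac{1}{2\pi k}$, so for large $k$ we have $h'\le u_0-\epsilon/2$ on intervals $[s_k,s_k+\delta_k]$ with $\delta_k\downarrow 0$. Let $f(-u_0)=0$, $f(-s_k)=r^{-1}\bigl(h(u_0)-h(s_k)\bigr)+\frac{\epsilon}{4r}\delta_k$, and $f=+\infty$ elsewhere. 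This $f$ is l.s.c.\ and bounded below, so (G) holds with $a$ as small as you like. Then:
\begin{itemize}
\item $P^r_f(0)=\{-u_0\}$;
\item $M^r_f(\delta_k)-M^r_f(0)\le r^{-1}(u_0-\epsilon/4)\delta_k$;
\item $M^r_f(-\delta)-M^r_f(0)\le -r^{-1}u_0\delta+o(\delta)$.
\end{itemize}
Hence $M^r_f$ is not differentiable at $0$. State $h\in C^1$ explicitly. It holds for the kernels $t^p/p$ and $t^p/p+t^2/2$ that the paper actually uses.

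\textbf{The last claim of (ii).} Saying differentiability ``follows as in (i)'' does not work, because $\hat\partial M^r_f(x)=\{v\}$ does not make $P^r_f(x)$ a singleton. It only forces $\nabla j_h(x-z)=rv$ for every $z\in P^r_f(x)$. Since $h'$ need not be injective (e.g.\ $h(t)=t^2/2+2(1-\cos t)$), two points $x-t_1e$ and $x-t_2e$ with $h'(t_1)=h'(t_2)$ can both be proximal points.

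You do not need (i) here. The subgradient inequality gives $M^r_f(x')-M^r_f(x)\ge\langle v,x'-x\rangle+o(\|x'-x\|)$. Your first test, with any $z\in P^r_f(x)$, gives $M^r_f(x')-M^r_f(x)\le r^{-1}\bigl(j_h(x'-z)-j_h(x-z)\bigr)=\langle v,x'-x\rangle+o(\|x'-x\|)$, because $v=r^{-1}\nabla j_h(x-z)$. This uses neither uniqueness of the proximal point nor continuity of $\nabla j_h$. With these repairs, and assuming $h\in C^1$, your argument for (iii) goes through as written.
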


For a $(\rho, p)$-paraconvex function $f$, with $h(t) = t^p/p$, it is easy to see that the growth condition \cref{eq:growth_cond_1} is satisfied with $a = \rho$. Similarly, a $(\rho, \eta, p)$-paraconvex function satisfies \cref{eq:growth_cond_1} with $h(t) = t^p/p + t^2/2$ and $a=\rho+\eta$. Hence, \cref{thm:generalized_moreau_paraweak}~(iii) suggests that, with a sufficiently small regularization parameter and a suitable kernel $h$, the distance from $0$ to the limiting subdifferential of the generalized Moreau envelope can be interpreted as a near-stationarity measure for paraconvex functions. Therefore, convergence guarantees in subsequent sections will be presented in terms of this measure.

\section{Stochastic subgradient methods for unconstrained paraconvex optimization}\label{sec:ssgd}
We will begin by considering the following unconstrained optimization problem:
\begin{equation}\label{eq:para-prob}
\min_{x\in \R^n} F(x),
\end{equation}
where $F:\R^n\to \R$ is a real-valued l.s.c. and $(\rho, p)-$paraconvex function with $p \in (1, 2]$ and $\rho > 0$. We assume that the only access to $F$ is through a stochastic gradient oracle satisfying the assumption below:

\begin{assumption}\label[assumption]{assump:stochastic_grad_oracle}
Fix a probability space $\left( \Omega, \mathcal{F}, \pi \right)$, and equip $\R^n$ with the Borel $\sigma$-algebra. We make the following assumptions:
\begin{itemize}
\item[(i)] We can draw i.i.d. realizations $\xi_1, \xi_2, \dots$ from $\pi$.
\item[(ii)] There exists a measurable mapping $g:\R^n \times \Omega\rightarrow\R^n$ such that\\
$\mathbb{E}_{\xi\sim \pi} g(x,\xi)\in\partial F(x),$ for all $x\in\R^n$.
\item[(iii)] There exists a constant $L>0$ such that $\mathbb{E}_{\xi\sim P} \|g(x,\xi)\|^p\le L^p,$ for all $x\in\R^n$.
\end{itemize}
\end{assumption}

\Cref{assump:stochastic_grad_oracle}~(i) is standard in the literature of stochastic optimization, while \cref{assump:stochastic_grad_oracle}~(ii) and \cref{assump:stochastic_grad_oracle}~(iii) are weaker than the standard bounded-second moment assumption for stochastic weakly convex optimization \cite[Assumption A3]{davis_stochastic_2019}. In this section, we will study the stochastic subgradient method \cref{alg:ssgd} for solving problem \cref{eq:para-prob}.

\begin{algorithm}
\caption{Stochastic subgradient method}
\label{alg:ssgd}
\begin{algorithmic}[1]
\REQUIRE{Initial model $x_0$, step size sequence $\{\gamma_t\}_{t \in \N} \subset \R^+$}
\FOR{$t \in [0, T]$}
\STATE{Draw $\xi_t \overset{i.i.d.}{\sim} \pi$}
\STATE{Update $x_{t+1} = x_t - \gamma_t g(x_t, \xi_t)$}
\ENDFOR
\STATE{Sample $t^*$ according to $\mathbb{P}(t^*=t) = \gamma_t/\sum_{t'=0}^T \gamma_{t'}$}
\RETURN $x_{t^*}$
\end{algorithmic}
\end{algorithm}

The following basic inequalities will be used frequently in the analysis.

\begin{fact}\label{useful-ineq} For $0<\alpha\le 1,$ one has:
\begin{itemize}
    \item[(i)] (Bernoulli's inequality) For all $a>0$ and $b\ge -a,$
    \begin{equation*}\label{eq:useful_ineq_1}
        (a+b)^\alpha\le a^\alpha+\alpha ba^{\alpha-1}.
    \end{equation*}
    \item[(ii)] (Subadditivity of concave function) For all $a>0,b>0,$
    \begin{equation*}\label{eq:useful_ineq_2}
        (a+b)^\alpha\le a^\alpha+b^\alpha.
    \end{equation*}
\end{itemize}
\end{fact}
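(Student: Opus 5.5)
Both inequalities follow from the concavity of $s \mapsto s^\alpha$ on $(0,\infty)$ for $0<\alpha\le 1$. Part (i) comes from the tangent-line bound. Part (ii) comes from comparing $a^\alpha+b^\alpha$ with $(a+b)^\alpha$ through normalized variables.

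For (i), set $\phi(s)=s^\alpha$ on $[0,\infty)$. Since $\phi''(s)=\alpha(\alpha-1)s^{\alpha-2}\le 0$ for $s>0$, $\phi$ is concave on $(0,\infty)$ and continuous on $[0,\infty)$. A concave differentiable function lies below its tangent lines. At the point $a>0$ this gives, for every $s\ge 0$,
\begin{equation*}
\phi(s)\le \phi(a)+\phi'(a)(s-a)=a^\alpha+\alpha a^{\alpha-1}(s-a).
\end{equation*}
Take $s=a+b$; the hypothesis $b\ge -a$ guarantees exactly that $s\ge 0$, so that $\phi(s)$ is defined. The case $s=0$ is covered either by continuity or by a direct check: it reads $0\le a^\alpha-\alpha a^\alpha=(1-\alpha)a^\alpha$. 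Substituting $s=a+b$ yields $(a+b)^\alpha\le a^\alpha+\alpha b a^{\alpha-1}$.

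For (ii), put $\lambda=a/(a+b)\in(0,1)$, so that $1-\lambda=b/(a+b)$. Because $0<\alpha\le 1$ and $\lambda,1-\lambda\in(0,1)$, we have $\lambda^\alpha\ge\lambda$ and $(1-\lambda)^\alpha\ge 1-\lambda$. Adding these gives $\lambda^\alpha+(1-\lambda)^\alpha\ge 1$. Multiplying through by $(a+b)^\alpha$ gives $a^\alpha+b^\alpha\ge (a+b)^\alpha$.

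Neither step poses a real obstacle. The only point needing care is the boundary case $a+b=0$ in (i), which is handled as above.
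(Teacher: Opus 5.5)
Your proof is correct: the tangent-line bound for the concave map $s\mapsto s^\alpha$ at $a>0$ gives (i), including the boundary case $a+b=0$. The normalization $\lambda=a/(a+b)$ together with $\lambda^\alpha\ge\lambda$ and $(1-\lambda)^\alpha\ge 1-\lambda$ gives (ii). The paper states this as a standard fact without proof, and your argument is the usual justification.
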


The following inequalities will also be useful to derive the convergence rates.
\begin{fact}\label{fact:sum_step_size} For $f: [a, b+1] \to \R_{+}$ non-increasing, the following holds:
    \begin{equation*}
        \int_{a}^{b+1} f(x) dx \leq \sum_{x=a}^b f(x) \leq f(a) + \int_{a}^b f(x) dx.
    \end{equation*}

    Specially, for $\beta > 0$, one has:
    \begin{itemize}
        \item[(i)] If $\beta > 1$, then
        \begin{equation*}
            \sum_{t=0}^T (t+1)^{-\beta} \leq \frac{\beta}{\beta-1}.
        \end{equation*}
        \item[(ii)] If $\beta = 1$, then
        \begin{equation*}
            \sum_{t=0}^T (t+1)^{-1} \leq \ln(T+1) + 1.
        \end{equation*}
        \item[(iii)] If $\beta < 1$, then
        \begin{equation*}
            \sum_{t=0}^T (t+1)^{-\beta} \geq \frac{1}{1-\beta} \pa{ (T+2)^{1-\beta} - 1 }
        \end{equation*}
    \end{itemize}
\end{fact}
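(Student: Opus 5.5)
The statement to prove is \cref{fact:sum_step_size}: an integral comparison for non-increasing functions, followed by its three specializations to $f(x)=(x+1)^{-\beta}$. The plan is to prove the general sandwich first and then evaluate the integrals explicitly.

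\textbf{General inequality.} I take $a,b$ to be integers with $a\le b$, since the sum is over integers $x=a,\dots,b$. For each integer $k\in[a,b]$ and each $x\in[k,k+1]$, monotonicity of $f$ gives $f(k+1)\le f(x)\le f(k)$. Integrating over $[k,k+1]$ yields
\begin{equation*}
f(k+1)\le \int_k^{k+1} f(x)\,dx\le f(k).
\end{equation*}
Summing the right-hand inequality over $k=a,\dots,b$ gives the lower bound $\int_a^{b+1} f\le \sum_{k=a}^b f(k)$. Writing $\sum_{k=a}^b f(k)=f(a)+\sum_{k=a}^{b-1}f(k+1)$ and applying the left-hand inequality to each term gives the upper bound $\sum_{k=a}^b f(k)\le f(a)+\int_a^b f$. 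Measurability and integrability of $f$ are automatic, because a monotone function is Riemann integrable on a compact interval.

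\textbf{Specializations.} Take $f(x)=(x+1)^{-\beta}$, which is positive and non-increasing on $[0,T+1]$, with $a=0$ and $b=T$, so that $f(a)=1$.
\begin{itemize}
\item[(i)] For $\beta>1$, the upper bound gives
\begin{equation*}
\sum_{t=0}^T (t+1)^{-\beta}\le 1+\int_0^T (x+1)^{-\beta}\,dx = 1+\frac{1-(T+1)^{1-\beta}}{\beta-1}\le 1+\frac{1}{\beta-1}=\frac{\beta}{\beta-1}.
\end{equation*}
\item[(ii)] For $\beta=1$, the upper bound gives $\sum_{t=0}^T (t+1)^{-1}\le 1+\int_0^T \frac{dx}{x+1}=1+\ln(T+1)$.
\item[(iii)] For $\beta<1$, the lower bound gives
\begin{equation*}
\sum_{t=0}^T (t+1)^{-\beta}\ge \int_0^{T+1}(x+1)^{-\beta}\,dx=\frac{(T+2)^{1-\beta}-1}{1-\beta}.
\end{equation*}
\end{itemize}

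\textbf{Main care point.} There is no real obstacle. The only things to watch are the index shift in the upper bound, where $f(a)$ is separated out before comparing the remaining terms to the integral over $[a,b]$, and dropping the negative term $-(T+1)^{1-\beta}/(\beta-1)$ in case (i), which is legitimate because $\beta>1$.
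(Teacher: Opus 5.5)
Your proof is correct. The paper states this result as a standard fact without proof, and the intended justification is exactly your argument: the unit-interval comparison $f(k+1)\le\int_k^{k+1}f\le f(k)$, summed over $k$ (with $f(a)$ split off for the upper bound), followed by explicitly integrating $(x+1)^{-\beta}$. Your computations in (i)--(iii) are all accurate, including discarding the term $-(T+1)^{1-\beta}/(\beta-1)$ in (i).
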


Henceforth, we denote by $\E[t]{\cdot}$ the conditional expectation given the $\sigma$-algebra generated by the random samples $\xi_0, \xi_1, \dots, \xi_{t-1}$. Following \cref{thm:generalized_moreau_paraweak}, the near-stationarity measure used in this section will involve the generalized Moreau envelope and the generalized proximal mapping with the kernel $h(t) = (1/p)t^p$ for $t \geq 0$. A simple argument shows that $h$ satisfies \cref{eq:cond_h} with $c=2^{p-1}$. Thus, a regularization parameter $ 0 < r < (2^{p-1}\rho)^{-1}$ guarantees that $d\pa{ 0, \moreaup[r]{x} }$ can be used as the near-stationarity measure. To lighten the notation, when clear from context, we may write $g_t$ for $g(x_t,\xi_t)$. We begin with the following descent lemma.

\begin{lemma}\label{lem:unconstrained_descent_lem}
    Let $\{x_t\}$ be the sequence generated by \cref{alg:ssgd}, and for $0 < r < (2^{p-1}\rho)^{-1}$, let $z_t \in \gproxp[r]{x_t}$. Then, under \cref{assump:stochastic_grad_oracle}, one has:
    \begin{equation*}
        \E[t]{ \norm[p]{z_t - x_{t+1}} } \leq \norm[p]{z_t - x_t} - (r^{-1} - \rho) \gamma_t \norm[2(p-1)]{z_t - x_t} + \left( 2^{p-1} p + 1 \right) \gamma_t^p L^p
    \end{equation*}
\end{lemma}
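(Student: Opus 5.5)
The plan is to expand the $p$-th power of $\|z_t-x_{t+1}\|$ around $\|z_t-x_t\|$ by a first-order inequality. The first-order term is then controlled through the paraconvex subgradient inequality and the optimality of $z_t$ in the generalized proximal problem; the remainder is a multiple of $\gamma_t^p\|g_t\|^p$.

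\emph{Step 1 (deterministic inequality).} Set $a=x_t-z_t$ and $b=-\gamma_t g_t$, so that $x_{t+1}-z_t=a+b$. I will aim for
\[
\|a+b\|^p\le \|a\|^p+p\|a\|^{p-2}\langle a,b\rangle+C_p\|b\|^p
\]
with $C_p$ of the order of $2^{p-1}p+1$. If $a=0$ the bound is trivial, so assume $a\neq 0$ and split into two cases.

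\emph{Case $\|b\|\le\|a\|$.} Write $\|a+b\|^p=(\|a\|^2+2\langle a,b\rangle+\|b\|^2)^{p/2}$ and apply \cref{useful-ineq}~(i) with $\alpha=p/2\le 1$, base $\|a\|^2$ and increment $2\langle a,b\rangle+\|b\|^2\ge-\|a\|^2$. This gives the bound $\|a\|^p+p\|a\|^{p-2}\langle a,b\rangle+\tfrac p2\|a\|^{p-2}\|b\|^2$. Since $\|b\|\le\|a\|$ and $p\le 2$, we have $\|a\|^{p-2}\|b\|^2=\|b\|^p(\|b\|/\|a\|)^{2-p}\le\|b\|^p$.

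\emph{Case $\|b\|>\|a\|$.} Here $\|a+b\|^p\le 2^p\|b\|^p$. By Cauchy--Schwarz, $\|a\|^p+p\|a\|^{p-2}\langle a,b\rangle\ge \|a\|^p-p\|a\|^{p-1}\|b\|\ge -p\|b\|^p$. Hence $\|a+b\|^p\le \|a\|^p+p\|a\|^{p-2}\langle a,b\rangle+(2^p+p)\|b\|^p$.

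Tightening the constants to exactly $2^{p-1}p+1$ may require a slightly finer split, for instance comparing against $2\|a\|$ or using \cref{useful-ineq}~(ii). This bookkeeping is the main technical obstacle. Only the structure of the inequality matters for what follows.

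\emph{Step 2 (taking $\mathbb E_t$).} Since $x_t$ and $z_t$ are measurable with respect to $\xi_0,\dots,\xi_{t-1}$, and $v_t:=\mathbb E_t g_t\in\partial F(x_t)$ by \cref{assump:stochastic_grad_oracle}~(ii), we get
\[
\mathbb E_t\|z_t-x_{t+1}\|^p\le\|a\|^p-p\gamma_t\|a\|^{p-2}\langle v_t,x_t-z_t\rangle+C_p\gamma_t^pL^p,
\]
where \cref{assump:stochastic_grad_oracle}~(iii) bounds $\mathbb E_t\|b\|^p\le\gamma_t^pL^p$.

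\emph{Step 3 (lower bound on the inner product).} Apply \cref{charac-thm}~(ii) at $x_t$ with $y=z_t$: $\langle v_t,z_t-x_t\rangle\le F(z_t)-F(x_t)+\frac\rho p\|z_t-x_t\|^p$. Because $z_t\in\gproxp[r]{x_t}$ and $x_t$ is a feasible competitor in the defining infimum, $F(z_t)+\frac1{rp}\|z_t-x_t\|^p\le F(x_t)$. Combining the two gives
\[
\langle v_t,x_t-z_t\rangle\ge\tfrac1p(r^{-1}-\rho)\|x_t-z_t\|^p .
\]
Multiplying by $p\gamma_t\|a\|^{p-2}$ turns the middle term of Step 2 into $-(r^{-1}-\rho)\gamma_t\|z_t-x_t\|^{2(p-1)}$, which is the claimed bound. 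The condition $r<(2^{p-1}\rho)^{-1}$ guarantees $r^{-1}>\rho$ and that $z_t$ exists, by \cref{thm:generalized_moreau_paraweak}.
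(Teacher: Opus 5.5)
Your overall route matches the paper's. You expand $\|z_t-x_{t+1}\|^p=(\|a\|^2+2\langle a,b\rangle+\|b\|^2)^{p/2}$ with a concavity/Bernoulli bound, take $\mathbb{E}_t$, and control the cross term by the paraconvex subgradient inequality at $x_t$ combined with $F(z_t)+\frac{1}{rp}\|z_t-x_t\|^p\le F(x_t)$. Steps 2 and 3 are correct and coincide with the paper.

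The gap is in Step 1. Your split only delivers $C_p=2^p+p$, and $2^p+p-(2^{p-1}p+1)=2^{p-1}(2-p)+(p-1)>0$ for every $p\in(1,2]$. So the lemma as stated, with remainder $(2^{p-1}p+1)\gamma_t^pL^p$, is not proved. This is not just bookkeeping: the constant is part of the statement and is carried into $c_2$ in Theorem~\ref{thm:ssgd}.

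The loss comes from your case $\|b\|>\|a\|$. There you bound $\|a+b\|^p$ by its aligned worst case $2^p\|b\|^p$ and the cross term by its anti-aligned worst case $p\|b\|^p$ simultaneously, but these two worst cases cannot occur together. The refinement you suggest, splitting at $\|b\|=2\|a\|$ with the same crude estimates, gives $(3/2)^p+p2^{1-p}$. That still exceeds $2^{p-1}p+1$ for $p$ near $1$ (e.g.\ $p=1.3$).

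The missing idea is to split on the sign of $\|a\|^2+2\langle a,b\rangle$ rather than on the norms, as the paper does.
\begin{itemize}
\item If $\|a\|^2+2\langle a,b\rangle>0$, subadditivity of $s\mapsto s^{p/2}$ gives $\|a+b\|^p\le(\|a\|^2+2\langle a,b\rangle)^{p/2}+\|b\|^p$. Bernoulli's inequality then yields the remainder $\|b\|^p$.
\item If $\|a\|^2+2\langle a,b\rangle\le 0$, then $\|a+b\|^p\le\|b\|^p$, and Cauchy--Schwarz gives $\|a\|\le 2\|b\|$. Hence $-p\|a\|^{p-2}\langle a,b\rangle\le p\|a\|^{p-1}\|b\|\le 2^{p-1}p\|b\|^p$. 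Adding $\|a\|^p\ge 0$ and this compensating cross term gives $\|a+b\|^p\le\|a\|^p+p\|a\|^{p-2}\langle a,b\rangle+(2^{p-1}p+1)\|b\|^p$.
\end{itemize}
Combining this with your own case $\|b\|\le\|a\|$ would even give the smaller constant $1+p$. With Step 1 replaced in this way, your Steps 2--3 complete the proof exactly as stated.
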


\begin{proof}
    By the update rule of \cref{alg:ssgd}, one can simply deduce:
    \begin{align*}
        \norm[p]{z_t - x_{t+1}}
        &= \left( \norm[2]{z_t - x_t} + 2\gamma_t \langle z_t-x_t, g_t \rangle + \gamma_t^2 \norm[2]{g_t} \right)^{p/2}.
    \end{align*}

    We have the following two cases.

    \textbf{Case 1:} $ \norm[2]{z_t - x_t} + 2 \gamma_t \langle z_t-x_t, g_t \rangle > 0$. Then by using \cref{useful-ineq}, one successively deduce:
    \begin{align*}
        & \left( \norm[2]{z_t - x_t} + 2\gamma_t \langle z_t-x_t, g_t \rangle + \gamma_t^2 \norm[2]{g_t} \right)^{p/2} \\
        & \leq \left( \norm[2]{z_t - x_t} + 2 \gamma_t \langle z_t - x_t, g_t \rangle \right)^{p/2} + \gamma_t^p \norm[p]{g_t} \\
        & \leq \norm[p]{z_t-x_t} + p \gamma_t \langle z_t-x_t, g_t \rangle \norm[p-2]{z_t-x_t} + \gamma_t^p \norm[p]{g_t}.
    \end{align*}

    \textbf{Case 2:} $  \norm[2]{z_t-x_t} + 2\gamma_t \langle z_t-x_t, g_t \rangle \leq 0  $. From this condition, one has:
    \begin{align*}
        \norm[2]{z_t-x_t} \leq 2 \gamma_t \langle x_t-z_t, g_t \rangle \leq 2 \gamma_t \norm{z_t-x_t}\norm{g_t},
    \end{align*}
    which implies
    \begin{equation}\label{eq:descent_lem_unconstrained_proof_1}
        \norm{z_t-x_t} \leq 2 \gamma_t \norm{g_t}.
    \end{equation}

    Then, by the fact that $a \mapsto a^{p/2}$ is non-decreasing for $a \geq 0$ and $p \in (1, 2]$, one obtains:
    \begin{align*}
        & \left( \norm[2]{z_t - x_t} + 2\gamma_t \langle z_t-x_t, g_t \rangle + \gamma_t^2 \norm[2]{g_t} \right)^{p/2} \\
        & \leq \norm[p]{z_t-x_t} + p \gamma_t \langle z_t-x_t, g_t \rangle \norm[p-2]{z_t-x_t} + \gamma_t^p \norm[p]{g_t} \\
        & \quad - p \gamma_t \langle z_t-x_t, g_t \rangle \norm[p-2]{z_t-x_t}.
    \end{align*}

    By using \cref{eq:descent_lem_unconstrained_proof_1}, the last term above can be upper-bounded as:
    \begin{align*}
        &- p \gamma_t \langle z_t-x_t, g_t \rangle \norm[p-2]{z_t-x_t}
        \leq 2^{p-1}p \gamma_t^p \norm[p]{g_t},
    \end{align*}
    which gives
    \begin{align*}
        &\left( \norm[2]{z_t - x_t} + 2\gamma_t \langle z_t-x_t, g_t \rangle + \gamma_t^2 \norm[2]{g_t} \right)^{p/2} \\
        & \leq \norm[p]{z_t-x_t} + p \gamma_t \langle z_t-x_t, g_t \rangle \norm[p-2]{z_t-x_t} + (2^{p-1}p+1) \gamma_t^p \norm[p]{g_t}.
    \end{align*}

    Combining the two cases, taking the conditional expectation, together with using \cref{assump:stochastic_grad_oracle}, one has:
    \begin{align}
        \E[t]{ \norm[p]{z_t-x_{t+1}} }
        & \leq \norm[p]{z_t-x_t} + p \gamma_t \langle z_t-x_t, v_t \rangle \norm[p-2]{z_t-x_t} \nonumber \\
        & \quad + (2^{p-1}p+1) \gamma_t^p L^p,\label{eq:descent_lem_unconstrained_proof_2}
    \end{align}
    with $v_t \in \partial F(x_t)$. By $(\rho, p)$-paraconvexity of $F$, the second term in \cref{eq:descent_lem_unconstrained_proof_2} can be estimated as:
    \begin{subequations}
        \begin{align}
            & p \gamma_t \langle z_t-x_t, v_t \rangle \norm[p-2]{z_t-x_t} \nonumber \\
            & \leq p\gamma_t \left( F(z_t)-F(x_t) + \frac{\rho}{p} \norm[p]{z_t-x_t} \right) \norm[p-2]{z_t-x_t} \label{eq:descent_lem_unconstrained_proof_3} \\
            & \leq p \gamma_t \left( \frac{\rho}{p} - \frac{1}{rp} \right) \norm[p]{z_t-x_t} \norm[p-2]{z_t-x_t} \label{eq:descent_lem_unconstrained_proof_4} \\
            & = - (r^{-1} - \rho) \gamma_t \norm[2(p-1)]{z_t-x_t}. \label{eq:descent_lem_unconstrained_proof_5}
        \end{align}
    \end{subequations}

    \Cref{eq:descent_lem_unconstrained_proof_3} follows from \cref{charac-ineq1}; while \cref{eq:descent_lem_unconstrained_proof_4} follows from $z_t \in \gprox{x_t}$, which implies:
    \begin{align*}
        F(z_t) + \frac{1}{rp} \norm[p]{z_t-x_t} \leq F(x_t).
    \end{align*}

    Putting \cref{eq:descent_lem_unconstrained_proof_5} back into \cref{eq:descent_lem_unconstrained_proof_2} concludes the proof.
\end{proof}

We are now ready to state the following theorem on the rate of convergence of \cref{alg:ssgd} for solving problem \cref{eq:para-prob}.

\begin{theorem}\label{thm:ssgd} Under \cref{assump:stochastic_grad_oracle}, for $0 < r < (2^{p-1} \rho)^{-1}$, the sequence $\set{x_t}$ generated by \cref{alg:ssgd} satisfies:
\begin{align}\label{eq:unconstrained_conv_1}
&\E{ \sum_{t=0}^T \gamma_t d \left(0, \partial \moreaup[r]{x_t}\right)^2 } \nonumber \\
&\le \frac{p}{1-r\rho}  \pa{ \moreaup[r]{x_0} - \min F } + \frac{ \pa{ 2^{p-1}p+1 }L^p }{r(1-r\rho)} \sum_{t=0}^T \gamma_t^p.
\end{align}

In particular, if \cref{alg:ssgd} uses step size $\gamma_t = (t+1)^{-1/p}$, then its output $x_{t^*}$ will satisfy:
\begin{align}\label{eq:unconstrained_conv_2}
    & \E{ d \left( 0, \partial M^{1/2\rho}_F (x_{t^*}) \right)^2 } \nonumber \\
    & \leq c_1 ~ \frac{\moreaup[1/2\rho]{x_0}-\min F}{(T+2)^{1-1/p}-1} + c_2 ~ \frac{L^p (\ln\pa{T+1}+1)}{(T+2)^{1-1/p}-1},
\end{align}
with 
\begin{equation*}
    c_1 = 2 (p-1), \quad c_2 = \frac{ 4\rho (p-1) \pa{ 2^{p-1}p+1 } }{p}.
\end{equation*}
\end{theorem}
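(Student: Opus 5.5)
The plan is to turn \cref{lem:unconstrained_descent_lem} into a one-step decrease of the generalized Moreau envelope $\moreaup[r]{\cdot}$ with kernel $h(t)=t^p/p$, then telescope. One point needs care: $z_t$ must be the element of $\gproxp[r]{x_t}$ supplied by \cref{thm:generalized_moreau_paraweak}~(iii), so that the stationarity measure is attained at it. Since $\nabla j_h(u)=\norm[p-2]{u}u$, that theorem gives
\[
d\bigl(0,\partial \moreaup[r]{x_t}\bigr)=r^{-1}\norm[p-1]{x_t-z_t},
\qquad\text{equivalently}\qquad
\norm[2(p-1)]{z_t-x_t}=r^2\, d\bigl(0,\partial \moreaup[r]{x_t}\bigr)^2 .
\]
\cref{lem:unconstrained_descent_lem} holds for any $z_t\in\gproxp[r]{x_t}$, so this particular choice is allowed. (A measurable selection of $z_t$ can be assumed, or avoided by working with the right-hand side directly.)

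\textbf{Step 1 (one-step decrease).} Using $z_t$ as a feasible point in the definition of the envelope at $x_{t+1}$ gives
\[
\moreaup[r]{x_{t+1}}\le F(z_t)+\tfrac{1}{rp}\norm[p]{z_t-x_{t+1}} .
\]
Take $\E[t]{\cdot}$, insert \cref{lem:unconstrained_descent_lem}, and recognize $F(z_t)+\tfrac{1}{rp}\norm[p]{z_t-x_t}=\moreaup[r]{x_t}$. This yields
\[
\E[t]{\moreaup[r]{x_{t+1}}}\le \moreaup[r]{x_t}-\tfrac{1-r\rho}{r^2p}\gamma_t\norm[2(p-1)]{z_t-x_t}+\tfrac{(2^{p-1}p+1)L^p}{rp}\gamma_t^p .
\]
Substituting the identity above, the middle term becomes $-\tfrac{1-r\rho}{p}\gamma_t\, d(0,\partial\moreaup[r]{x_t})^2$.

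\textbf{Step 2 (telescoping).} Take full expectations, sum over $t=0,\dots,T$, and use $\moreaup[r]{x_{T+1}}\ge\inf F=\min F$. Multiplying by $p/(1-r\rho)$ then gives \cref{eq:unconstrained_conv_1} exactly, since $(1-r\rho)>0$ holds because $r\rho<2^{1-p}\le1$.

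\textbf{Step 3 (rate).} Set $r=1/(2\rho)$, so that $1-r\rho=1/2$.
\begin{itemize}
\item[(a)] This choice requires $1/(2\rho)<(2^{p-1}\rho)^{-1}$, which holds strictly for $p<2$. At $p=2$ it sits at the boundary of the admissible range. There I would either invoke the classical weakly convex Moreau envelope theory, which needs only $r<1/\rho$, or argue that the proof above used only $r\rho<1$ together with nonemptiness of the proximal set. This boundary case is the main subtlety I expect.
\item[(b)] By the sampling rule for $t^*$, $\E{d(0,\partial M(x_{t^*}))^2}=\E{\sum_t\gamma_t d_t^2}/\sum_t\gamma_t$.
\item[(c)] With $\gamma_t=(t+1)^{-1/p}$, \cref{fact:sum_step_size}~(iii) with $\beta=1/p$ gives $\sum_t\gamma_t\ge\tfrac{p}{p-1}\bigl((T+2)^{1-1/p}-1\bigr)$.
\item[(d)] \cref{fact:sum_step_size}~(ii) gives $\sum_t\gamma_t^p=\sum_t(t+1)^{-1}\le\ln(T+1)+1$.
\end{itemize}

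Collecting constants: the first term has coefficient $2p\cdot\tfrac{p-1}{p}=2(p-1)=c_1$. The second has coefficient $2p\cdot 2\rho\,\tfrac{(2^{p-1}p+1)}{p}\cdot\tfrac{p-1}{p}=\tfrac{4\rho(p-1)(2^{p-1}p+1)}{p}=c_2$, matching \cref{eq:unconstrained_conv_2}.
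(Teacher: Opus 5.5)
Your proposal is correct and follows the paper's proof essentially step for step: select $z_t$ via \cref{thm:generalized_moreau_paraweak}~(iii), bound the envelope at $x_{t+1}$ by its value at $z_t$ together with \cref{lem:unconstrained_descent_lem}, telescope, then set $r=1/(2\rho)$ and apply the step-size sums from \cref{fact:sum_step_size}. You also point out something the paper passes over: when $p=2$, the choice $r=1/(2\rho)$ sits on the boundary of the stated range $r<(2^{p-1}\rho)^{-1}$, and your fix via the classical weakly convex Moreau envelope theory (valid for all $r<1/\rho$) closes that gap.
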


\begin{proof}
    For any $t \in [0, T]$, by \cref{thm:generalized_moreau_paraweak}~(iii), one can pick $z_t \in \gproxp[r]{x_t}$ such that
    \begin{equation}\label{eq:unconstrained_conv_proof_0}
        d \pa{ 0, \moreaup[r]{x_t} } = r^{-1} \norm[p-1]{x_t-z_t}.
    \end{equation}
    
    By definition, one successively deduces:
    \begin{subequations}
    \begin{align}
        & \E[t]{\moreaup[r]{x_{t+1}}} \nonumber \\
        & \leq F(z_t) + \frac{1}{rp} \E[t]{\norm[p]{z_t-x_{t+1}}} \label{eq:unconstrained_conv_proof_1} \\
        & \leq \moreaup[r]{x_t} - \frac{ r^{-1} - \rho }{rp} \gamma_t \norm[2(p-1)]{z_t-x_t}
        + \frac{\pa{ 2^{p-1}p+1 }L^p}{rp} \gamma_t^p \label{eq:unconstrained_conv_proof_2},
    \end{align}
    \end{subequations}
    where \cref{eq:unconstrained_conv_proof_1} follows directly from the definition of the generalized proximal mapping, and \cref{eq:unconstrained_conv_proof_2} follows from \cref{lem:unconstrained_descent_lem}. By rearranging, taking the full expectation, then taking the sum from $t=0$ to $T$, one arrives at:
    \begin{align}
        &\E{ \sum_{t=0}^{T} \gamma_t \norm[2(p-1)]{z_t-x_t} } \nonumber \\
        & \leq \frac{rp}{r^{-1}-\rho} \left(\moreaup[r]{x_0} - \min F \right)
        + \frac{\pa{ 2^{p-1}p+1 }L^p}{r^{-1}-\rho} \sum_{t=0}^T \gamma_t^p. \label{eq:unconstrained_conv_proof_3}
    \end{align}

    Noticing that, by \cref{eq:unconstrained_conv_proof_0}, one has:
    \begin{equation*}
        d\left(0, \partial \moreaup[r]{x_t}\right)^2 = r^{-2} \norm[2(p-1)]{z_t-x_t},
    \end{equation*}
    which, together with \cref{eq:unconstrained_conv_proof_3}, proves \cref{eq:unconstrained_conv_1}. Let $r = 1/2\rho$, then by dividing two sides of \cref{eq:unconstrained_conv_1} by $\sum_{t=0}^T \gamma_t$, one obtains:
    \begin{align*}
        &\E{ d \left( 0, \partial M^{1/2\rho}_F (x_{t^*}) \right)^2 } \\
        & \leq \frac{2p \pa{ \moreaup[r]{x_0} - \min F } }{\sum_{t=0}^T \gamma_t} + 4 \rho \pa{ 2^{p-1}p+1 } L^p \frac{ \sum_{t=0}^T \gamma_t^p }{\sum_{t=0}^T \gamma_t}.
    \end{align*}

    Using \cref{fact:sum_step_size} gives:
    \begin{align*}
        & \sum_{t=0}^T \gamma_t = \sum_{t=0}^T (t+1)^{-1/p} \geq \frac{p}{p-1} \pa{ (T+2)^{1-1/p}-1 },\\ 
        & \sum_{t=0}^T \gamma_t^p = \sum_{t=0} (t+1)^{-1} \leq \ln\pa{T+1} + 1,
    \end{align*}
    which proves \cref{eq:unconstrained_conv_2}.
\end{proof}

\begin{remark}\label{remark:thm_unconstrained}
    \cref{thm:ssgd} guarantees the convergence rate $\mathcal{O}\pa{\ln T/T^{1-1/p}}$ for \cref{alg:ssgd} with a versatile decaying step size rule $\gamma_t = (t+1)^{-1/p}$ does not require any additional knowledge about the optimization problem except for $p$. Using a constant step size $\gamma_t \equiv \gamma = cT^{-1/p}$, then optimizing for $c$ can remove the logarithmic factor and yields smaller constants $c_1, c_2$. However, this requires knowing in advance the horizon $T$ and other problem-dependent constants. In practice, one can use $\gamma_t = c(t+1)^{-1/p}$ then tunes for $c$.
\end{remark}

\section{Stochastic proximal subgradient method for composite paraconvex optimization}\label{sec:proximal}
Consider the problem of \textit{composite paraconvex optimization:}
\begin{equation}\label{CP-prob}
\min_{x\in\R^n}F(x)\coloneqq f(x)+\varphi(x),
\end{equation}
where $\varphi:\R^n\to\R\cup\{+\infty\}$ is a proper l.s.c. convex function, and $f:\R^n\to\R\cup\{+\infty\}$ is proper, l.s.c. and $(\rho, p)$-paraconvex. We also make the following assumption on the stochastic gradient oracle that is similar to \cref{assump:stochastic_grad_oracle}.

\begin{assumption}\label[assumption]{assump:stochastic_grad_oracle_1}
    Fix a probability space $\left( \Omega, \mathcal{F}, \pi \right)$, and equip $\R^n$ with the Borel $\sigma$-algebra. We make the following assumptions:
    \begin{itemize}
    \item[(i)] We can draw i.i.d. realizations $\xi_1, \xi_2, \dots$ from $\pi$.
    \item[(ii)] There exists an open set $U \supset \dom{\varphi}$ on which $f$ is finite, and a measurable mapping $g: U \times \Omega \rightarrow\R^n$ such that $\mathbb{E}_{\xi\sim \pi} g(x,\xi)\in\partial F(x),$ for all $x\in U$.
    \item[(iii)] There exists a constant $L>0$ such that $\mathbb{E}_{\xi\sim P} \|g(x,\xi)\|^p\le L^p,$ for all $x \in \dom{\varphi}$.
    \end{itemize}
\end{assumption}


Noticing that, similarly as in \cite{davis_stochastic_2019}, \cref{assump:stochastic_grad_oracle_1}~(iii) implies $\|v\|\le L$ for all $v\in\partial f(x),$ $x\in\dom\varphi$. We will investigate the proximal stochastic subgradient method (\cref{alg:psgd}) for solving problem \cref{CP-prob} with $p \in (1, 2)$ for simplicity. The convergence rate of \cref{alg:psgd} for the weakly convex case $p=2$ given in \cite{davis_stochastic_2019} can be recovered with an additional assumption on the step size. We provide more discussion in \cref{remark:recover_weakly_convex}. 
\begin{algorithm}
\caption{Proximal stochastic subgradient method}
\label{alg:psgd}
\begin{algorithmic}[1]
\REQUIRE{Initial model $x_0 \in \dom{\varphi}$, step size sequence $\{\gamma_t\}_{t \in \N} \subset \R^+$}
\FOR{$t \in [0, T]$}
\STATE{Draw $\xi_t \overset{i.i.d.}{\sim} \pi$}
\STATE\label{line:proximal_update}{Update $x_{t+1} = \prox[\gamma_t \varphi]{ x_t - \gamma_t g(x_t, \xi_t) }$}
\ENDFOR
\STATE{Sample $t^*$ according to $\mathbb{P}(t^*=t) = \gamma_t/\sum_{t'=0}^T \gamma_{t'}$}
\RETURN $x_{t^*}$
\end{algorithmic}
\end{algorithm}

In \cref{alg:psgd}, the update is performed using the standard proximal operator with respect to the convex function $\varphi$ and smoothing parameter $\gamma_t > 0$:
\begin{equation*}
    \prox[\gamma_t \varphi]{ x } = \margmin{ \varphi(y) + \frac{1}{2\gamma_t} \norm[2]{y-x} }.
\end{equation*}

Before going to the analysis, we introduce the following useful inequalities, which will be used frequently.
\begin{fact}\label{useful-ineq1}
    For $a, b \in \R^n$, $p > 1$, the following holds:
    \begin{itemize}
        \item[(i)] For every $\lambda \in (0, 1)$, $ \norm[p]{a+b} \geq \lambda^{p-1} \norm[p]{a} - \pa{ \frac{\lambda}{1-\lambda} }^{p-1} \norm[p]{b} $.
        \item[(ii)] $ \norm[p]{a-b} \leq 2^{p-1} \pa{ \norm[p]{a} + \norm[p]{b} } $.
    \end{itemize}
\end{fact}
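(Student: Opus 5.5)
The statement to prove is \cref{useful-ineq1}: two elementary norm inequalities. Both will follow from convexity of $s\mapsto s^p$ on $\R_+$ (valid since $p>1$) together with the triangle inequality. No earlier result of the paper is needed.

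\textbf{Part (ii).} Use the triangle inequality $\norm{a-b}\le \norm{a}+\norm{b}$. Then apply convexity of $s\mapsto s^p$ at the midpoint:
\[
\Big(\tfrac{\norm{a}+\norm{b}}{2}\Big)^p\le \tfrac12\big(\norm[p]{a}+\norm[p]{b}\big).
\]
Multiplying through by $2^p$ gives $\norm[p]{a-b}\le 2^{p-1}\big(\norm[p]{a}+\norm[p]{b}\big)$.

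\textbf{Part (i).} Write $a=(a+b)-b$ and decompose with weights $\lambda$ and $1-\lambda$:
\[
\norm{a}\le \norm{a+b}+\norm{b}=\lambda\cdot\frac{\norm{a+b}}{\lambda}+(1-\lambda)\cdot\frac{\norm{b}}{1-\lambda}.
\]
Raise both sides to the power $p$; this is legitimate since $s\mapsto s^p$ is nondecreasing on $\R_+$. Jensen's inequality then gives
\[
\norm[p]{a}\le \lambda^{1-p}\norm[p]{a+b}+(1-\lambda)^{1-p}\norm[p]{b}.
\]
Multiply through by $\lambda^{p-1}>0$:
\[
\lambda^{p-1}\norm[p]{a}\le \norm[p]{a+b}+\Big(\tfrac{\lambda}{1-\lambda}\Big)^{p-1}\norm[p]{b}.
\]
Rearranging gives exactly (i).

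\textbf{Main point of care.} The only step requiring attention is choosing the weights $\lambda$ and $1-\lambda$ in (i) so that the constants come out precisely as $\lambda^{p-1}$ and $(\lambda/(1-\lambda))^{p-1}$. The weighted split above does this directly. If I had tried the alternative of applying the weights to $a=(a+b)+(-b)$ with roles swapped, the constants would not match, so I will use the split as written. Everything else is routine.
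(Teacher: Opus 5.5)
Your proof is correct and essentially matches the paper's. For (i) the paper uses the same split $a = \lambda\frac{a+b}{\lambda} + (1-\lambda)\frac{-b}{1-\lambda}$ and applies convexity of $\|\cdot\|^p$ directly to vectors, which is equivalent to your triangle inequality plus scalar convexity. The only difference is in (ii): the paper obtains it as a corollary of (i) with $\lambda = 1/2$ and $a$ replaced by $a-b$, whereas you prove it directly by the same midpoint-convexity argument.
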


\begin{proof}
    The first statement follows from the convexity of $\norm[p]{.}$ and from noticing that $a = \lambda \frac{a+b}{\lambda} + (1-\lambda) \frac{-b}{1-\lambda} $. The second statement follows by setting $\lambda=1/2$ and substituting $a-b$ for $a$ in the first statement.
\end{proof}

Similarly to \cref{sec:ssgd}, the analysis will use the generalized Moreau envelope and the generalized proximal mapping with the kernel $h(t)=(1/p)t^p$. We break the analysis of \cref{alg:psgd} into two lemmas. In the following, fix $0 < r < (2\rho)^{-1}$, and let $z_t = \gproxp[r]{x_t}$. The first lemma relates $z_t$ as a proximal point of $\varphi$ using the optimality conditions of the (generalized) proximal mappings and the subdifferential sum rule.
\begin{lemma}\label{lem:relating_2_proximal}
    For every $t \in [0, T]$, there exists $\hat{v}_t \in \partial f(z_t)$ such that:
    \begin{subequations}
    \begin{align}
        z_t &= \prox[\gamma_t \varphi]{y_t},\label{eq:prox_z_t}\\
        y_t &= z_t - r^{-1}\gamma_t \norm[p-2]{z_t-x_t}(z_t-x_t) - \gamma_t \hat{v}_t.\label{eq:y_t}
    \end{align}
    \end{subequations}
\end{lemma}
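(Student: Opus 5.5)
The plan is to write down the first-order optimality condition for $z_t\in\gproxp[r]{x_t}$, split it with the limiting subdifferential sum rule, and then recognize the resulting inclusion as the optimality condition of the convex proximal problem defining $\prox[\gamma_t\varphi]{y_t}$.

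\textbf{Step 1: optimality of $z_t$.} Since $z_t$ minimizes
\[
y\mapsto f(y)+\varphi(y)+\tfrac{1}{rp}\norm[p]{y-x_t},
\]
Fermat's rule gives
\[
0\in\partial\Bigl(f+\varphi+\tfrac{1}{rp}\norm[p]{\cdot-x_t}\Bigr)(z_t).
\]
The kernel $y\mapsto \tfrac1p\norm[p]{y-x_t}$ is $C^1$ for $p>1$, with gradient $\norm[p-2]{y-x_t}(y-x_t)$, interpreted as $0$ at $y=x_t$. So it can be split off exactly, which yields
\[
-r^{-1}\norm[p-2]{z_t-x_t}(z_t-x_t)\in\partial(f+\varphi)(z_t).
\]

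\textbf{Step 2: split $\partial(f+\varphi)(z_t)$.} I would first check that $z_t\in\dom\varphi\subset U$, which holds because $M^r_F(x_t)$ is finite. Then $z_t$ lies in the open set $U$ on which $f$ is finite. Since $f$ is paraconvex, it is locally Lipschitz on $\operatorname{int}\dom f\supset U$ by the paraconvexity proposition. The classical sum rule then gives
\[
\partial(f+\varphi)(z_t)\subseteq\partial f(z_t)+\partial\varphi(z_t),
\]
and $\partial\varphi$ is the convex subdifferential. Hence there exist $\hat v_t\in\partial f(z_t)$ and $w_t\in\partial\varphi(z_t)$ with
\[
0=r^{-1}\norm[p-2]{z_t-x_t}(z_t-x_t)+\hat v_t+w_t.
\]

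\textbf{Step 3: identify the proximal point.} Multiply the last identity by $\gamma_t$ and define $y_t$ by \cref{eq:y_t}. This gives $y_t-z_t=\gamma_t w_t\in\gamma_t\partial\varphi(z_t)$. Because $\varphi$ is proper, l.s.c. and convex, the objective $u\mapsto\varphi(u)+\frac{1}{2\gamma_t}\norm[2]{u-y_t}$ is strongly convex. Its unique minimizer is characterized by $0\in\partial\varphi(u)+\gamma_t^{-1}(u-y_t)$, and $u=z_t$ satisfies this, so $z_t=\prox[\gamma_t\varphi]{y_t}$. This is \cref{eq:prox_z_t}.

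\textbf{Main obstacle.} The delicate point is Step 2: justifying the sum rule, which needs $f$ locally Lipschitz at $z_t$, i.e.\ $z_t\in\operatorname{int}\dom f$. This is exactly what the open set $U\supset\dom\varphi$ in \cref{assump:stochastic_grad_oracle_1}~(ii) provides. Also, if $\gproxp[r]{x_t}$ is not a singleton, the claim should be read for any chosen selection $z_t$. Existence of such a $z_t$ follows from \cref{thm:generalized_moreau_paraweak}~(i), using the growth condition for $F$ with $r<(2\rho)^{-1}\le(2^{p-1}\rho)^{-1}$.
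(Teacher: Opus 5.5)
Your proposal is correct and follows the same route as the paper: Fermat's rule for the generalized proximal problem, the sum rule (justified because $f$ is finite, hence locally Lipschitz, on the open set $U\supset\operatorname{dom}\varphi$), and then reading the resulting inclusion as the optimality condition that defines $\operatorname{prox}_{\gamma_t\varphi}(y_t)$. The differences are cosmetic: you use only the inclusion form of the sum rule, which is all that is needed (the paper asserts equality), and your sign $y_t-z_t\in\gamma_t\partial\varphi(z_t)$ is the correct one, whereas the paper's displayed $z_t-y_t$ is a slip.
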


\begin{proof}
    Since $f$ is finite on an open set containing $\dom{\varphi}$, and $z_t \in \dom{\varphi}$ by the definition of $z_t$, the subdifferential sum rule gives:
    \begin{equation*}
        \partial F(z_t) = \partial f(z_t) + \partial \varphi(z_t).
    \end{equation*}

    Hence, by the optimality condition of the generalized proximal mapping, there exists $\hat{v}_t \in \partial f(z_t)$ such that:
    \begin{equation*}
        0 \in v_t + \partial \varphi(z_t) + r^{-1}\norm[p-2]{z_t-x_t}(z_t-x_t).
    \end{equation*}

    Let $y_t$ be defined by \cref{eq:y_t}, then equivalently, one has:
    \begin{equation*}
        z_t - y_t \in \gamma_t \partial \varphi(z_t),
    \end{equation*}
    which gives \cref{eq:prox_z_t} by the optimality condition for the proximal mapping.
\end{proof}

The following lemma establishes a descent property for the iterates generated by the stochastic proximal subgradient method.
\begin{lemma}\label{lem:cp_descent_lem}
    Given \cref{assump:stochastic_grad_oracle_1}, the sequence $\set{x_t}$ generated by \cref{alg:psgd} satisfies:
    \begin{align*}
        & \E[t]{\norm[p]{z_t-x_{t+1}}} \\
         & \leq \norm[p]{z_t-x_t} - (r^{-1}-2\rho) \gamma_t \norm[2(p-1)]{z_t-x_t}
         + c_3 \gamma_t^{\frac{p}{2-p}}
         + c_4 \gamma_t^p L^p.
    \end{align*}
    with
    \begin{equation*}
        c_3 = \pa{ 2^{p-1}r^{-\frac{p}{2-p}} + (r^{-1}-2\rho) r^{-\frac{2(p-1)}{2-p}} }, \quad 
        c_4 = 2^p \pa{1+2^{p-1}p}.
    \end{equation*}
\end{lemma}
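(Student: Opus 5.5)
Using \cref{lem:relating_2_proximal}, I will write $z_t=\prox[\gamma_t\varphi]{y_t}$ and $x_{t+1}=\prox[\gamma_t\varphi]{x_t-\gamma_t g_t}$. Since the proximal map of a convex function is nonexpansive,
\begin{equation*}
\norm{z_t-x_{t+1}}\le \norm{y_t-x_t+\gamma_t g_t}=\norm{(z_t-x_t)-\gamma_t r^{-1}\norm[p-2]{z_t-x_t}(z_t-x_t)-\gamma_t(\hat v_t-g_t)}.
\end{equation*}
Set $w_t=(1-\gamma_t r^{-1}\norm[p-2]{z_t-x_t})(z_t-x_t)$ and $e_t=\gamma_t(g_t-\hat v_t)$. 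Then $\norm{z_t-x_{t+1}}\le\norm{w_t+e_t}$, and I will expand $\norm[p]{w_t+e_t}=(\norm[2]{w_t}+2\langle w_t,e_t\rangle+\norm[2]{e_t})^{p/2}$ exactly as in \cref{lem:unconstrained_descent_lem}. There are two cases. If $\norm[2]{w_t}+2\langle w_t,e_t\rangle>0$, \cref{useful-ineq} gives $\norm[p]{w_t}+p\langle w_t,e_t\rangle\norm[p-2]{w_t}+\norm[p]{e_t}$. Otherwise $\norm{w_t}\le 2\norm{e_t}$, and the linear term is absorbed at cost $2^{p-1}p\norm[p]{e_t}$. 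Thus, in both cases,
\begin{equation*}
\norm[p]{z_t-x_{t+1}}\le \norm[p]{w_t}+p\langle w_t,e_t\rangle\norm[p-2]{w_t}+(1+2^{p-1}p)\norm[p]{e_t}.
\end{equation*}

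Next I take $\E[t]{\cdot}$. Here $w_t$ is deterministic given the past, and $\E[t]{g_t}=v_t\in\partial f(x_t)$, so the linear term becomes $p\gamma_t\langle w_t,v_t-\hat v_t\rangle\norm[p-2]{w_t}$. For the noise term, \cref{useful-ineq1}(ii) together with $\norm{\hat v_t}\le L$ and $\E[t]{\norm[p]{g_t}}\le L^p$ gives $\E[t]{\norm[p]{e_t}}\le 2^{p-1}\gamma_t^p(L^p+L^p)=2^p\gamma_t^pL^p$, which yields the constant $c_4$. Write $s=\gamma_t r^{-1}\norm[p-2]{z_t-x_t}$, so $w_t=(1-s)(z_t-x_t)$. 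I split into two cases according to the size of $s$.

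\textbf{Case $s\le 1$.} Here $w_t$ is a nonnegative multiple of $z_t-x_t$, so the linear term equals $p\gamma_t(1-s)^{p-1}\norm[p-2]{z_t-x_t}\langle z_t-x_t,v_t-\hat v_t\rangle$. \Cref{charac-thm}(iii) bounds $\langle z_t-x_t,v_t-\hat v_t\rangle$ by $\tfrac{2\rho}{p}\norm[p]{z_t-x_t}$, so the linear term is at most $2\rho\gamma_t\norm[2(p-1)]{z_t-x_t}$, using $(1-s)^{p-1}\le1$. For the leading term, $\norm[p]{w_t}=(1-s)^p\norm[p]{z_t-x_t}\le(1-s)\norm[p]{z_t-x_t}=\norm[p]{z_t-x_t}-r^{-1}\gamma_t\norm[2(p-1)]{z_t-x_t}$. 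Adding the two bounds gives exactly the descent term $-(r^{-1}-2\rho)\gamma_t\norm[2(p-1)]{z_t-x_t}$.

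\textbf{Case $s>1$.} This is the main obstacle, because the sign of $w_t$ flips relative to $z_t-x_t$. Since $p<2$, the condition $s>1$ means $\norm{z_t-x_t}<(r^{-1}\gamma_t)^{1/(2-p)}$, i.e.\ the distance is small. I will bound everything crudely. First, $\norm{w_t}=(s-1)\norm{z_t-x_t}\le s\norm{z_t-x_t}=r^{-1}\gamma_t\norm[p-1]{z_t-x_t}$, so $\norm[p]{w_t}\le r^{-p}\gamma_t^p\norm[p(p-1)]{z_t-x_t}\le r^{-p/(2-p)}\gamma_t^{p/(2-p)}$ after substituting the small-distance bound. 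For the linear term, I bound $\norm{v_t-\hat v_t}\le 2L$ and apply Young's inequality; this produces a $\gamma_t^{p/(2-p)}$-type remainder plus a multiple of $\gamma_t^pL^p$. Finally, I will add and subtract $(r^{-1}-2\rho)\gamma_t\norm[2(p-1)]{z_t-x_t}$, using $\gamma_t\norm[2(p-1)]{z_t-x_t}\le r^{-2(p-1)/(2-p)}\gamma_t^{p/(2-p)}$. This explains the second summand in $c_3$; the factor $2^{p-1}$ in the first summand should come from the absorption step.

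I expect the exact constants in Case $s>1$ (particularly the linear term and the matching of $c_3,c_4$) to need some juggling. Combining the two cases then gives the claim.
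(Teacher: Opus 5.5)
Your Case $s\le 1$ is essentially the paper's Claim~1 (with $\alpha_t=1-s$), and your preliminary estimates are correct. The gap is in Case $s>1$. Along the route you describe, the stated constants $c_3$ and $c_4$ cannot be reached, so juggling will not close it.

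After the expansion, the noise term $(1+2^{p-1}p)\,\mathbb{E}_t\|e_t\|^p$ already uses the entire budget $c_4\gamma_t^pL^p$, since the only available estimate is $\mathbb{E}_t\|e_t\|^p\le 2^p\gamma_t^pL^p$. The linear term now carries the negative factor $1-s$, so it cannot be signed: hypomonotonicity of $\partial f$ only bounds $\langle z_t-x_t,v_t-\hat v_t\rangle$ from above. You must therefore pay $2pL\gamma_t\|w_t\|^{p-1}$. The target's only $L$-dependence is $c_4\gamma_t^pL^p$, so any Young-type bound on this term spends a strictly positive extra multiple of $\gamma_t^pL^p$ beyond $c_4$. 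It also inflates the coefficient of $r^{-p/(2-p)}\gamma_t^{p/(2-p)}$; the standard split gives $1+2(p-1)=2p-1>2^{p-1}$. Your guess about where $2^{p-1}$ comes from is also off. The absorption step produces $2^{p-1}p\|e_t\|^p$, which is a noise term, not a $\gamma_t^{p/(2-p)}$ term.

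The missing idea is not to expand at all when $s>1$. Instead, apply \cref{useful-ineq1}(ii) directly to the nonexpansiveness bound, $\|z_t-x_{t+1}\|^p\le 2^{p-1}\bigl(\|w_t\|^p+\|e_t\|^p\bigr)$. Then no linear term appears, and the constants come out exactly:
\begin{itemize}
\item Your estimate $\|w_t\|\le r^{-1}\gamma_t\|z_t-x_t\|^{p-1}\le (r^{-1}\gamma_t)^{1/(2-p)}$ gives $2^{p-1}r^{-p/(2-p)}\gamma_t^{p/(2-p)}$.
\item The noise gives $2^{p-1}\cdot 2^p\gamma_t^pL^p=2^{2p-1}\gamma_t^pL^p\le c_4\gamma_t^pL^p$.
\item Adding $\|z_t-x_t\|^p\ge 0$, together with your add-and-subtract of $(r^{-1}-2\rho)\gamma_t\|z_t-x_t\|^{2(p-1)}$, yields the lemma with the stated $c_3,c_4$.
\end{itemize}
This is the paper's Claim~2.

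Your bound $\|w_t\|\le s\|z_t-x_t\|$ is actually the right one to use there. The paper justifies this step by $|\alpha_t|\le 1$, which fails once $s>2$ (and $s\to\infty$ as $\|z_t-x_t\|\to 0$ because $p<2$). The inequality the paper needs, $|\alpha_t|^p\|z_t-x_t\|^p\le r^{-p/(2-p)}\gamma_t^{p/(2-p)}$, still holds by your argument.
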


\begin{proof}
     Let $\hat{v}_t \in \partial f(z_t)$. Then, by using \cref{lem:relating_2_proximal} and the update rule of \cref{alg:psgd}, together with the non-expansiveness of $\operatorname{prox}_{\gamma_t \varphi}(.)$, one successively deduces:
     \begin{subequations}
     \begin{align}
         &\norm[p]{z_t - x_{t+1}} \nonumber \\
         &= \norm[p]{\prox{z_t - r^{-1}\gamma_t \norm[p-2]{z_t-x_t}(z_t-x_t) - \gamma_t \hat{v}_t} - \prox{x_t - \gamma_t g_t}} \nonumber \\
         & \leq \norm[p]{ \alpha_t (z_t-x_t) - \gamma_t (\hat{v}_t-g_t) } \label{eq:descent_lem_cp_proof} \\
         & \leq \pa{ \alpha_t^2 \norm[2]{z_t-x_t} - 2\gamma_t\alpha_t \langle z_t-x_t, \hat{v}_t-g_t \rangle + \gamma_t^2 \norm[2]{\hat{v_t}-g_t} }^{p/2},\label{eq:descent_lem_cp_proof_1}
     \end{align}
     \end{subequations}
     with $\alpha_t = 1-r^{-1}\gamma_t \norm[p-2]{z_t-x_t} $. We first prove the following claim:

     \textbf{Claim 1:} If $\alpha_t > 0$, then:
     \begin{align*}
         & \E[t]{ \norm[p]{z_t-x_{t+1}} } \\
         & \leq \norm[p]{z_t-x_t} - (r^{-1}-2\rho) \gamma_t \norm[2(p-1)]{z_t-x_t} + 2^p (1+2^{p-1}p) \gamma_t^p L^p.
     \end{align*}

     To prove \textbf{Claim 1}, we need to distinguish the following two cases.

     \textbf{Case 1:} $\alpha_t^2 \norm[2]{z_t-x_t} - 2 \gamma_t \alpha_t \inn{ z_t-x_t, \hat{v}_t - g_t } > 0 $. Then, from \cref{eq:descent_lem_cp_proof_1}, by applying successively the inequalities in \cref{useful-ineq}, one obtains:
     \begin{align}
         & \norm[p]{z_t-x_{t+1}} \nonumber \\
         & \leq \alpha_t^p \norm[p]{z_t-x_t} + p\gamma_t \alpha_t^{p-1} \inn{ z_t-x_t, g_t-\hat{v}_t } \norm[p-2]{z_t-x_t}
          + \gamma_t^p { \norm[p]{\hat{v}_t-g_t} } .\label{eq:descent_lem_cp_proof_2}
     \end{align}

     \textbf{Case 2:} $\alpha_t^2 \norm[2]{z_t-x_t} - 2\gamma_t \alpha_t \inn{ z_t-x_t, \hat{v}_t-g_t } \leq 0$. This condition gives:
     \begin{equation}\label{eq:descent_lem_cp_proof_3}
        \alpha_t \norm{z_t-x_t} \leq 2\gamma_t \norm{\hat{v}_t-g_t}.
     \end{equation}

     By using the fact that $a \mapsto a^{p/2}$ is non-increasing for $a \geq 0$ and $p \in (1, 2]$, from \cref{eq:descent_lem_cp_proof_1}, one arrives at:
     \begin{align}
         & \norm[p]{z_t-x_{t+1}} \nonumber \\
         & \leq \alpha_t^p \norm[p]{z_t-x_t} + p\gamma_t \alpha_t^{p-1} \inn{ z_t-x_t, g_t-\hat{v}_t } \norm[p-2]{z_t-x_t}
          + \gamma_t^p { \norm[p]{\hat{v}_t-g_t} } \nonumber \\
         & \quad + p\gamma_t \alpha_t^{p-1} \inn{ z_t-x_t, \hat{v_t}-g_t } \norm[p-2]{z_t-x_t} \nonumber \\
         & \leq \alpha_t^p \norm[p]{z_t-x_t} + p\gamma_t \alpha_t^{p-1} \inn{ z_t-x_t, g_t-\hat{v}_t } \norm[p-2]{z_t-x_t} \nonumber \\
         & \quad + \pa{ 1+2^{p-1}p } \gamma_t^p { \norm[p]{\hat{v}_t-g_t} }, \label{eq:descent_lem_cp_proof_4}
     \end{align}
     where the last inequality follows from \cref{eq:descent_lem_cp_proof_3}. Combining \eqref{eq:descent_lem_cp_proof_2} and \eqref{eq:descent_lem_cp_proof_4}, then taking the conditional expectation, one obtains:
     \begin{align}
         & \E[t]{\norm[p]{z_t-x_{t+1}}} \nonumber \\
         & \leq \alpha_t^p \norm[p]{z_t-x_t} + p\gamma_t \alpha_t^{p-1} \inn{ z_t-x_t, \EE_t g_t-\hat{v}_t } \norm[p-2]{z_t-x_t} \nonumber \\
         & \quad + \pa{ 1+2^{p-1}p } \gamma_t^p \E[t]{ \norm[p]{\hat{v}_t-g_t} }. \label{eq:descent_lem_cp_proof_5}
     \end{align}

     By \cref{useful-ineq1}, the last term in \cref{eq:descent_lem_cp_proof_5} can be evaluated as:
     \begin{equation}
         \E[t]{ \norm[p]{\hat{v}_t -g_t} } \leq 2^{p-1} \pa{ \norm[p]{\hat{v}_t} + \E[t]{ \norm[p]{g_t} } }
         \leq 2^p L^p, \label{eq:descent_lem_cp_proof_9}
     \end{equation}
     which follows from \cref{assump:stochastic_grad_oracle_1}~(iii).

     The second term of \cref{eq:descent_lem_cp_proof_5} can be evaluated by noticing that $\EE_t g_t \in \partial f(x_t)$, hence
     \begin{equation*}
         \inn{ z_t-x_t, \EE_t g_t - \hat{v}_t } \leq \frac{2\rho}{p} \norm[p]{z_t-x_t},
     \end{equation*}
     which follows from \cref{charac-ineq2} by the paraconvexity of $f$. Finally, since $0 < \alpha_t \leq 1$, one has $\alpha_t^p \leq \alpha_t$, and $\alpha_t^{p-1} \leq 1$. Thus, \cref{eq:descent_lem_cp_proof_5} will lead to:
     \begin{align*}
         & \E[t]{ \norm[p]{z_t-x_{t+1}} } \nonumber \\
         & \leq \norm[p]{z_t-x_t} - (r^{-1}-2\rho) \gamma_t \norm[2(p-1)]{z_t-x_t} + 2^p (1+2^{p-1}p) \gamma_t^p L^p,
     \end{align*}
     which is exactly \textbf{Claim 1}. For the case where $\alpha_t \le 0$, we have the following claim.

     \textbf{Claim 2.} If $\alpha_t \le 0$, then
     \begin{align*}
         & \E[t]{\norm[p]{z_t-x_{t+1}}} \\
         & \leq \norm[p]{z_t-x_t} - (r^{-1}-2\rho) \gamma_t \norm[2(p-1)]{z_t-x_t} \\
         &\quad + \pa{ 2^{p-1}r^{-\frac{p}{2-p}} + (r^{-1}-2\rho) r^{-\frac{2(p-1)}{2-p}} } \gamma_t^{\frac{p}{2-p}}
         + 2^{2p-1}\gamma_t^p L^p.
     \end{align*}

     To prove \textbf{Claim 2}, first noticing that $\alpha_t \le 0$ gives:
     \begin{equation}
         \norm[2-p]{z_t-x_t} \leq r^{-1} \gamma_t.\label{eq:descent_lem_cp_proof_6}
     \end{equation}

     Then, from \cref{eq:descent_lem_cp_proof}, by using \cref{useful-ineq1}, one has:
     \begin{subequations}
     \begin{align}
         &\norm[p]{z_t-x_{t+1}}
         \leq 2^{p-1} \pa{ |\alpha_t|^p \norm[p]{z_t-x_t} + \gamma_t^p \norm[p]{\hat{v}_t-g_t} } \nonumber \\
         & \leq \norm[p]{z_t-x_t} - (r^{-1}-2\rho)\gamma_t \norm[2(p-1)]{z_t-x_t} \nonumber \\
         & \quad + 2^{p-1} \pa{ \norm[p]{z_t-x_t} + \gamma_t^p \norm[p]{\hat{v}_t-g_t} } 
         + (r^{-1}-2\rho)\gamma_t \norm[2(p-1)]{z_t-x_t} \label{eq:descent_lem_cp_proof_7} \\
         & \leq \norm[p]{z_t-x_t} - (r^{-1}-2\rho) \gamma_t \norm[2(p-1)]{z_t-x_t} \nonumber \\
         & \quad + 2^{p-1} \pa{ r^{ -\frac{p}{2-p} }\gamma_t^{\frac{p}{2-p}} + \gamma_t^p \norm[p]{\hat{v}_t-g_t} }
         + (r^{-1}-2\rho) r^{ -\frac{2(p-1)}{2-p} } \gamma_t ^{ \frac{p}{2-p}}, \label{eq:descent_lem_cp_proof_8}
     \end{align}
     \end{subequations}
     where \cref{eq:descent_lem_cp_proof_7} follows from $|\alpha_t| \leq 1$, and \cref{eq:descent_lem_cp_proof_8} follows from \cref{eq:descent_lem_cp_proof_6}. Taking the conditional expectation, then using \cref{eq:descent_lem_cp_proof_9} proves \textbf{Claim 2}. Combining the results from two claims proves the lemma.
\end{proof}

Now we are ready to state the following theorem on the rate of convergence of \cref{alg:psgd} for solving problem \cref{CP-prob}.

\begin{theorem}\label{thm:conv_psgd}
    Under \cref{assump:stochastic_grad_oracle_1}, for $0 < r < (2\rho)^{-1}$, the sequence $\set{x_t}$ generated by \cref{alg:psgd} satisfies:
    \begin{align}\label{eq:cp_conv_1}
    &\E{ \sum_{t=0}^T \gamma_t d \left(0, \partial \moreaup[r]{x_t}\right)^2 } \nonumber \\
    &\le \frac{p}{1-2\rho r}  \pa{ \moreaup[r]{x_0} - \min F } + \frac{ c_3 }{r(1-2\rho r)} \sum_{t=0}^T \gamma_t^{\frac{p}{2-p}} 
    + \frac{c_4L^p}{r(1-2\rho r)} \sum_{t=0}^T \gamma_t^p.
    \end{align}
    
    In particular, if \cref{alg:psgd} uses step size $\gamma_t = (t+1)^{-1/p}$, then its output $x_{t^*}$ will satisfy:
    \begin{align}\label{eq:cp_conv_2}
        & \E{ d \left( 0, \partial M^{1/4\rho}_F (x_{t^*}) \right)^2 } \nonumber \\
        & \leq \frac{c_5~(\moreaup[1/4\rho]{x_0}-\min F) + c_6}{(T+2)^{1-1/p}-1} + \frac{c_7 L^p (\ln\pa{T+1}+1)}{(T+2)^{1-1/p}-1},
    \end{align}
    with 
    \begin{equation*}
        c_5 = 2(p-1), ~ c_6 = \frac{8 \rho^{\frac{2}{2-p}}}{p} \pa{ 2^{p-1+\frac{2p}{2-p}} + 2^{\frac{3p-2}{2-p}} }, ~
        c_7 = \frac{8\rho (p-1) c_4}{p},
    \end{equation*}
    $c_3$ and $c_4$ defined in \cref{lem:cp_descent_lem}.
\end{theorem}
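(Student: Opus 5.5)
The argument follows the proof of \cref{thm:ssgd}, with \cref{lem:cp_descent_lem} in place of \cref{lem:unconstrained_descent_lem}. Fix $0<r<(2\rho)^{-1}$. Since $F=f+\varphi$ with $\varphi$ convex, $F$ is still $(\rho,p)$-paraconvex and satisfies the growth condition \cref{eq:growth_cond_1} with $a=\rho$. Moreover $r<(2\rho)^{-1}\le(2^{p-1}\rho)^{-1}$ because $p\le 2$. Hence \cref{thm:generalized_moreau_paraweak}~(iii) applies, and for each $t$ we can pick $z_t\in\gproxp[r]{x_t}$ with
\[
d\pa{0,\partial\moreaup[r]{x_t}}=r^{-1}\norm[p-1]{x_t-z_t},
\qquad\text{so}\qquad
d\pa{0,\partial\moreaup[r]{x_t}}^2=r^{-2}\norm[2(p-1)]{z_t-x_t}.
\]
We use this particular $z_t$ in \cref{lem:relating_2_proximal} and \cref{lem:cp_descent_lem}; both only require $z_t$ to be some generalized proximal point.

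The descent step is the following chain:
\[
\E[t]{\moreaup[r]{x_{t+1}}}\le F(z_t)+\tfrac{1}{rp}\E[t]{\norm[p]{z_t-x_{t+1}}}.
\]
Next substitute \cref{lem:cp_descent_lem} and use $F(z_t)+\frac1{rp}\norm[p]{z_t-x_t}=\moreaup[r]{x_t}$. This gives
\[
\E[t]{\moreaup[r]{x_{t+1}}}\le \moreaup[r]{x_t}-\frac{r^{-1}-2\rho}{rp}\gamma_t\norm[2(p-1)]{z_t-x_t}+\frac{c_3}{rp}\gamma_t^{\frac{p}{2-p}}+\frac{c_4L^p}{rp}\gamma_t^p.
\]
Take full expectations, sum from $t=0$ to $T$, telescope, and use $\moreaup[r]{x_{T+1}}\ge\min F$. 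Then multiply through by $rp/(r^{-1}-2\rho)=r^2p/(1-2\rho r)$ and convert $\norm[2(p-1)]{z_t-x_t}$ into $r^2 d(0,\partial\moreaup[r]{x_t})^2$. This yields \cref{eq:cp_conv_1}. One should check that the factor $p$ attached to the $c_3$ and $c_4$ terms is absorbed consistently with the stated constants; I expect it cancels exactly as in \cref{thm:ssgd}.

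For \cref{eq:cp_conv_2}, set $r=1/(4\rho)$, so that $1-2\rho r=1/2$, and $\gamma_t=(t+1)^{-1/p}$. Divide by $\sum_t\gamma_t$ and use that $t^*$ is sampled proportionally to $\gamma_t$. \Cref{fact:sum_step_size}~(iii) gives $\sum_t\gamma_t\ge\frac{p}{p-1}((T+2)^{1-1/p}-1)$, and (ii) gives $\sum_t\gamma_t^p\le\ln(T+1)+1$.

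The new term is $\sum_t\gamma_t^{p/(2-p)}=\sum_t(t+1)^{-1/(2-p)}$. For $p\in(1,2)$ we have $\beta=1/(2-p)>1$, so by \cref{fact:sum_step_size}~(i) this sum is bounded by $\beta/(\beta-1)=1/(p-1)$. This is why the statement restricts to $p<2$. The resulting constant, multiplied by $(p-1)/p$ from the denominator, becomes $c_6$ after evaluating $c_3$ at $r=1/(4\rho)$, i.e.\ $r^{-1}=4\rho$ and $r^{-1}-2\rho=2\rho$. Matching the exact powers of $2$ and $\rho$ in $c_6$ is bookkeeping. I expect this to be the only fiddly part, so I would recompute $c_3$ with $r^{-p/(2-p)}=(4\rho)^{p/(2-p)}$ and $r^{-2(p-1)/(2-p)}=(4\rho)^{2(p-1)/(2-p)}$, and then combine the factors $4\rho$ and $2\rho$ carefully.
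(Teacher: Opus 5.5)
Your proposal is correct and follows the paper's proof essentially step for step. You choose $z_t$ via \cref{thm:generalized_moreau_paraweak}~(iii), combine the envelope inequality with \cref{lem:cp_descent_lem}, telescope, and then specialize to $r=1/(4\rho)$ and $\gamma_t=(t+1)^{-1/p}$, using $\sum_t (t+1)^{-1/(2-p)}\le 1/(p-1)$. The bookkeeping you flag does close: the factor $p$ cancels exactly as in \cref{thm:ssgd}, and $c_6=8\rho c_3/p$ at $r^{-1}=4\rho$ reproduces the stated constant, because $1+\frac{p}{2-p}=\frac{2}{2-p}$ and $1+\frac{4(p-1)}{2-p}=\frac{3p-2}{2-p}$.
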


\begin{proof}
    For any $t \in [0, T]$, by \cref{thm:generalized_moreau_paraweak}~(iii), one can pick $z_t \in \gproxp[r]{x_t}$ such that
    \begin{equation}\label{eq:cp_conv_proof_0}
        d \pa{ 0, \moreaup[r]{x_t} } = r^{-1} \norm[p-1]{x_t-z_t}.
    \end{equation}
    
    By definition of the generalized Moreau envelope, together with \cref{lem:cp_descent_lem}, one directly obtains, after rearranging, taking the full expectation, and taking the sum from $t=0$ to $T$:
    \begin{align*}
        & \frac{1-2\rho r}{r^2 p} \E{ \sum_{t=0}^T \gamma_t \norm[2(p-1)]{z_t-x_t} } \nonumber \\
        & \leq \moreaup[r]{x_0} - \min F + \frac{c_3}{rp} \sum_{t=0}^T \gamma_t ^{\frac{p}{2-p}} + \frac{c_4 L^p}{rp} \sum_{t=0}^T \gamma_t^P
    \end{align*}

    Then, using \cref{eq:cp_conv_proof_0} gives \cref{eq:cp_conv_1}. Dividing both sides of \cref{eq:cp_conv_1} by $\sum_{t=0}^T \gamma_t$, letting $r=1/4\rho$, then using the bounds on the sum of step size at the end of the proof of \cref{thm:ssgd}, together with:
    \begin{align*}
        \sum_{t=0}^T \gamma_t^{\frac{p}{2-p}} = \sum_{t=0}^T (t+1)^{-\frac{1}{2-p}}
        \leq \frac{1}{p-1}
    \end{align*}
    proves \cref{eq:cp_conv_2}.
\end{proof}

\begin{remark}\label{remark:recover_weakly_convex}
    Some remarks are as follows:
    \begin{itemize}
        \item[(i)] With the use of a constant step size $\gamma_t \equiv \gamma = cT^{-1/p}$, one can derive \cref{eq:cp_conv_2} with smaller constants and without the logarithmic factor, given the knowledge of the horizon $T$ and other problem-dependent constants.
        \item[(ii)] The convergence rate established in \cref{thm:conv_psgd} is robust to the choice of step size, in the sense that no condition on the step size is imposed, but in contrast, we need to restrict $p$ to be in $(1, 2)$. In the weakly convex case $p=2$, one can notice that the constants $c_3$ and $c_6$ tend to $+\infty$, while $\sum_{t=0}^T \gamma_t^{p/(2-p)} \to 0$. Hence, if one adopts the convention $0.(+\infty)=0$, the rate $\mathcal{O}(T^{-1/2})$ for the weakly convex case, as in \cite{davis_stochastic_2019}, can be recovered. Another way to recover the weakly convex rate is to impose the assumption $\gamma_t/r \leq 1$, which is similar to the assumption imposed in \cite[Theorem 3.4]{davis_stochastic_2019}. With this assumption, the case $\alpha_t \leq 0$ in the proof of \cref{lem:cp_descent_lem} will not happen if $p=2$, hence the second term in \cref{eq:cp_conv_1} will not appear.
    \end{itemize}
\end{remark}

\section{Stochastic model-based paraconvex optimization} The previous section establishes the convergence of the stochastic proximal subgradient methods for minimizing a $(\rho, p)$-paraconvex function. In this section, we study the stochastic model-based framework that encompasses a wider class of algorithms for minimizing the class of $(\rho, \eta, p)$-paraconvex functions. We will consider the problem
\begin{equation}\label{eq:cp_prob_model_based}
\min_{x\in\R^n} F(x)\coloneqq f(x) + \varphi(x),
\end{equation}
where $\varphi: \R^n \to \R \cup \set{ +\infty }$ is a proper l.s.c. function (not necessarily convex), and $f: \R^n \to \R \cup \set{+\infty}$ a proper l.s.c. function. We assume that the access to $f$ is through the following \emph{stochastic one-sided model}.

\begin{assumption}[stochastic one-sided model]\label[assumption]{assump:stochastic_one_sided_model}
    Fix a probability space $\left( \Omega, \mathcal{F}, \pi \right)$, and equip $\R^n$ with the Borel $\sigma$-algebra. We assume that there exist positive reals $p \in (1, 2], \nu \in (0, 1], \tau, \eta, L_1, L_2 \in \R^+$ such that the followings hold:
    \begin{itemize}
    \item[(i)] We can draw i.i.d. realizations $\xi_1, \xi_2, \dots$ from $\pi$.
    \item[(ii)] There exists an open set $U \supset \dom{\varphi}$ on which $f$ is finite, and a measurable function $(x, y, \xi) \mapsto f_x(y, \xi)$, defined on $U \times U \times \Omega$ satisfying
    \begin{align}
        \E[\xi]{ f_x(x, \xi) } &= f(x) \quad \forall x \in U, \label{eq:stochastic_one_sided_model_1} \\
        \E[\xi]{ f_x(y, \xi) - f(y) } & \leq \frac{\tau}{p} \norm[p]{x-y} \quad \forall x,y \in U. \label{eq:stochastic_one_sided_model_2}
    \end{align}
    \item[(iii)] For all $x \in U$, and a.e. $\xi \sim \pi$, the function $f_x(., \xi) + \varphi(.)$ is $\eta$-weakly convex.
    \item[(iv)] There exists measurable functions $\mathcal{L}_1, \mathcal{L}_2: \Omega \to \R^+$ such that, for all $x, y \in U$, and a.e. $\xi \sim \pi$:
    \begin{align}
        & \pa{ \E[\xi]{ \mathcal{L}_1(\xi)^2} }^{1/2} \leq L_1,\quad \pa{ \E[\xi]{\mathcal{L}_2(\xi)^{\nu+1}} }^{1/(\nu+1)} \leq L_2, \nonumber \\
        & f_x(x, \xi) - f_x(y, \xi)  \leq \mathcal{L}_1(\xi) \norm[]{x-y} + \mathcal{L}_2 (\xi) \norm[\nu]{x-y} . \label{eq:stochastic_one_sided_model_3}
    \end{align}
    \end{itemize}
\end{assumption}

Our assumptions on the stochastic one-sided model are weaker than those of \cite[Assumption B]{davis_stochastic_2019} in three aspects: (i) we do not require $f$ to be locally Lipschitz; (ii) \cref{eq:stochastic_one_sided_model_2} permits a weaker approximation guarantee than \cite[Assumption B2]{davis_stochastic_2019}; and (iii) \cref{eq:stochastic_one_sided_model_3} does not necessarily require the model to be Lipschitz in the sense of \cite[Assumption B4]{davis_stochastic_2019}.


The following lemma establishes the paraconvexity of the objective function $F$, and the continuity of $f$ over $U$ under \cref{assump:stochastic_one_sided_model}.
\begin{lemma}\label{lem:model_based_lem_1}
    Under \cref{assump:stochastic_one_sided_model}, $F$ is $(2\tau, \eta, p)$-paraconvex, and, for all $x, y \in U$,
    \begin{equation}\label{eq:model_based_lipschitz}
        | f(x) -f(y) | \leq L_1 \norm[]{x-y} + L_2 \norm[\nu]{x-y} + \frac{\tau}{p} \norm[p]{x-y}.
    \end{equation}
\end{lemma}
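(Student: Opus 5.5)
The plan is to prove the two claims separately, beginning with the continuity estimate \cref{eq:model_based_lipschitz}, since it only uses \cref{eq:stochastic_one_sided_model_1}, \cref{eq:stochastic_one_sided_model_2} and \cref{eq:stochastic_one_sided_model_3}.

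\textbf{Continuity estimate.} Fix $x,y\in U$. By \cref{eq:stochastic_one_sided_model_1} and \cref{eq:stochastic_one_sided_model_2},
\begin{equation*}
f(x)-f(y)=\E[\xi]{f_x(x,\xi)}-f(y)\le \E[\xi]{f_x(x,\xi)-f_x(y,\xi)}+\frac{\tau}{p}\norm[p]{x-y}.
\end{equation*}
Taking expectations in \cref{eq:stochastic_one_sided_model_3} bounds the middle term by $\E{\mathcal{L}_1}\norm{x-y}+\E{\mathcal{L}_2}\norm[\nu]{x-y}$. Jensen's (or Hölder's) inequality gives $\E{\mathcal{L}_1}\le L_1$ and $\E{\mathcal{L}_2}\le L_2$. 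Exchanging the roles of $x$ and $y$ then yields the absolute-value bound.

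\textbf{Paraconvexity.} The natural approach is the subgradient characterization in \cref{lem:charac_paraweak}(ii), but $\varphi$ is nonconvex and subgradients of $F$ are awkward to handle here. I would instead work directly with the convexity inequality \cref{eq:para-weak-convexity}. Fix $x,y\in\dom\varphi$ (otherwise the right-hand side is $+\infty$), $t\in[0,1]$, and set $z=tx+(1-t)y$. If $z\notin U$, I would first need $z\in U$; I would argue that the inequality is trivial when $F(z)=+\infty$ is impossible to bound, or assume $\dom\varphi$ convex. I expect this to be a minor technical point: since $f_z(\cdot,\xi)+\varphi$ is weakly convex for $z\in U$, it likely forces $\varphi$ to have convex domain. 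The key step is to use the model centered at $z$. By \cref{eq:stochastic_one_sided_model_1},
\begin{equation*}
F(z)=\E{f_z(z,\xi)+\varphi(z)}.
\end{equation*}
By $\eta$-weak convexity from (iii),
\begin{equation*}
f_z(z,\xi)+\varphi(z)\le t\bigl(f_z(x,\xi)+\varphi(x)\bigr)+(1-t)\bigl(f_z(y,\xi)+\varphi(y)\bigr)+\frac{\eta t(1-t)}{2}\norm[2]{x-y}.
\end{equation*}
Taking expectations and applying \cref{eq:stochastic_one_sided_model_2} gives $\E{f_z(x,\xi)}\le f(x)+\frac{\tau}{p}\norm[p]{z-x}$, and similarly for $y$. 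Since $\norm{z-x}=(1-t)\norm{x-y}$ and $\norm{z-y}=t\norm{x-y}$, the error term becomes
\begin{equation*}
\frac{\tau}{p}\bigl(t(1-t)^p+(1-t)t^p\bigr)\norm[p]{x-y}.
\end{equation*}

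\textbf{Main obstacle.} The remaining work is to bound $t(1-t)^p+(1-t)t^p=t(1-t)\bigl((1-t)^{p-1}+t^{p-1}\bigr)$. For $p\in(1,2]$, concavity of $s\mapsto s^{p-1}$ gives $(1-t)^{p-1}+t^{p-1}\le 2\cdot 2^{1-p}=2^{2-p}\le 2$. This yields the constant $2\tau$, which matches the claimed $(2\tau,\eta,p)$-paraconvexity, and in fact gives the sharper $2^{2-p}\tau$. Measurability and integrability of $f_z(x,\xi)$, needed to justify splitting the expectations, I would take as implicit in \cref{assump:stochastic_one_sided_model}.
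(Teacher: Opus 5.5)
Your proposal is correct and follows essentially the same route as the paper: paraconvexity of $F$ comes from writing $F(tx+(1-t)y)$ through the model centered at the convex combination, then applying the $\eta$-weak convexity of $f_{z}(\cdot,\xi)+\varphi$ and the one-sided bound \eqref{eq:stochastic_one_sided_model_2}; the continuity estimate comes from adding the expectations of \eqref{eq:stochastic_one_sided_model_2} and \eqref{eq:stochastic_one_sided_model_3}. The differences are minor. The paper bounds $t(1-t)^p+(1-t)t^p\le 2t(1-t)$ via $t^p\le t$ and $(1-t)^p\le 1-t$, while your concavity argument gives the sharper constant $2^{2-p}\tau$. Your remark that weak convexity of $f_z(\cdot,\xi)+\varphi$ forces $\mathrm{dom}\,\varphi$ to be convex, so that $z\in\mathrm{dom}\,\varphi\subset U$, settles a point the paper passes over silently.
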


\begin{proof}
    Fix arbitrary $x, y \in \dom \varphi$, and a real $t \in [0, 1]$, then let $\hat{x} = tx+(1-t)y$. Using the definition of paraconvexity, together with \cref{assump:stochastic_one_sided_model}~(ii), one successively deduce:
    \begin{subequations}
        \begin{align}
            &F(\hat{x}) = f(\hat{x}) + \varphi(\hat{x}) \nonumber \\
            &= \E[\xi]{ f_{\hat{x}}(\hat{x}, \xi) + \varphi(\hat{x}) } \label{eq:model_based_lem_1_proof_1} \\
            & \leq t \E[\xi]{ f_{\hat{x}}(x, \xi) + \varphi(x) } 
            + (1-t) \E[\xi]{ f_{\hat{x}}(y, \xi) + \varphi(y) }
            + \frac{\eta t(1-t)}{2} \norm[2]{x-y} \label{eq:model_based_lem_1_proof_2} \\
            & \leq t F(x) + (1-t) F(y) + \frac{ \tau \pa{ (1-t)^p t + (1-t)t^p } }{p} \norm[p]{x-y} \nonumber \\ 
            &\quad + \frac{\eta t(1-t)}{2} \norm[2]{x-y} \label{eq:model_based_lem_1_proof_3} \\
            & \leq t F(x) + (1-t) F(y) + \frac{2\tau t(1-t)}{p} \norm[p]{x-y} + \frac{\eta t(1-t)}{2} \norm[2]{x-y} \label{eq:model_based_lem_1_proof_4},
        \end{align}
    \end{subequations}
    where \cref{eq:model_based_lem_1_proof_1} uses \cref{eq:stochastic_one_sided_model_1}~(ii), \cref{eq:model_based_lem_1_proof_2} follows from the weak convexity of $f_{\hat{x}}(., \xi) + \varphi(.)$, \cref{eq:model_based_lem_1_proof_3} uses \cref{eq:stochastic_one_sided_model_2}, and \cref{eq:model_based_lem_1_proof_4} uses $(1-t)^p \leq 1-t$ and $t^p \leq t$ for $t \in [0, 1]$ and $p > 1$. To prove the second statement, taking expectation of \cref{eq:stochastic_one_sided_model_2} and \cref{eq:stochastic_one_sided_model_3} gives:
    \begin{align*}
        \E[\xi]{ f_x(y, \xi) } - f(y) \leq \frac{\tau}{p} \norm[p]{x-y}, \quad 
        f(x) - \E[\xi]{f_x(y, \xi)} \leq L_1 \norm[]{x-y} + L_2 \norm[\nu]{x-y} .
    \end{align*}

    Adding the two above inequalities yields the desired result.
\end{proof}

\begin{algorithm}
\caption{Stochastic model-based method}
\label{alg:model_based}
\begin{algorithmic}[1]
\REQUIRE{Initial model $x_0 \in \dom{\varphi}$, $\brho > 2(2\tau+\eta)$, a sequence $\{\beta_t\}_{t \in \N} \subseteq (\trho, +\infty) $, $\trho$ in \cref{eq:model_based_constant_6}}
\FOR{$t \in [0, T]$}
\STATE{Draw $\xi_t \overset{i.i.d.}{\sim} \pi$}
\STATE{Update $x_{t+1} = \margmin{ \varphi(x) + f_{x_t}(x, \xi_t) + \frac{\beta_t}{2} \norm[2]{x-x_t} } $}
\ENDFOR
\STATE{Sample $t^*$ according to $\mathbb{P}(t^*=t) \propto \frac{\min \set{ \brho-\eta,~ \frac{2(\brho-\tau)}{p} }}{\beta_t-\eta}$}
\RETURN $x_{t^*}$
\end{algorithmic}
\end{algorithm}

\subsection{Convergence analysis}

The optimization algorithm studied in this section is based on the stochastic one-sided model satisfying \cref{assump:stochastic_one_sided_model}, and is presented in \cref{alg:model_based}. The analysis relies on the generalized Moreau envelope and the generalized proximal mapping with the kernel $h(t) = (1/p)t^p + (1/2)t^2$ satisfying the condition \cref{eq:cond_h} with $c = 2$. Henceforth, let $z_t = \gproxp[1/\bar\rho][F]{x_t}$ with $\bar\rho > 2(2\tau+\eta)$. The convergence rate is established based on the following two descent lemmas.

\begin{lemma}
    Under \cref{assump:stochastic_one_sided_model}, with constants $A_{\brho, \eta}$ and $B_{\brho, \tau, p}$ defined in \cref{eq:model_based_constant_1}, $ C_{\brho, \tau, p}$, $D$ and $ E_{\nu} $ defined in \cref{eq:model_based_constant_2,eq:model_based_constant_3,eq:model_based_constant_4}, and
    \begin{align}
        \trho = \begin{cases}
            \brho & \text{if} ~ p \in (1, 2), \\
            2\brho+\tau & \text{if} ~ p = 2,
        \end{cases} \label{eq:model_based_constant_6}
    \end{align}
    the following holds:
    \begin{align*}
        & \E[t]{\norm[2]{z_t-x_{t+1}}} \nonumber \\
        & \leq \norm[2]{z_t-x_t} - (\beta_t-\eta)^{-1} \left( A_{\brho, \eta} \norm[2]{z_t-x_t} + B_{\brho, \tau, p} \norm[p]{z_t-x_t} \right) \nonumber \\
        & \quad + (\beta_t-\eta)^{-1} \pa{ C_{\brho, \tau, p} (\beta_t-\trho)^{-\frac{p}{2-p}} + D L_1^2 (\beta_t-\trho)^{-1} + E_{\nu} L_2^{\frac{2}{2-\nu}} (\beta_t-\trho)^{-\frac{\nu}{2-\nu}} }.
    \end{align*}
\end{lemma}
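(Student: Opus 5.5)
This is the Davis--Drusvyatskiy model-based argument, adapted to the kernel $h(t)=t^p/p+t^2/2$. The engine is the strong convexity of the subproblem. For a.e. $\xi_t$, the function
\[
\Phi_t(x) := \varphi(x)+f_{x_t}(x,\xi_t)+\frac{\beta_t}{2}\norm[2]{x-x_t}
\]
is $(\beta_t-\eta)$-strongly convex by \cref{assump:stochastic_one_sided_model}~(iii), and $x_{t+1}$ is its minimizer. Testing this at $z_t$ gives
\[
\Phi_t(z_t) \ge \Phi_t(x_{t+1}) + \frac{\beta_t-\eta}{2}\norm[2]{z_t-x_{t+1}}.
\]
Multiplying by $2(\beta_t-\eta)^{-1}$ expresses $\norm[2]{z_t-x_{t+1}}$ through $\norm[2]{z_t-x_t}$ plus the model gap
\[
f_{x_t}(z_t,\xi_t)+\varphi(z_t)-f_{x_t}(x_{t+1},\xi_t)-\varphi(x_{t+1})-\frac{\beta_t}{2}\norm[2]{x_{t+1}-x_t}.
\]
The factor $(\beta_t-\eta)^{-1}$ in front of every correction term in the statement comes from this step.

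\textbf{Step 1: bound the model gap.} I would insert $\pm f_{x_t}(x_t,\xi_t)$ and split the gap into two pieces.

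The first piece is $f_{x_t}(z_t,\xi_t)-f_{x_t}(x_t,\xi_t)$. Taking $\E[t]{\cdot}$ and using \cref{eq:stochastic_one_sided_model_1,eq:stochastic_one_sided_model_2}, it becomes at most $f(z_t)-f(x_t)+\frac{\tau}{p}\norm[p]{z_t-x_t}$. This piece is deterministic given the past.

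The second piece is $f_{x_t}(x_t,\xi_t)-f_{x_t}(x_{t+1},\xi_t)$, together with $\varphi(x_t)-\varphi(x_{t+1})$. I bound it by \cref{eq:stochastic_one_sided_model_3}, which gives $\mathcal{L}_1\delta+\mathcal{L}_2\delta^\nu$ with $\delta=\norm{x_{t+1}-x_t}$.

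The $\varphi$ difference cannot be controlled by Lipschitzness, since $\varphi$ is not even convex. So I pair it with $F$: the first piece plus $\varphi(z_t)-\varphi(x_t)$ yields $F(z_t)-F(x_t)$. By the definition of $z_t$ as a generalized proximal point,
\[
F(z_t)-F(x_t)\le -\brho\Bigl(\tfrac1p\norm[p]{z_t-x_t}+\tfrac12\norm[2]{z_t-x_t}\Bigr).
\]
This is the source of the negative $A_{\brho,\eta}\norm[2]{\cdot}+B_{\brho,\tau,p}\norm[p]{\cdot}$ terms, with $B$ involving $\brho-\tau$. To get the full $-\beta_t$ scaling cleanly, I would instead evaluate strong convexity at the convex combination $\hat x=\lambda z_t+(1-\lambda)x_t$ with $\lambda\approx(\brho-\eta)/(\beta_t-\eta)$, as in Davis--Drusvyatskiy. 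I expect this to be necessary to produce $A$ in the stated form.

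\textbf{Step 2: absorb the stochastic terms.} The remaining terms are $\mathcal{L}_1\delta+\mathcal{L}_2\delta^\nu-\frac{\beta_t-\trho}{2}\delta^2$. The part $\frac{\beta_t-\trho}{2}\delta^2$ is what is left of the $\frac{\beta_t}{2}\delta^2$ after some of it is spent elsewhere. I maximize over $\delta$ pointwise using Young's inequality:
\[
\mathcal{L}_1\delta\le \frac{\mathcal{L}_1^2}{\beta_t-\trho}+\frac{\beta_t-\trho}{4}\delta^2,
\]
and, with exponents $2/\nu$ and $2/(2-\nu)$,
\[
\mathcal{L}_2\delta^\nu\lesssim \mathcal{L}_2^{2/(2-\nu)}(\beta_t-\trho)^{-\nu/(2-\nu)}.
\]
Taking expectations and applying Jensen's inequality, using $\frac{2}{2-\nu}\le \nu+1$, turns these into $L_1^2$ and $L_2^{2/(2-\nu)}$. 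This produces the $D$ and $E_\nu$ terms.

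\textbf{Main obstacle: the paraconvex error.} When passing from $\hat x$ or $z_t$ back to $x_{t+1}$, the error $\frac{\tau}{p}\norm[p]{\cdot}$ from \cref{eq:stochastic_one_sided_model_2} is only $p$-homogeneous, so it cannot be absorbed by a quadratic term for small distances. My plan is the following Young split:
\[
\frac{\tau}{p}s^p\le \frac{\beta_t-\trho}{4}s^2 + C\,(\beta_t-\trho)^{-\frac{p}{2-p}},
\]
where the constant is obtained by maximizing $as^p-bs^2$. This yields the $C_{\brho,\tau,p}$ term.

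For $p=2$ the exponent $\frac{p}{2-p}$ blows up. There, instead, the term is directly quadratic and is absorbed into the $\beta_t$ quadratic, which explains the shifted threshold $\trho=2\brho+\tau$ in that case. I would carry out the two cases $p<2$ and $p=2$ separately at exactly this step. The main bookkeeping is to verify that the quadratic budget $\frac{\beta_t}{2}$ suffices for all the absorptions, which is guaranteed by $\beta_t>\trho$.
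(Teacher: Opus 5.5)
There is a genuine gap in Step 1: the term $\varphi(x_t)-\varphi(x_{t+1})$ is never controlled. You split the model gap through $x_t$, move $\varphi(z_t)-\varphi(x_t)$ into $F(z_t)-F(x_t)$, and bound that by the generalized-prox inequality with competitor $x_t$. The second piece, however, still carries $\varphi(x_t)-\varphi(x_{t+1})$. \cref{eq:stochastic_one_sided_model_3} bounds only differences of the model $f_{x_t}(\cdot,\xi_t)$, not of $\varphi$. The function $\varphi$ is merely proper l.s.c., with no Lipschitz constant among the data of the statement, and there is no descent property $F(x_{t+1})\le F(x_t)$ to fall back on. The convex-combination device does not address this. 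It is also not needed for $A_{\brho,\eta}$: testing strong convexity directly at $z_t$ already gives the coefficient $\frac{\beta_t-\brho}{\beta_t-\eta}=1-\frac{\brho-\eta}{\beta_t-\eta}$ in front of $\|z_t-x_t\|^2$.

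The missing idea is to use $x_{t+1}$, not $x_t$, as the competitor in the definition of $z_t$. The steps are:
\begin{enumerate}
\item Replace $-f_{x_t}(x_{t+1},\xi_t)$ by $-f_{x_t}(x_t,\xi_t)+\mathcal{L}_1(\xi_t)\delta+\mathcal{L}_2(\xi_t)\delta^\nu$ via \cref{eq:stochastic_one_sided_model_3}.
\item Take $\E[t]{\cdot}$ to obtain $-f(x_t)$.
\item Pass to $-f(x_{t+1})$ using the continuity estimate \cref{eq:model_based_lipschitz} from \cref{lem:model_based_lem_1}, at cost $L_1\delta+L_2\delta^\nu+\frac{\tau}{p}\delta^p$.
\end{enumerate}
Now $-f(x_{t+1})-\varphi(x_{t+1})=-F(x_{t+1})$, and the following inequality closes the argument:
\[
F(z_t)-F(x_{t+1})\le \frac{\brho}{p}\bigl(\delta^p-\|z_t-x_t\|^p\bigr)+\frac{\brho}{2}\bigl(\delta^2-\|z_t-x_t\|^2\bigr).
\]

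This is also the true source of the structure you attribute to the ``paraconvex error.'' The term $\frac{\tau}{p}\|z_t-x_t\|^p$ is simply absorbed into $B_{\brho,\tau,p}$. The $\delta^p$ terms instead come from two places: $\frac{\tau}{p}\delta^p$ from \cref{eq:model_based_lipschitz} and $\frac{\brho}{p}\delta^p$ from the prox comparison. The $\frac{\brho}{2}\delta^2$ shrinks the quadratic budget $\frac{\beta_t}{2}\delta^2$ to $\frac{\beta_t-\brho}{2}\delta^2$. For $p<2$, Young's inequality then gives $C_{\brho,\tau,p}$. For $p=2$, the term $\frac{\brho+\tau}{2}\delta^2$ leaves $\frac{\beta_t-2\brho-\tau}{2}\delta^2$, which is why $\trho=2\brho+\tau$.

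Your Step 2 (pointwise Young, then Jensen using $\frac{2}{2-\nu}\le\nu+1$) is a valid alternative to the paper's H\"older bound in terms of $\delta_t=\bigl(\E[t]{\|x_{t+1}-x_t\|^2}\bigr)^{1/2}$.
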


\begin{proof}
    First, by noticing that the function $x \mapsto \model[x_t]{x} + \varphi(x) + \frac{\beta_t}{2}\norm[2]{x-x_t} $ is $(\beta_t - \eta)$-strongly convex with $x_{t+1}$ its minimizer:
    \begin{align*}
        & \model[x_t]{x_{t+1}} + \varphi(x_{t+1}) + \frac{\beta}{2}\norm[2]{x_{t+1}-x_t} + \frac{\beta_t - \eta}{2}\norm[2]{z_t-x_{t+1}} \nonumber \\
        & \leq \model[x_t]{z_t} + \varphi(z_t) + \frac{\beta_t}{2}\norm[2]{z_t-x_t}.
    \end{align*}

    Rearranging and taking the conditional expectation, one successively deduces:
    \begin{subequations}
        \begin{align}
            & \E[t]{ \frac{\beta_t-\eta}{2}\norm[2]{z_t-x_{t+1}} + \frac{\beta_t}{2}\norm[2]{x_{t+1}-x_t} - \frac{\beta_t}{2} \norm[2]{z_t-x_t} } \nonumber \\
            & \leq \E[t]{ \model[x_t]{z_t} + \varphi(z_t) - \model[x_t]{x_{t+1}} - \varphi(x_{t+1}) } \nonumber \\
            & \leq F(z_t) - \E[t]{ \model[x_t]{x_{t+1}} + \varphi(x_{t+1}) } + \frac{\tau}{p}\norm[p]{z_t-x_t} \label{eq:model_based_descent_lem_1} \\
            & \leq F(z_t) - \E[t]{ \model[x_t]{x_t} + \varphi(x_{t+1}) } + \frac{\tau}{p} \norm[p]{z_t-x_t} \nonumber \\
            & \quad + \E[t]{ \mathcal{L}_1 (\xi_t) \norm[]{x_{t+1}-x_t} + \mathcal{L}_2(\xi_t) \norm[\nu]{x_{t+1}-x_t} } \label{eq:model_based_descent_lem_2} \\
            & \leq \E[t]{F(z_t) - F(x_{t+1}} + \frac{\tau}{p} \norm[p]{z_t-x_t} \nonumber \\
            & \quad + \E[t]{ \mathcal{L}_1 (\xi_t) \norm[]{x_{t+1}-x_t} + \mathcal{L}_2(\xi_t) \norm[\nu]{x_{t+1}-x_t} } \nonumber \\
            & \quad + \E[t]{ L_1 \norm[]{x_{t+1}-x_t} + L_2 \norm[\nu]{x_{t+1}-x_t} + \frac{\tau}{p} \norm[p]{x_{t+1}-x_t} } \label{eq:model_based_descent_lem_3} \\
            & \leq \E[t]{F(z_t) - F(x_{t+1})} + \frac{\tau}{p} \norm[p]{z_t-x_t} + \frac{\tau}{p} \E[t]{\norm[p]{x_{t+1}-x_t}} \nonumber \\
            & \quad + L_1 \pa{ \pa{ \E[t]{ \norm[2]{x_{t+1}-x_t} } }^{1/2} + \E[t]{\norm[]{x_{t+1}-x_t}} } \nonumber \\
            & \quad + L_2 \pa{ \pa{ \E[t]{ \norm[\nu+1]{x_{t+1}-x_t} } }^{\frac{\nu}{\nu+1}} + \E[t]{\norm[\nu]{x_{t+1}-x_t}} }, \label{eq:model_based_descent_lem_4}
        \end{align}
    \end{subequations}
    where \cref{eq:model_based_descent_lem_1} follows from \cref{eq:stochastic_one_sided_model_2}, \cref{eq:model_based_descent_lem_2} follows from \cref{eq:stochastic_one_sided_model_3}, \cref{eq:model_based_descent_lem_3} follows from \cref{eq:model_based_lipschitz}, and finally \cref{eq:model_based_descent_lem_4} uses Hölder's inequality. Then, the definition for $z_t$ gives:
    \begin{align*}
        & F(z_t) + \frac{\brho}{p}\norm[p]{z_t-x_t} + \frac{\brho}{2}\norm[2]{z_t-x_t} \\
        & \leq F(x_{t+1}) + \frac{\brho}{p}\norm[p]{x_{t+1}-x_t} + \frac{\brho}{2}\norm[2]{x_{t+1}-x_t}.
    \end{align*}

    Letting $\delta_t = \pa{ \E[t]{ \norm[2]{x_{t+1}-x_t} } }^{1/2}$, by Jensen's inequality, one has:
    \begin{align*}
        & \E[t]{\norm[]{x_{t+1}-x_t}} \leq \delta_t, \quad 
        \E[t]{\norm[2]{x_{t+1}-x_t}} = \delta_t^2, \\
        & \E[t]{ \norm[p]{x_{t+1}-x_t} } \leq \delta_t^p, \quad \E[t]{ \norm[\nu]{x_{t+1}-x_t} } \leq \delta_t^{\nu},\\
        & \pa{ \E[t]{\norm[\nu+1]{x_{t+1}-x_t} } }^{\frac{\nu}{\nu+1}} \leq \delta_t^{\nu}.
    \end{align*}

    Plugging back into \cref{eq:model_based_descent_lem_4}, after rearranging and dividing two sides by $(\beta_t-\eta)/2$, one arrives at:
    \begin{align}\label{eq:model_based_descent_lem_6}
        & \E[t]{\norm[2]{z_t-x_{t+1}}} \nonumber \\
        & \leq \norm[2]{z_t-x_t} - \frac{A_{\brho, \eta}}{\beta_t - \eta} \norm[2]{z_t-x_t} - \frac{B_{\brho, \tau, p}}{\beta_t-\eta} \norm[p]{z_t-x_t} + \mathcal{P}(\delta_t)(\beta_t-\eta)^{-1},
    \end{align}
    with
    \begin{align}
       & A_{\brho, \eta} = \brho - \eta, \quad B_{\brho, \tau, p} = \frac{2(\brho-\tau)}{p}, \label{eq:model_based_constant_1} \\
       & \mathcal{P}(\delta_t) = - (\beta_t-\brho) \delta_t^2 + \frac{2(\brho+\tau)}{p} \delta_t^p + 4L_1\delta_t + 4L_2 \delta_t^{\nu} \nonumber.
    \end{align}

    Now we will try to derive an upper bound for $\mathcal{P}(\delta_t)$ that does not depend on $\delta_t$. Thanks to Young's inequality:
    \begin{align*}
        ab \leq \frac{a^p}{p} + \frac{b^q}{q}, \quad a, b > 0, ~ p, q > 1, ~ \frac{1}{p} + \frac{1}{q} = 1,
    \end{align*}
    one can have, if $p \in (1, 2)$,
    \begin{align*}
        \frac{2(\brho+\tau)}{p} \delta_t^p
        &\leq \frac{\beta_t-\brho}{3} \delta_t^2 + (2-p) p^{-1} 3^{\frac{p}{2-p}} (\brho+\tau)^{\frac{2}{2-p}} (\beta_t-\brho)^{-\frac{p}{2-p}}, \\
        4L_1 \delta_t &\leq \frac{\beta_t-\brho}{3}\delta_t^2 + 12L_1^2 (\beta_t-\brho)^{-1} \\
        4L_2 \delta_t^{\nu} & \leq \frac{\beta_t-\brho}{3}\delta_t^2 + (2-\nu) 2^{\frac{2}{2-\nu}} (3\nu)^{\frac{\nu}{2-\nu}} L_2^{\frac{2}{2-\nu}} (\beta_t-\brho)^{-\frac{\nu}{2-\nu}}.
    \end{align*}

    If $p=2$,
    \begin{align*}
        4L_1 \delta_t &\leq \frac{\beta_t-2\brho-\tau}{2} \delta_t^2 + 8 L_1^2 (\beta_t-2\brho-\tau)^{-1} \\
        4L_2 \delta_t^{\nu} &\leq \frac{\beta_t-2\brho-\tau}{2} \delta_t^2 + (2-\nu) 2^{\frac{2}{2-\nu}} (2\nu)^{\frac{\nu}{2-\nu}} (\beta_t-2\brho-\tau)^{-\frac{\nu}{2-\nu}}.
    \end{align*}

    Define the following constants:
    \begin{align}
        C_{\brho, \tau, p} &= \begin{cases}
            (2-p) p^{-1} 3^{\frac{p}{2-p}} (\brho+\tau)^{\frac{2}{2-p}} & \text{if} ~ p \in (1, 2), \\
            0 & \text{if} ~ p = 2,
        \end{cases} \label{eq:model_based_constant_2} \\
        D &= \begin{cases}
            12 & \text{if} ~ p \in (1, 2), \\
            8 & \text{if} ~ p = 2,
        \end{cases} \label{eq:model_based_constant_3} \\
        E_{\nu} &= \begin{cases}
            (2-\nu) 2^{\frac{2}{2-\nu}} (3\nu)^{\frac{\nu}{2-\nu}} & \text{if} ~ p \in (1, 2), \\
            (2-\nu) 2^{\frac{2}{2-\nu}} (2\nu)^{\frac{\nu}{2-\nu}} & \text{if} ~ p = 2,
        \end{cases} \label{eq:model_based_constant_4}
    \end{align}
    then $\mathcal{P}(\delta_t)$ can be upper bounded as:
    \begin{align}\label{eq:model_based_descent_lem_5}
        \mathcal{P}(\delta_t) \leq C_{\brho, \tau, p} (\beta_t-\trho)^{-\frac{p}{2-p}} + D L_1^2 (\beta_t-\trho)^{-1} + E_{\nu} L_2^{\frac{2}{2-\nu}} (\beta_t -\trho)^{-\frac{\nu}{2-\nu}},
    \end{align}
    which, together with \cref{eq:model_based_descent_lem_6} proves the lemma.
\end{proof}

\begin{lemma}\label{lem:model_based_lem_p}
    Under the setting of \cref{lem:model_based_lem_1}, the following holds:
    \begin{align*}
        & \E[t]{ \norm[p]{z_t-x_{t+1}} } \\
        & \leq \norm[p]{z_t-x_t} - \frac{p}{2} (\beta_t-\eta)^{-1} \pa{ A_{\brho, \eta} \norm[p]{z_t-x_t} + B_{\brho, \tau, p} \norm[2(p-1)]{z_t-x_t} } \nonumber \\
    & \quad + (\beta_t-\eta)^{-\frac{p}{2}} \left( (C_{\brho, \tau, p})^{\frac{p}{2}} (\beta_t-\trho)^{-\frac{p^2}{2(2-p)}} + D^{\frac{p}{2}} L_1^p (\beta_t-\trho)^{-p/2} \right. \nonumber \\
        & \qquad\qquad\qquad\qquad \left. + (E_{\nu})^{\frac{p}{2}} L_2^{\frac{p}{2-\nu}} (\beta_t-\trho)^{-\frac{p\nu}{2(2-\nu)}} \right) \\
    & \quad + \frac{p}{2} (\beta_t-\eta)^{-1} G_p \left( A_{\brho, \eta} (B_{\brho, \tau, p})^{\frac{p}{2-p}} (\beta_t-\trho)^{-\frac{p}{2-p}} + (B_{\brho, \tau, p})^{\frac{p}{2-p}} (\beta_t-\trho)^{-\frac{2(p-1)}{2-p}} \right),
    \end{align*}
    with
    \begin{align}
        G_p = \begin{cases}
            1 & \text{if} ~ p \in (1, 2),\\
            0 & \text{if} ~ p = 2.
        \end{cases} \label{eq:model_based_constant_5}
    \end{align}
\end{lemma}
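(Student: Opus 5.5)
The plan is to raise the squared-distance descent inequality of the preceding lemma to the power $p/2$. Write $a=\norm[2]{z_t-x_t}$ and $Q=\E[t]{\norm[2]{z_t-x_{t+1}}}$. Since $s\mapsto s^{p/2}$ is concave for $p\in(1,2]$, Jensen gives
\begin{equation*}
\E[t]{\norm[p]{z_t-x_{t+1}}}\le Q^{p/2}.
\end{equation*}
The preceding lemma bounds $Q$ by $a - \Delta + R$, where
\begin{align*}
\Delta&=(\beta_t-\eta)^{-1}\pa{A_{\brho,\eta}\, a + B_{\brho,\tau,p}\, a^{p/2}},\\
R&=(\beta_t-\eta)^{-1}\pa{C(\beta_t-\trho)^{-\frac{p}{2-p}}+DL_1^2(\beta_t-\trho)^{-1}+E_\nu L_2^{\frac{2}{2-\nu}}(\beta_t-\trho)^{-\frac{\nu}{2-\nu}}}.
\end{align*}
By subadditivity (\cref{useful-ineq}~(ii)) applied twice,
\begin{equation*}
Q^{p/2}\le \pa{\max\set{a-\Delta,0}}^{p/2} + R^{p/2},
\end{equation*}
and splitting $R^{p/2}$ termwise gives exactly the second group of terms in the statement, with the factor $(\beta_t-\eta)^{-p/2}$ and the exponents multiplied by $p/2$.

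It remains to bound $\pa{\max\set{a-\Delta,0}}^{p/2}$. \textbf{Case 1:} $\Delta<a$. Bernoulli (\cref{useful-ineq}~(i)) with exponent $p/2$ gives
\begin{equation*}
(a-\Delta)^{p/2}\le a^{p/2}-\frac p2\Delta a^{p/2-1}.
\end{equation*}
Since $a^{p/2}=\norm[p]{z_t-x_t}$, $a\cdot a^{p/2-1}=\norm[p]{z_t-x_t}$ and $a^{p/2}a^{p/2-1}=\norm[2(p-1)]{z_t-x_t}$, this is precisely the main descent term $-\frac p2(\beta_t-\eta)^{-1}\pa{A\norm[p]{\cdot}+B\norm[2(p-1)]{\cdot}}$. \textbf{Case 2:} $\Delta\ge a$. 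The left side is $0$, and I add and subtract the descent term: $0\le a^{p/2}-\frac p2\Delta a^{p/2-1}+\frac p2\Delta a^{p/2-1}$. It then suffices to bound
\begin{equation*}
\frac p2(\beta_t-\eta)^{-1}\pa{A\norm[p]{z_t-x_t}+B\norm[2(p-1)]{z_t-x_t}}
\end{equation*}
by the last group of terms in the statement.

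Case 2 is the main obstacle. The condition $\Delta\ge a$ must force $\norm{z_t-x_t}$ to be small. In particular $B a^{p/2}\le \Delta(\beta_t-\eta)$, and combined with $a\le\Delta$ one gets, roughly, $a\,(\beta_t-\eta-A)\le B a^{p/2}$. I will use $\beta_t>\trho\ge\brho$, so that $\beta_t-\eta-A=\beta_t-\brho\ge\beta_t-\trho$ for $p<2$. This yields $a^{1-p/2}\le B(\beta_t-\trho)^{-1}$, i.e.
\begin{equation*}
\norm{z_t-x_t}^{2-p}\le B(\beta_t-\trho)^{-1}.
\end{equation*}
Substituting gives
\begin{equation*}
\norm[p]{z_t-x_t}\le B^{\frac p{2-p}}(\beta_t-\trho)^{-\frac p{2-p}},\qquad \norm[2(p-1)]{z_t-x_t}\le B^{\frac{2(p-1)}{2-p}}(\beta_t-\trho)^{-\frac{2(p-1)}{2-p}},
\end{equation*}
which match the $G_p$ terms up to how the power of $B$ is recorded. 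I will need care with this bookkeeping, and may need the crude bound $B^{\frac{2(p-1)}{2-p}}\cdot B\le B^{\frac p{2-p}}$ as written in the statement.

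For $p=2$, Case 2 cannot occur. There $\Delta=(\beta_t-\eta)^{-1}(A+B)a$, and the choice $\trho=2\brho+\tau$ makes $A+B=2\brho-\eta-\tau<\beta_t-\eta$, hence $\Delta<a$. This is why $G_2=0$.
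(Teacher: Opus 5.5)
Your proposal is correct and follows the paper's proof. The paper likewise uses Jensen to reduce to the squared-distance bound of the preceding lemma, then subadditivity plus Bernoulli when $\Delta<a$. When $\Delta\ge a$, it derives $\norm[2-p]{z_t-x_t}\le B_{\brho,\tau,p}(\beta_t-\brho)^{-1}$ and adds and subtracts the descent term, with $p=2$ excluded via $\trho=2\brho+\tau$. The bookkeeping step you hedge on is an identity, not a crude bound: $1+\frac{2(p-1)}{2-p}=\frac{p}{2-p}$, so $B_{\brho,\tau,p}\,(B_{\brho,\tau,p})^{\frac{2(p-1)}{2-p}}=(B_{\brho,\tau,p})^{\frac{p}{2-p}}$ exactly.
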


\begin{proof}
    To simply the notation, let $\mathcal{P}$ be the upper bound of $\mathcal{P}(\delta_t)$ in \cref{eq:model_based_descent_lem_5}. Then, by Jensen's inequality, one has:
    \begin{align}
        & \E[t]{ \norm[p]{z_t-x_{t+1}} }
        \leq \pa{ \E[t]{\norm[2]{z_t-x_{t+1}}} }^{p/2} \nonumber \\
        & \leq \left( \norm[2]{z_t-x_{t}} - (\beta-\eta)^{-1} \left( A_{\brho, \eta} \norm[2]{z_t-x_t} + B_{\brho, \tau, p} \norm[p]{z_t-x_t} \right) \right. \nonumber \\
        & \qquad + \left. (\beta_t-\eta)^{-1} \mathcal{P} \right)^{p/2}, \label{eq:model_based_lem_p}
    \end{align}
    where the last inequality follows from the estimate in \cref{lem:model_based_lem_1}. We distinguish the following two cases:

    \textbf{Case 1.} $ \norm[2]{z_t-x_{t}} - (\beta-\eta)^{-1} \left( A_{\brho, \eta} \norm[2]{z_t-x_t} + B_{\brho, \tau, p} \norm[p]{z_t-x_t} \right) > 0 $. Applying successively the inequalities in \cref{useful-ineq}, one arrives at:
    \begin{subequations}
        \begin{align*}
            &\E[t]{ \norm[p]{z_t-x_{t+1}} } \\
            & \leq \left( \norm[2]{z_t-x_t} - (\beta_t-\eta)^{-1} \left( A_{\brho, \eta} \norm[2]{z_t-x_t} + B_{\brho, \tau, p} \norm[p]{z_t-x_t} \right) \right)^{p/2} \\
            &\quad + (\beta_t-\eta)^{-\frac{p}{2}} \mathcal{P}^{p/2} \\
            & \leq \norm[p]{z_t-x_t} - \frac{p}{2} (\beta_t-\eta)^{-1} \pa{ A_{\brho, \eta} \norm[p]{z_t-x_t} + B_{\brho, \tau, p} \norm[2(p-1)]{z_t-x_t} } \nonumber \\
            & \quad + (\beta_t-\eta)^{-\frac{p}{2}} \left( (C_{\brho, \tau, p})^{\frac{p}{2}} (\beta_t-\trho)^{-\frac{p^2}{2(2-p)}} + D^{\frac{p}{2}} L_1^p (\beta_t-\trho)^{-p/2} \right. \nonumber \\
                & \qquad\qquad\qquad\qquad \left. + (E_{\nu})^{\frac{p}{2}} L_2^{\frac{p}{2-\nu}} (\beta_t-\trho)^{-\frac{p\nu}{2(2-\nu)}} \right)
        \end{align*}
    \end{subequations}

    \textbf{Case 2.} $\norm[2]{z_t-x_{t}} - (\beta-\eta)^{-1} \left( A_{\brho, \eta} \norm[2]{z_t-x_t} + B_{\brho, \tau, p} \norm[p]{z_t-x_t} \right) \leq 0$. This condition, together with the definition of $A_{\brho, \eta}$ in \cref{eq:model_based_constant_1} give:
    \begin{align}\label{eq:model_based_lem_p_1}
        \norm[2-p]{z_t-x_t} \leq \frac{B_{\brho, \tau, p} (\beta_t-\eta)^{-1}}{1-(\beta_t-\eta_t)^{-1} A_{\brho, \eta}} = B_{\brho, \tau, p} (\beta_t-\brho)^{-1} \leq B_{\brho, \tau, p} (\beta_t-\trho)^{-1}
    \end{align}

    Note that, in the case where $p=2$, this case can not happen, since the condition is equivalent to
    \begin{align*}
        \frac{2\brho-\tau-\eta}{\beta_t-\eta} \geq 1,
    \end{align*}
    which contradicts the condition $\beta_t > \trho = 2\brho+\tau$. Therefore, by using the fact that $a\mapsto a^{p/2}$ is non-increasing for $a\geq 0$ and $p \in (1, 2]$,
    from \cref{eq:model_based_lem_p}, one obtains:
    \begin{subequations}
        \begin{align*}
            &\E[t]{\norm[p]{z_t-x_t}}
            \leq \mathcal{P}^{p/2} \\
            & \leq \norm[p]{z_t-x_t} - \frac{p}{2} (\beta_t-\eta)^{-1} \pa{ A_{\brho, \eta} \norm[p]{z_t-x_t} + B_{\brho, \tau, p} \norm[2(p-1)]{z_t-x_t} } \\
            & \quad + \mathcal{P}^{p/2}
            + \frac{p}{2} (\beta_t-\eta)^{-1} \pa{ A_{\brho, \eta} \norm[p]{z_t-x_t} + B_{\brho, \tau, p} \norm[2(p-1)]{z_t-x_t} } \\
            & \leq \norm[p]{z_t-x_t} - \frac{p}{2} (\beta_t-\eta)^{-1} \pa{ A_{\brho, \eta} \norm[p]{z_t-x_t} + B_{\brho, \tau, p} \norm[2(p-1)]{z_t-x_t} } \nonumber \\
            & \quad + (\beta_t-\eta)^{-\frac{p}{2}} \left( (C_{\brho, \tau, p})^{\frac{p}{2}} (\beta_t-\trho)^{-\frac{p^2}{2(2-p)}} + D^{\frac{p}{2}} L_1^p (\beta_t-\trho)^{-p/2} \right. \nonumber \\
                & \qquad\qquad\qquad\qquad \left. + (E_{\nu})^{\frac{p}{2}} L_2^{\frac{p}{2-\nu}} (\beta_t-\trho)^{-\frac{p\nu}{2(2-\nu)}} \right) \\
            & \quad + \frac{p}{2} (\beta_t-\eta)^{-1} G_p \left( A_{\brho, \eta} (B_{\brho, \tau, p})^{\frac{p}{2-p}} (\beta_t-\trho)^{-\frac{p}{2-p}} + (B_{\brho, \tau, p})^{\frac{p}{2-p}} (\beta_t-\trho)^{-\frac{2(p-1)}{2-p}} \right),
        \end{align*}
    \end{subequations}
    where the last inequality follows from the estimate \cref{eq:model_based_lem_p_1} and uses
    \begin{align*}
        G_p = \begin{cases}
            1 & \text{if} ~ p \in (1, 2),\\
            0 & \text{if} ~ p = 2.
        \end{cases}
    \end{align*} 
    
    Combining the results from the two cases proves the lemma.
\end{proof}

We are now ready to prove the convergence of \cref{alg:model_based}.

\begin{theorem}\label{thm:model_based}
    Under \cref{assump:stochastic_one_sided_model}, fix $\brho > 2(2\tau+\eta)$ and a sequence $\set{\beta_t} \in (\trho, +\infty)$, with $\trho$ defined in \cref{eq:model_based_constant_6}, the sequence $\set{x_t}$ generated by \cref{alg:model_based} satisfies:
    \begin{align}\label{eq:model_based_conv_1}
    &\E{ \sum_{t=0}^T \mathcal{A} (\beta_t-\eta)^{-1} d \left(0, \moreaup[1/\brho]{x_t} \right)^2 } \nonumber \nonumber \\
    &\le 2 \brho \pa{ \moreaup[1/\brho]{x_0} - \min F } \nonumber \\
    &\quad+ \brho^2 \mathcal{B} \left( (\beta_t-\trho)^{-\frac{2}{2-p}}
    + L_1^2 (\beta_t-\trho)^{-2}
    + (\beta_t-\trho)^{-\frac{p}{2-p}}
    + L_1^p (\beta_t-\trho)^{-p} \right. \nonumber \\ 
    &\qquad\qquad \left.+ L_2^{\frac{2}{2-\nu}} (\beta_t-\trho)^{-\frac{2}{2-\nu}}
    + L_2^{\frac{p}{2-\nu}} (\beta_t-\trho)^{-\frac{p}{2-\nu}} \right),
    \end{align}
    with constants
    \begin{align*}
        &\mathcal{A} = \min \set{ A_{\brho, \eta}, B_{\brho, \tau, p} }, \\
        &\mathcal{B} = \max \left\{ G_p A_{\brho, \eta} (B_{\brho, \tau, p})^{\frac{p}{2-p}} + C_{\brho, \tau, p}, G_p (B_{\brho, \tau, p})^{\frac{p}{2-p}} + \frac{2}{p} (C_{\brho, \tau, p})^{p/2}, \right. \\ 
        &\qquad\qquad~ \left. 2D, E_{\nu}, \frac{2}{p} (E_{\nu})^{p/2} \right\},
    \end{align*}
    $A_{\brho, \eta}$ and $B_{\brho, \tau, p}$ defined in \cref{eq:model_based_constant_1}, $ C_{\brho, \tau, p}$, $D$ and $ E_{\nu} $ defined in \cref{eq:model_based_constant_2,eq:model_based_constant_3,eq:model_based_constant_4}, $G_p$ defined in \cref{eq:model_based_constant_5}.
\end{theorem}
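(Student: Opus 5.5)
The approach is a Lyapunov argument on the generalized Moreau envelope $\moreaup[1/\brho]{\cdot}$ with kernel $h(t)=t^p/p+t^2/2$, mirroring the proofs of \cref{thm:ssgd} and \cref{thm:conv_psgd}. Since $\brho>2(2\tau+\eta)$ and $F$ is $(2\tau,\eta,p)$-paraconvex by \cref{lem:model_based_lem_1}, the growth condition \cref{eq:growth_cond_1} holds with $a=2\tau+\eta$, and (H) holds with $c=2$. So $r=1/\brho<(ac)^{-1}$, and \cref{thm:generalized_moreau_paraweak}~(iii) applies. For each $t$ I pick $z_t\in\gproxp[1/\brho][F]{x_t}$ attaining
\[
d\bigl(0,\partial\moreaup[1/\brho]{x_t}\bigr)=\brho\,\|\nabla j_h(x_t-z_t)\|=\brho\bigl(\|z_t-x_t\|^{p-1}+\|z_t-x_t\|\bigr).
\]
It follows that
\[
d(0,\partial\moreaup[1/\brho]{x_t})^2\le 2\brho^2\bigl(\|z_t-x_t\|^{2(p-1)}+\|z_t-x_t\|^2\bigr).
\]

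Next I use $z_t$ as a test point in the envelope at $x_{t+1}$:
\[
\E[t]{\moreaup[1/\brho]{x_{t+1}}}\le F(z_t)+\frac{\brho}{p}\E[t]{\|z_t-x_{t+1}\|^p}+\frac{\brho}{2}\E[t]{\|z_t-x_{t+1}\|^2}.
\]
The $2$-power term is bounded by the first descent lemma and the $p$-power term by \cref{lem:model_based_lem_p}. Adding $\frac{\brho}{2}\times$(first lemma) and $\frac{\brho}{p}\times$(\cref{lem:model_based_lem_p}) gives the following. The leading terms reassemble into $\moreaup[1/\brho]{x_t}$. The negative terms are $-\frac{\brho}{2}(\beta_t-\eta)^{-1}$ times
\[
A\|z_t-x_t\|^2+B\|z_t-x_t\|^p+A\|z_t-x_t\|^p+B\|z_t-x_t\|^{2(p-1)},
\]
and this is at least $\mathcal{A}\bigl(\|z_t-x_t\|^2+\|z_t-x_t\|^{2(p-1)}\bigr)$, after dropping the nonnegative $p$-power pieces. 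Combined with the first step, this yields a decrease of at least $\frac{\mathcal{A}}{4\brho}(\beta_t-\eta)^{-1}d(0,\partial\moreaup[1/\brho]{x_t})^2$.

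The error terms require some bookkeeping. Using $\beta_t-\eta>\beta_t-\trho$ (valid since $\trho\ge\brho>\eta$), each factor $(\beta_t-\eta)^{-1}$ or $(\beta_t-\eta)^{-p/2}$ is converted into the corresponding power of $(\beta_t-\trho)^{-1}$. This produces the exponents listed:
\begin{itemize}
\item $-\tfrac{2}{2-p}$ from $C(\beta_t-\trho)^{-p/(2-p)}$ times $(\beta_t-\eta)^{-1}$, and also from the $G_pAB^{p/(2-p)}$ term;
\item $-2$ and $-p$ from the $L_1$ terms;
\item $-\tfrac{p}{2-p}$ from $C^{p/2}$ and from the $G_pB^{p/(2-p)}(\beta_t-\trho)^{-2(p-1)/(2-p)}$ term;
\item $-\tfrac{2}{2-\nu}$ and $-\tfrac{p}{2-\nu}$ from the $L_2$ terms.
\end{itemize}
The coefficients are collected into the single constant $\mathcal{B}$, with the factors $\frac{2}{p}$ and $2$ appearing as in its definition.

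Finally I take total expectations, telescope over $t=0,\dots,T$, and use $\moreaup[1/\brho]{x_{T+1}}\ge\min F$. Multiplying through by $4\brho$ (or adjusting constants to match the stated normalisation $2\brho$ and $\brho^2\mathcal{B}$) gives \cref{eq:model_based_conv_1}, with the error terms summed over $t$. I expect the main obstacle to be this constant bookkeeping, in particular matching the $\frac{p}{2}$ factors from \cref{lem:model_based_lem_p} against the weights $\frac{\brho}{p}$ and $\frac{\brho}{2}$, so that the negative terms dominate with the uniform constant $\mathcal{A}$. The comparison $\beta_t-\eta\ge\beta_t-\trho$, needed to merge powers, must also be checked in the case $p=2$. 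In that case $G_p=C=0$ and the $(2-p)^{-1}$ exponents are interpreted via the convention of \cref{remark:recover_weakly_convex}.
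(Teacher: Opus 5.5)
Your route is the same as the paper's: pick $z_t$ via \cref{thm:generalized_moreau_paraweak}~(iii), test $z_t$ in the envelope at $x_{t+1}$, add $\frac{\bar\rho}{2}$ times the squared-distance descent lemma and $\frac{\bar\rho}{p}$ times \cref{lem:model_based_lem_p}, convert the factors $(\beta_t-\eta)^{-1}$ and $(\beta_t-\eta)^{-p/2}$ into powers of $(\beta_t-\tilde\rho)^{-1}$, and telescope. Your exponent bookkeeping is right. The entry $2D$ in $\mathcal{B}$ is there because $\frac{2}{p}D^{p/2}\le 2D$. You also have the correct formula $\|\nabla j_h(u)\|=\|u\|^{p-1}+\|u\|$; the paper's display has a typo there. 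There is, however, one lossy step, and it means you do not prove the inequality as stated.

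Write $u_t=z_t-x_t$, $d_t=d\bigl(0,\partial M^{1/\bar\rho}_F(x_t)\bigr)$, and $A,B$ for $A_{\bar\rho,\eta},B_{\bar\rho,\tau,p}$. You bound $d_t^2\le 2\bar\rho^2\bigl(\|u_t\|^2+\|u_t\|^{2(p-1)}\bigr)$ and discard the $\|u_t\|^p$ pieces of the negative term. That leaves a per-step decrease of only $\frac{\mathcal{A}}{4\bar\rho}(\beta_t-\eta)^{-1}d_t^2$, against the same error $\frac{\bar\rho}{2}\mathcal{B}(\cdots)$. Multiplying the telescoped sum by $4\bar\rho$ then gives the right-hand side $4\bar\rho\bigl(M^{1/\bar\rho}_F(x_0)-\min F\bigr)+2\bar\rho^2\mathcal{B}\sum_t(\cdots)$, exactly twice the claimed bound. ``Adjusting constants to match'' cannot fix this, because the stated inequality is the stronger one.

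The missing observation is that the terms you drop are exactly what absorbs the cross term. The expansion
\[
d_t^2=\bar\rho^2\bigl(\|u_t\|+\|u_t\|^{p-1}\bigr)^2=\bar\rho^2\bigl(\|u_t\|^2+2\|u_t\|^p+\|u_t\|^{2(p-1)}\bigr)
\]
is an equality, and the combined negative term satisfies
\[
A\|u_t\|^2+(A+B)\|u_t\|^p+B\|u_t\|^{2(p-1)}\ \ge\ \mathcal{A}\bigl(\|u_t\|^2+2\|u_t\|^p+\|u_t\|^{2(p-1)}\bigr)=\mathcal{A}\,\bar\rho^{-2}d_t^2 .
\]
Keeping these terms gives a decrease of $\frac{\mathcal{A}}{2\bar\rho}(\beta_t-\eta)^{-1}d_t^2$ per step. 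Multiplying the telescoped sum by $2\bar\rho$ then yields exactly $2\bar\rho\bigl(M^{1/\bar\rho}_F(x_0)-\min F\bigr)+\bar\rho^2\mathcal{B}\sum_t(\cdots)$, which is what the paper's proof does.
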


\begin{proof}
    For each $t$, by \cref{thm:generalized_moreau_paraweak}~(iii), one can pick $z_t \in \gproxp[1/\brho]{x_t}$ such that
    \begin{align*}
        d \pa{ 0, \moreaup[1/\brho]{x_t} } = \brho\pa{ \norm[2]{z_t-x_t} + \norm[p-1]{z_t-x_t} }.
    \end{align*}
    
    From the definition of the generalized Moreau envelope, and the estimates in \cref{lem:model_based_lem_1,lem:model_based_lem_p}, one successively deduces
    \begin{subequations}
        \begin{align*}
            &\E[t]{ \moreaup[1/\brho]{x_{t+1}} } \\
            &\leq F(z_t) + \E[t]{ \frac{\brho}{2}\norm[2]{z_t-x_{t+1}} + \frac{\brho}{p}\norm[p]{z_t-x_{t+1}} } \\
            &\leq \moreaup[1/\brho]{x_t} - \frac{\brho}{2} (\beta_t-\eta)^{-1} \mathcal{A} \pa{ \norm[2]{z_t-x_t} + 2 \norm[p]{z_t-x_t} + \norm[2(p-1)]{z_t-x_t} } \\
            &\quad + \frac{\brho}{2} \mathcal{B} \left( (\beta_t-\trho)^{-\frac{2}{2-p}}
            + L_1^2 (\beta_t-\trho)^{-2}
            + (\beta_t-\trho)^{-\frac{p}{2-p}}
            + L_1^p (\beta_t-\trho)^{-p} \right. \nonumber \\ 
            &\qquad\qquad \left.+ L_2^{\frac{2}{2-\nu}} (\beta_t-\trho)^{-\frac{2}{2-\nu}}
            + L_2^{\frac{p}{2-\nu}} (\beta_t-\trho)^{-\frac{p}{2-\nu}} \right),
        \end{align*}
    \end{subequations}
    where the last step uses the fact that $\beta_t-\eta \geq \beta_t-\trho$, together with the definition of $\mathcal{A}$ and $\mathcal{B}$. Note that from the definition of $z_t$, one has:
    \begin{align*}
        d \pa{ 0, \moreaup[1/\brho]{x_t} }^2 = \brho^2 \pa{ \norm[2]{z_t-x_t} + 2 \norm[p]{z_t-x_t} + \norm[2(p-1)]{z_t-x_t}}.
    \end{align*}

    Therefore, after rearranging, taking the full expectation, and taking the sum from $t=0$ to $T$, one obtains \cref{eq:model_based_conv_1}.
\end{proof}

In \cref{thm:model_based}, one can notice that the last two terms in \cref{eq:model_based_conv_1} depend on $\beta_t$ with exponents depending on $\nu$. These two terms result from the second term on the RHS of \cref{eq:stochastic_one_sided_model_3}. We highlight that \cref{assump:stochastic_one_sided_model}~(iv) can be seen as a combined Lipschitz-continuous and Hölder-continuous property of the stochastic model $f_x(., \xi)$, which is more general than the Lipschitz-continuous model studied in \cite{davis_stochastic_2019} (see their Assumption B4). As a consequence, when $L_2 \neq 0$, the convergence rate of \cref{alg:model_based} can be slower than the rate $\mathcal{O}\pa{ T^{1/p-1} }$ stated in \cref{thm:conv_psgd} of the stochastic proximal subgradient methods in \cref{alg:psgd} (and therefore the rate $\mathcal{O}(T^{-1/2})$ of the stochastic model-based methods shown in \cite{davis_stochastic_2019} for the case $p=2$); while when $L_2=0$, one can recover the rate $\mathcal{O}\pa{ T^{1/p-1} }$ using the same step size rule in \cref{thm:conv_psgd}. The following two corollaries show the corresponding convergence rates for these two cases.

\begin{corollary}[Convergence rate of \cref{alg:model_based} for Lipschitz-continuous models]\label{cor:model_based_conv_1}
    Under the setting of \cref{thm:model_based}, if $L_2=0$, and one chooses $\beta_t = \trho + (t+1)^{1/p}$, for $\Delta \geq \moreaup[1/\brho]{x_0}-\min F$, then in the case $p \in (1, 2)$, the output $x_{t^*}$ of \cref{alg:model_based} satisfies:
    \begin{align*}
        & \E{ d \pa{ 0, \moreaup[1/\brho]{x_{t^*}} }^2 } \nonumber \\
        & \leq \frac{\brho(\brho-\eta) \pa{ 2 \Delta + \brho \mathcal{B} }}{\mathcal{A}(T+1)} \nonumber \\
        & \quad + \frac{\brho^2 \mathcal{B}}{\mathcal{A} (T+1)^{1-1/p} } \left( 
            \frac{2 \Delta}{\brho \mathcal{B}} + \frac{2}{(p-1)^2+1} + \frac{2L_1^2}{2-p} + \frac{1}{p-1} + L_1^p \pa{ \ln(T+1)+1 }
        \right).
    \end{align*}

    In the case where $p=2$, the output $x_{t^*}$ satisfies:
    \begin{align*}
        & \E{ d \pa{ 0, \moreaup[1/\brho]{x_{t^*}} }^2 } \nonumber \\
        & \leq \frac{\brho(2\brho+\tau-\eta) \pa{ 2 \Delta + \brho \mathcal{B} }}{\mathcal{A}(T+1)}
        + \frac{2 \brho^2 \mathcal{B}}{\mathcal{A} (T+1)^{1/2} } \left( 
            \frac{\Delta}{\brho \mathcal{B}} + 1 + L_1^2 \pa{ \ln(T+1)+1 }
        \right).
    \end{align*}
    
\end{corollary}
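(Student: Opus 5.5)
The plan is to specialize \cref{thm:model_based} to $L_2=0$ and $\beta_t-\trho=(t+1)^{1/p}$, and then normalize by the sampling weights of \cref{alg:model_based}. I read the right-hand side of \cref{eq:model_based_conv_1} as containing $\sum_{t=0}^T$ in front of the bracketed step-size terms; this is how it arises from summing the one-step descent inequality in the proof of \cref{thm:model_based}. With $L_2=0$, the two $\nu$-dependent terms vanish. Substituting $\beta_t-\trho=(t+1)^{1/p}$, the remaining terms become
\[
\sum_t (t+1)^{-\frac{2}{p(2-p)}},\quad L_1^2\sum_t (t+1)^{-2/p},\quad \sum_t (t+1)^{-\frac{1}{2-p}},\quad L_1^p\sum_t (t+1)^{-1}.
\]
For $p\in(1,2)$ the first three exponents exceed $1$, so \cref{fact:sum_step_size}~(i) bounds them by $\beta/(\beta-1)$. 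Concretely:
\begin{itemize}
\item for $\beta=\frac{2}{p(2-p)}$ one gets $\frac{2}{2-2p+p^2}=\frac{2}{(p-1)^2+1}$;
\item for $\beta=2/p$ one gets $\frac{2}{2-p}$;
\item for $\beta=\frac1{2-p}$ one gets $\frac1{p-1}$.
\end{itemize}
The last term is the harmonic sum, which \cref{fact:sum_step_size}~(ii) bounds by $\ln(T+1)+1$. These produce exactly the constants in the bracket of the claimed bound.

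Next I handle the left-hand side. Since $t^*$ is drawn with probability proportional to $\mathcal{A}(\beta_t-\eta)^{-1}$, we have
\[
\E{d(0,\partial\moreaup[1/\brho]{x_{t^*}})^2}=\frac{\E{\sum_t\mathcal{A}(\beta_t-\eta)^{-1}d(0,\partial\moreaup[1/\brho]{x_t})^2}}{\mathcal{A}\sum_t(\beta_t-\eta)^{-1}}.
\]
So I need a lower bound on $\sum_t(\beta_t-\eta)^{-1}$. Writing $\beta_t-\eta=(\trho-\eta)+(t+1)^{1/p}\le(\trho-\eta)(T+1)^{0}+(T+1)^{1/p}$, each term is at least $\big((\trho-\eta)+(T+1)^{1/p}\big)^{-1}$. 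Hence
\[
\frac{1}{\sum_t(\beta_t-\eta)^{-1}}\le\frac{(\trho-\eta)+(T+1)^{1/p}}{T+1}=\frac{\trho-\eta}{T+1}+\frac{1}{(T+1)^{1-1/p}}.
\]
Here $\trho-\eta=\brho-\eta$ when $p<2$ and $2\brho+\tau-\eta$ when $p=2$, which matches the two stated leading coefficients.

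Finally I multiply. Let $N$ be the numerator bound $2\brho\Delta+\brho^2\mathcal{B}\cdot(\text{bracket sums})$, using $\Delta\ge\moreaup[1/\brho]{x_0}-\min F$.
\begin{itemize}
\item The $1/(T+1)$ part should carry only the $T$-independent constant $\brho(2\Delta+\brho\mathcal{B})$. This is where the grouping needs care: the $\ln(T+1)$ term cannot sit there without a log factor. I expect the authors' grouping to be slightly loose; my plan is to bound the bracketed constants crudely so that the first term reads $\frac{(\trho-\eta)\brho(2\Delta+\brho\mathcal{B})}{\mathcal{A}(T+1)}$. If the logarithm refuses to fit, I would instead accept an extra $\ln(T+1)/(T+1)$ contribution or absorb it into the second term.
\item The $(T+1)^{-(1-1/p)}$ part gives $\frac{\brho^2\mathcal{B}}{\mathcal{A}}\big(\frac{2\Delta}{\brho\mathcal{B}}+\dots\big)$ after factoring out $\brho^2\mathcal{B}$.
\end{itemize}

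For $p=2$, the constants $C=0$ and $G_2=0$ kill the $p/(2-p)$ terms. Then $\beta_t-\trho=(t+1)^{1/2}$, and the surviving sums are:
\begin{itemize}
\item $L_1^2\sum_t(t+1)^{-1}$ from the $L_1^2(\beta_t-\trho)^{-2}$ term, bounded by the log;
\item $L_1^p\sum_t(t+1)^{-1}$ with $p=2$, again bounded by the log;
\item the constant terms, bounded by $1$ after the same crude absorption, which I take as the source of the bracket $\big(\frac{\Delta}{\brho\mathcal B}+1+L_1^2(\ln(T+1)+1)\big)$ with the factor $2$ in front.
\end{itemize}

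The main obstacle is this bookkeeping of constants: making the $T$-independent and logarithmic pieces land in the stated terms. The underlying argument is just the weighted-average bound.
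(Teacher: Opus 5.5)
Your route is the paper's: divide \cref{eq:model_based_conv_1} by $\mathcal{A}\sum_t(\beta_t-\eta)^{-1}$ (the sampling weights of $t^*$) and use $\sum_{t=0}^T(\beta_t-\eta)^{-1}\ge (T+1)/(\trho-\eta+(T+1)^{1/p})$. You then bound the step-size sums with \cref{fact:sum_step_size}. Your exponents and constants $\frac{2}{(p-1)^2+1}$, $\frac{2}{2-p}$, $\frac{1}{p-1}$, $\ln(T+1)+1$ coincide with the paper's. However, the step you flagged is a genuine obstruction, and your first plan cannot work. Let $S_T$ denote the bracketed sum; for $p\in(1,2)$, $S_T=\frac{2}{(p-1)^2+1}+\frac{2L_1^2}{2-p}+\frac{1}{p-1}+L_1^p(\ln(T+1)+1)$. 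Multiplying out gives
\[
\frac{\brho(\trho-\eta)\bigl(2\Delta+\brho\mathcal{B}S_T\bigr)}{\mathcal{A}(T+1)}
+\frac{\brho^2\mathcal{B}}{\mathcal{A}(T+1)^{1-1/p}}\Bigl(\frac{2\Delta}{\brho\mathcal{B}}+S_T\Bigr),
\]
so the $1/(T+1)$ term carries $\brho\mathcal{B}S_T$, not $\brho\mathcal{B}$.

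No crude bounding can bring that coefficient down to $1$. You only have upper bounds on these sums, and the exact sums are already at least $2+L_1^2+L_1^p$ from their $t=0$ terms; when $L_1\neq 0$ the last one also grows like $\ln T$. A sharper estimate of the weight sum does not help either. At $T=0$, $L_1=0$, $p=3/2$, all that \cref{thm:model_based} yields is $(\trho-\eta+1)(2\brho\Delta+2\brho^2\mathcal{B})/\mathcal{A}$. This exceeds the stated right-hand side $\bigl[(\trho-\eta+1)2\brho\Delta+(\trho-\eta+\frac{18}{5})\brho^2\mathcal{B}\bigr]/\mathcal{A}$ whenever $\trho-\eta>\frac{8}{5}$. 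The paper's own proof (``plugging back \ldots{} proves the results'') passes over exactly this point: its displayed first term drops the factor $S_T$.

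So what your argument (and the paper's) actually proves is the display above. Since $(T+1)^{-1}\le (T+1)^{-(1-1/p)}$, it can also be written as the single bound $(1+\trho-\eta)\,\frac{\brho^2\mathcal{B}}{\mathcal{A}(T+1)^{1-1/p}}\bigl(\frac{2\Delta}{\brho\mathcal{B}}+S_T\bigr)$, with the same $\mathcal{O}(\ln T/T^{1-1/p})$ rate. You should commit to that fallback rather than claim the literal first term.

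For $p=2$, the first term has the same defect. Removing the two infinite-exponent terms via $C_{\brho,\tau,2}=G_2=0$ is legitimate, but it must be done inside the proofs of the two descent lemmas, because in \cref{thm:model_based} those terms carry the coefficient $\mathcal{B}$. Doing so gives $S_T=2L_1^2(\ln(T+1)+1)$. This is smaller than the paper's bracket $2+2L_1^2(\ln(T+1)+1)$, so the stated second term holds with room to spare; your ``$+1$ after crude absorption'' is not needed there.
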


\begin{proof}
    Dividing two sides of \cref{eq:model_based_conv_1} by $\sum_{t=0}^T \mathcal{A} (\beta_t-\eta)^{-1}$ gives:
    \begin{align}\label{eq:model_based_cor_1}
        &\E{ d \left(0, \moreaup[1/\brho]{x_{t^*}} \right)^2 } \nonumber \\
        &\le \frac{2 \brho \pa{ \moreaup[1/\brho]{x_0} - \min F }}{\sum_{t=0}^T \mathcal{A}(\beta_t-\eta)^{-1}} \nonumber \\
        &\quad+ \frac{\brho^2 \mathcal{B}}{\sum_{t=0}^T \mathcal{A}(\beta_t-\eta)^{-1}} \sum_{t=0}^T \left( 
            (\beta_t-\trho)^{-\frac{2}{2-p}}
            + L_1^2 (\beta_t-\trho)^{-2}
            + (\beta_t-\trho)^{-\frac{p}{2-p}} \right. \nonumber \\
            &\qquad\qquad\qquad\qquad\qquad\qquad \left.+ L_1^p (\beta_t-\trho)^{-p}
        \right).
    \end{align}

    Then, by appealing to \cref{fact:sum_step_size}, one has the following estimates:
    \begin{align*}
        & \sum_{t=0}^T (\beta_t-\eta)^{-1} \geq \frac{T+1}{\trho-\eta + (T+1)^{1/p}} \\
        & \sum_{t=0}^T (\beta_t-\trho)^{-\frac{2}{2-p}} = \sum_{t=0}^T (t+1)^{-\frac{2}{p(2-p)}}
        \leq \frac{2}{(p-1)^2+1}, \\
        & \sum_{t=0}^T (\beta_t-\trho)^{-2} = \sum_{t=0}^T (t+1)^{-2/p} 
        \leq \begin{cases}
             \frac{2}{2-p} & \text{if}~ p \in (1, 2), \\
             \ \ln(T+1)+1 & \text{if}~ p = 2,
        \end{cases} \\
        & \sum_{t=0}^T (\beta_t-\trho)^{-\frac{p}{2-p}} = \sum_{t=0}^T (t+1)^{-\frac{1}{2-p}}
        \leq \frac{1}{p-1}, \\
        & \sum_{t=0}^T (\beta_t-\trho)^{-p} = \sum_{t=0}^T (t+1)^{-1} \leq \ln(T+1)+1.
    \end{align*}

    Plugging back into \cref{eq:model_based_cor_1} proves the results.
\end{proof}

The results in \cref{cor:model_based_conv_1} for Lipschitz-continuous models recover the rate $\mathcal{O}(T^{1/p-1})$ of \cref{alg:psgd}, and hence the rate $\mathcal{O}(T^{-1/2})$ proven in \cite{davis_stochastic_2019}. For models that are not necessarily Lipschitz, i.e., $L_2\neq 0$, notice that the smallest exponent appearing in \cref{eq:model_based_conv_1} is $p/(2-\nu)$, which can be smaller than or equal to $1$ for $p+\nu\leq2$, and hence in this case the term with this exponent might not vanish as $T \to \infty$ regardless of the choice of step sizes. Therefore, in order to establish the convergence of \cref{alg:model_based} for Lipschitz and Hölder-continuous models, we might need to impose an additional assumption $p+\nu > 2$. The following corollary gives the convergence rate in this setting.

\begin{corollary}[Convergence of \cref{alg:model_based} for Lipschitz and \\ Hölder-continuous models]\label{cor:model_based_conv_2}
    Under the setting of \cref{thm:model_based}, if one chooses $\beta_t = \trho + (t+1)^{(2-\nu)/p}$, for $\Delta \geq \moreaup[1/\brho]{x_0}-\min F$, then in the case $p \in (1, 2),~ \nu \in (2-p, 1)$, the output $x_{t^*}$ of \cref{alg:model_based} satisfies:
    \begin{align*}
        & \E{ d \pa{ 0, \moreaup[1/\brho]{x_{t^*}} }^2 } \nonumber \\
        & \leq \frac{\brho(\brho-\eta) \pa{ 2 \Delta + \brho \mathcal{B} }}{\mathcal{A}(T+1)} \nonumber \\
        & \quad + \frac{\brho^2 \mathcal{B}}{\mathcal{A} (T+1)^{1-\frac{2-\nu}{p}}} \left( 
            \frac{2 \Delta}{\brho \mathcal{B}} + \frac{2(2-\nu)}{(p-1)^2+3-2\nu} + \frac{2(2-\nu)}{2(2-\nu)-p} L_1^2 + \frac{2-\nu}{p-\nu} \right. \\
            &\qquad\qquad\qquad\qquad\qquad \left. + \frac{2-\nu}{1-\nu} L_1^p + \frac{2}{2-p} L_2^{\frac{2}{2-\nu}}
            + L_2^{\frac{p}{2-\nu}} \pa{ \ln(T+1)+1 }
        \right).
    \end{align*}
    In the case where $p \in (1, 2),~ \nu=1$, the output $x_{t^*}$ satisfies:
    \begin{align*}
        \E{ d \pa{ 0, \moreaup[1/\brho]{x_{t^*}} }^2 }
        & \leq \frac{\brho(\brho-\eta) \pa{ 2 \Delta + \brho \mathcal{B} }}{\mathcal{A}(T+1)} \\
        & \quad + \frac{\brho^2 \mathcal{B}}{\mathcal{A} (T+1)^{1-1/p} } \left( 
            \frac{2 \Delta}{\brho \mathcal{B}} + \frac{2}{(p-1)^2+1} + \frac{2(L_1^2+L_2^2)}{2-p} \right. \\
        &\qquad\qquad\qquad\qquad+ \left. 
        \frac{1}{p-1} + (L_1^p+L_2^p) \pa{ \ln(T+1)+1 }
        \right).
    \end{align*}
    In the case where $p=2,~ \nu \in (0, 1)$, the output $x_{t^*}$ satisfies:
    \begin{align*}
        & \E{ d \pa{ 0, \moreaup[1/\brho]{x_{t^*}} }^2 } \nonumber \\
        & \leq \frac{\brho(2\brho+\tau-\eta) \pa{ 2 \Delta + \brho \mathcal{B} }}{\mathcal{A}(T+1)} \\
        &\quad+ \frac{2 \brho^2 \mathcal{B}}{\mathcal{A} (T+1)^{\nu/2} } \left( 
            \frac{\Delta}{\brho \mathcal{B}} + 1 + \frac{2-\nu}{1-\nu} L_1^2 + \pa{ \ln(T+1)+1 } L_2^{\frac{2}{2-\nu}}
        \right).
    \end{align*}
    In the case where $p=2,~ \nu =1$, the output $x_{t^*}$ satisfies:
    \begin{align*}
        & \E{ d \pa{ 0, \moreaup[1/\brho]{x_{t^*}} }^2 } \nonumber \\
        & \leq \frac{\brho(2\brho+\tau-\eta) \pa{ 2 \Delta + \brho \mathcal{B} }}{\mathcal{A}(T+1)}
        + \frac{2 \brho^2 \mathcal{B}}{\mathcal{A} (T+1)^{1/2} } \left( 
            \frac{\Delta}{\brho \mathcal{B}} + 1 + \pa{ \ln(T+1)+1 } (L_1^2+L_2^2)
        \right).
    \end{align*}
\end{corollary}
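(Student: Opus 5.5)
The proof follows the template of \cref{cor:model_based_conv_1}: specialize \cref{eq:model_based_conv_1} to the step rule $\beta_t = \trho + (t+1)^{(2-\nu)/p}$ and bound each sum by \cref{fact:sum_step_size}.

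\textbf{Reduction.} Since the sampling law of $t^*$ is proportional to $\mathcal{A}(\beta_t-\eta)^{-1}$, dividing both sides of \cref{eq:model_based_conv_1} by $\sum_{t=0}^T \mathcal{A}(\beta_t-\eta)^{-1}$ gives the analogue of \cref{eq:model_based_cor_1}. The only difference is that the two $L_2$-terms are now kept.

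\textbf{Denominator.} Because $\beta_t-\eta \le \trho-\eta+(T+1)^{(2-\nu)/p}$ for every $t\le T$,
\begin{equation*}
\sum_{t=0}^T (\beta_t-\eta)^{-1} \ge \frac{T+1}{\trho-\eta+(T+1)^{(2-\nu)/p}}.
\end{equation*}
Inverting and splitting the numerator $\trho-\eta+(T+1)^{(2-\nu)/p}$ yields two pieces:
\begin{itemize}
\item a term $(\trho-\eta)/(T+1)$, which produces the first summand with $\trho-\eta=\brho-\eta$ for $p<2$ and $2\brho+\tau-\eta$ for $p=2$;
\item a term $(T+1)^{-(1-(2-\nu)/p)}$, which carries the main rate.
\end{itemize}
The constant $\brho\mathcal{B}$ that accompanies $2\Delta$ in the first summand should come from bounding the leading $t=0$ contribution, where every $(\beta_0-\trho)^{-s}=1$. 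The $t\ge1$ remainders then go into the second summand.

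\textbf{Numerator.} With $u=t+1$, each term is $u^{-s(2-\nu)/p}$ for the exponents $s$ in \cref{eq:model_based_conv_1}:
\begin{itemize}
\item $s=\tfrac{2}{2-p}$ gives exponent $\tfrac{2(2-\nu)}{p(2-p)}$;
\item $s=2$ gives $\tfrac{2(2-\nu)}{p}$;
\item $s=\tfrac{p}{2-p}$ gives $\tfrac{2-\nu}{2-p}$;
\item $s=p$ gives $2-\nu$;
\item $s=\tfrac{2}{2-\nu}$ gives $\tfrac{2}{p}$;
\item $s=\tfrac{p}{2-\nu}$ gives exactly $1$.
\end{itemize}
The last exponent is why $L_2^{p/(2-\nu)}$ picks up the factor $\ln(T+1)+1$, by \cref{fact:sum_step_size}~(ii). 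Every other exponent exceeds $1$ under the stated hypotheses, so \cref{fact:sum_step_size}~(i) gives bounds $\beta/(\beta-1)$. For instance, the $L_1^p$ term gives $\tfrac{2-\nu}{1-\nu}$ when $\nu<1$, and the $L_2^{2/(2-\nu)}$ term gives $\tfrac{2}{2-p}$.

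\textbf{Case split.} The four cases are distinguished by which of these exponents equal $1$:
\begin{itemize}
\item For $\nu=1$, the $L_1^p$ and $L_2^p$ terms both have exponent $1$. The $L_1^2$ and $L_2^2$ terms share exponent $2/p$, which recovers \cref{cor:model_based_conv_1} with $L_1$ replaced by combinations of $L_1$ and $L_2$.
\item For $p=2$, $C_{\brho,\tau,p}=0$ and $G_p=0$, so the $(2-p)$-exponent terms drop out. One has $\trho=2\brho+\tau$, and the rate exponent becomes $1-(2-\nu)/2=\nu/2$.
\end{itemize}

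\textbf{Main obstacle.} The delicate step is bookkeeping: verifying that each exponent is strictly greater than $1$ under $\nu>2-p$, where the constraint $p+\nu>2$ enters, and matching the stated constants such as $\tfrac{2(2-\nu)}{(p-1)^2+3-2\nu}$. I expect these to come out of the computation $\beta/(\beta-1)$ possibly after crude upper bounds. If a constant does not match exactly, I would settle for a valid upper bound of the same form.
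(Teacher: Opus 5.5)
Your route is the paper's own. You divide the summed form of \cref{eq:model_based_conv_1} by $\sum_t\mathcal{A}(\beta_t-\eta)^{-1}$, use $\sum_{t=0}^T(\beta_t-\eta)^{-1}\ge (T+1)/(\trho-\eta+(T+1)^{(2-\nu)/p})$, and bound the six step-size sums with \cref{fact:sum_step_size}. Your exponents and the constants they produce are exactly the estimates the paper lists: for instance $\frac{2(2-\nu)}{(p-1)^2+3-2\nu}$ via $p(2-p)=1-(p-1)^2$, $\frac{2-\nu}{p-\nu}$, the logarithm for $L_2^{p/(2-\nu)}$, and the merging of terms when $\nu=1$.

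The step that fails is your explanation of the first summand. Write $N=2\brho\Delta+\brho^2\mathcal{B}\,\Sigma_T$, where $\Sigma_T$ is the sum over $t$ of the six step-size terms. Inverting the denominator gives
\[
\frac{N}{\mathcal{A}\sum_{t=0}^T(\beta_t-\eta)^{-1}}\le\frac{(\trho-\eta)\,N}{\mathcal{A}(T+1)}+\frac{N}{\mathcal{A}(T+1)^{1-\frac{2-\nu}{p}}},
\]
so the factor $(\trho-\eta)/(T+1)$ multiplies all of $\Sigma_T$. This breaks your plan in two ways:
\begin{itemize}
\item You cannot send the $t\ge1$ part of $\Sigma_T$ only into the second summand. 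Absorbing it there costs an extra factor $\trho-\eta$.
\item The $t=0$ part is not $1$. In the first case it is $2+L_1^2+L_1^p+L_2^{2/(2-\nu)}+L_2^{p/(2-\nu)}$.
\end{itemize}
A sharper treatment of the denominator does not help either. Since $\beta_t-\eta\ge\trho-\eta+1$, the left-hand side above is at least $(\trho-\eta+1)N/(\mathcal{A}(T+1))$. For fixed $T$ this exceeds the displayed right-hand side once $\trho-\eta$ is large. So no argument of the form ``divide \cref{eq:model_based_conv_1} by the weight sum'' yields the constant $2\Delta+\brho\mathcal{B}$.

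What the argument does prove is the stated bound with $2\Delta+\brho\mathcal{B}$ in the first summand replaced by $2\Delta+\brho\mathcal{B}\,\bar\Sigma_T$. Here $\bar\Sigma_T\ge\Sigma_T$ is the bound you get from \cref{fact:sum_step_size}, i.e.\ the same constants that appear in the second summand. This is also all that the paper's one-line ``plugging back'' delivers, so the mismatch lies in the displayed constant, not in your method. It only changes the $\mathcal{O}(\ln T/T)$ term, so the rates are unaffected. Your fallback, ``a valid upper bound of the same form,'' is therefore the right resolution, but it should replace the $t=0$ mechanism, not sit alongside it.

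A smaller point: the hypothesis $p+\nu>2$ is not what makes the exponents exceed $1$. For every $p\in(1,2)$ and $\nu\in(0,1)$, the exponents $\frac{2(2-\nu)}{p(2-p)},\ \frac{2(2-\nu)}{p},\ \frac{2-\nu}{2-p},\ 2-\nu,\ \frac{2}{p}$ all exceed $1$; the third does so because $p>1>\nu$. The condition $\nu>2-p$ is needed only to make $1-\frac{2-\nu}{p}>0$, i.e.\ so that the bound actually decays in $T$.
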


\begin{proof}
    The proof follows exactly the same steps as in the proof of \cref{cor:model_based_conv_1}, with the following estimates:
    \begin{align*}
        & \sum_{t=0}^T (\beta_t-\eta)^{-1} \geq \frac{T+1}{\brho-\eta + (T+1)^{\frac{2-\nu}{p}}} \\
        & \sum_{t=0}^T (\beta_t-\trho)^{-\frac{2}{2-p}} \leq \frac{2(2-\nu)}{(p-1)^2+3-2\nu}, \\
        & \sum_{t=0}^T (\beta_t-\trho)^{-2}
        \leq \begin{cases}
             \frac{2(2-\nu)}{2(2-\nu)-p} & \text{if}~ p \in (1, 2),~ \nu \in (2-p, 1) \\
             \frac{2}{2-p} & \text{if}~ p \in (1, 2),~ \nu=1,\\
             \frac{2-\nu}{1-\nu} & \text{if} ~ p = 2,~ \nu \in (0, 1), \\
             \ \ln(T+1)+1 & \text{if}~ p = 2,~ \nu=1,
        \end{cases} \\
        & \sum_{t=0}^T (\beta_t-\trho)^{-\frac{p}{2-p}} \leq \frac{2-\nu}{p-\nu}, \\
        & \sum_{t=0}^T (\beta_t-\trho)^{-p}
        \leq \begin{cases}
             \frac{2-\nu}{1-\nu} & \text{if}~ \nu \in (2-p, 1), \\
             \ \ln(T+1)+1 & \text{if}~ \nu=1,
        \end{cases} \\
        & \sum_{t=0}^T (\beta_t-\trho)^{-\frac{2}{2-\nu}}
        \leq \begin{cases}
             \frac{2}{2-p} & \text{if}~ p \in (1, 2), \\
             \ \ln(T+1)+1 & \text{if}~ p = 2,
        \end{cases} \\
        & \sum_{t=0}^T (\beta_t-\trho)^{-\frac{p}{2-\nu}}
        \leq \ln(T+1)+1.
    \end{align*}
\end{proof}

\subsection{Some algorithmic examples} 

We next look at some specific examples of \cref{alg:model_based}. For each case, we list the standard assumption under which the method is applicable and satisfies \cref{assump:stochastic_one_sided_model} for some $\tau, \eta, L > 0, ~ \nu \in (0, 1], p \in (1, 2)$. Convergence guarantees for each method then follow immediately from \cref{cor:model_based_conv_1,cor:model_based_conv_2}.

\textbf{Stochastic proximal subgradient} Consider the problem \cref{eq:cp_prob_model_based}, together with the following assumptions:
\begin{itemize}
    \item[$(\text{A}1)$] It is possible to draw i.i.d. realizations $\xi_1, \xi_2, \dots$ from $\pi$.
    \item[$(\text{A}2)$] The function $f$ is $(\rho_1, 0, p)$-paraconvex and the function $\varphi$ is $\rho_2$-weakly convex for some $\rho_1, \rho_2 \geq 0$ and $p \in (1, 2]$.
    \item[$(\text{A}3)$] There exists an open set $U \supset \dom{\varphi}$ and a measurable mapping $G: U \times \Omega \to \R$ such that $ \E[\xi \sim \pi]{G(x, \xi)} \in \partial f(x) $ for all $x \in U$.
    \item[$(\text{A}4)$] There exists a constant $L \geq 0$ such that $\E[\xi \sim \pi]{ \norm[2]{G(x, \xi)} } \leq L^2$ for all $x \in U$. 
\end{itemize}

The stochastic proximal subgradient method is a special case of \cref{alg:model_based} with the linear models
\begin{align*}
    f_x(y, \xi) = f(x, \xi) + \langle G(x, \xi), y-x \rangle.
\end{align*}

Obviously, \cref{assump:stochastic_one_sided_model}~(i) and \cref{assump:stochastic_one_sided_model}~(iii) are immediate with $\eta=\rho_2$, \cref{assump:stochastic_one_sided_model}~(ii) with $\tau=\rho_1$ follows from $(\text{A}3)$ and the subdifferential characterization of paraconvex function in \cref{charac-paraweak-ineq1}. \cref{assump:stochastic_one_sided_model}~(iv) with $L_1=L, L_2=0, \nu=0$ also follows immediately from $(\text{A}4)$.

We highlight that the above set of assumptions does not cover the setting studied in \cref{sec:proximal} for the stochastic proximal subgradient method. In particular, (A4) requires that the stochastic subgradients have bounded second moment, whereas \cref{assump:stochastic_grad_oracle}~(iii) only requires a bounded $p$-th moment.

\textbf{Stochastic prox-linear} Consider the problem \cref{eq:cp_prob_model_based} with
\begin{align*}
    f(x) = \E[\xi\sim\pi]{h(c(x, \xi), \xi)},
\end{align*}
and the following set of assumptions, which hold on an open set $U \supset \dom{\varphi}$:
\begin{itemize}
    \item[$(\text{B}1)$] We can draw i.i.d. realizations $\xi_1, \xi_2, \dots$ from $\pi$.
    \item[$(\text{B}2)$] The mappings $h: \R^m \times \Omega \to \R$ and $c: U \times \Omega \to \R^m$ are measurable.
    \item[$(\text{B}3)$] The function $\varphi$ is $\rho_1$-weakly convex.
    \item[$(\text{B}4)$] There exists square integrable functions $\ell_1, \ell_2, \ell_3: \Omega \to \R_+$ such that for $\pi$-a.e. $\xi \in \Omega$, the function $z \mapsto h(z, \xi)$ is $\rho_2$-weakly convex and $\ell_1(\xi)$-Lipschitz, the map $x \mapsto c(x, \xi)$ is $C^1$-smooth with $(\nu, \ell_2(\xi))$-Hölder continuous Jacobian with $\nu \in (0, 1]$, and $\left\lVert \nabla c(x, \xi) \right\rVert_{\text{op}} \leq \ell_3(\xi)$ for all $x \in U$.
\end{itemize}

The stochastic prox-linear method is \cref{alg:model_based} with the following weakly convex model:
\begin{align*}
    f_x(y, \xi) = h(c(x, \xi) + \nabla c(x, \xi)(y-x), \xi).
\end{align*}

Note that \cref{assump:stochastic_one_sided_model}~(iii) is immediate with $\eta=\rho_1+\rho_2$. \cref{assump:stochastic_one_sided_model}~(ii) holds with $\tau=\pa{\E[\xi]{ \ell_1(\xi)^2 }}^{1/2} \pa{\E[\xi]{\ell_2(\xi)^2}}^{1/2}$ and $p=\nu+1$:
\begin{align*}
    \E[\xi]{ f_x(y, \xi) - f(y) }
    &= \E[\xi]{ h(c(x, \xi) + \nabla c(x, \xi) (y-x), \xi) - h(c(y, \xi), \xi) } \\
    &\leq \E[\xi]{ \ell_1(\xi)  \norm{ c(x, \xi) - c(y, \xi) + \nabla c(x, \xi)(y-x) } } \\
    &\leq \pa{\E[\xi]{ \ell_1(\xi)^2 }}^{1/2} \pa{\E[\xi]{\ell_2(\xi)^2}}^{1/2} \norm[\nu+1]{x-y},
\end{align*}
where the first inequality is from the Lipschitzness of $h(., \xi)$, and the second inequality follows from the Hölder-continuity of $\nabla c(., \xi)$. $(\text{B}4)$ also implies \cref{assump:stochastic_one_sided_model}~(iv) with $L_1 = \pa{\E[\xi]{ \ell_1(\xi)^2 }}^{1/2} \pa{\E[\xi]{\ell_3(\xi)^2}}^{1/2}$ and $L_2=0$:
\begin{align*}
    f_x(x, \xi) - f_x(y, \xi)
    &= h(c(x, \xi), \xi) - h(c(x, \xi) + \nabla c(x, \xi)(y-x), \xi) \\
    & \leq \ell_1(\xi) \norm{ \nabla c(x, \xi) (y-x) } \\
    & \leq \ell_1(\xi) \ell_3(\xi) \norm{x-y}.
\end{align*}

We can weaken $(\text{B}4)$ by allowing $h(., \xi)$ to be only Hölder-continuous:
\begin{itemize}
    \item[$(\bar{\text{B}}4)$] There exists functions $\ell_1, \ell_2, \ell_3: \Omega \to \R_+$, for $\nu_1 \in (0, 1]$, $\E[\xi]{\ell_1(\xi)^{\nu_1+1}} < +\infty$, $\E[\xi]{\ell_2(\xi)^{2\nu_1}} < +\infty$ and $\E[\xi]{\ell_3(\xi)^{\nu_1(\nu_1+1)}} < +\infty$; for $\pi$-a.e. $\xi \in \Omega$, the function $z \mapsto h(z, \xi)$ is $\rho_2$-weakly convex and $(\nu_1, \ell_1(\xi))$-Hölder continuous, the map $x \mapsto c(x, \xi)$ is $C^1$-smooth with $(\nu_2, \ell_2(\xi))$-Hölder continuous Jacobian with $\nu_2 \in (0, 1], 1 < \nu_1(\nu_2+1) \leq 2$, and $\left\lVert \nabla c(x, \xi) \right\rVert_{\text{op}} \leq \ell_3(\xi)$ for all $x \in U$.
\end{itemize}

In this case, \cref{assump:stochastic_one_sided_model}~(ii) holds with $\tau=\pa{\E[\xi]{ \ell_1(\xi)^2 }}^{1/2} \pa{\E[\xi]{\ell_2(\xi)^{2\nu_1}}}^{1/2}$ and $p=\nu_1(\nu_2+1) \in (1, 2]$:
\begin{align*}
    \E[\xi]{ f_x(y, \xi) - f(y) }
    &= \E[\xi]{ h(c(x, \xi) + \nabla c(x, \xi) (y-x), \xi) - h(c(y, \xi), \xi) } \\
    &\leq \E[\xi]{ \ell_1(\xi)  \norm[\nu_1]{ c(x, \xi) - c(y, \xi) + \nabla c(x, \xi)(y-x) } } \\
    &\leq \pa{\E[\xi]{ \ell_1(\xi)^2 }}^{1/2} \pa{\E[\xi]{\ell_2(\xi)^{2\nu_1}}}^{1/2} \norm[\nu_1(\nu_2+1)]{x-y}.
\end{align*}

\cref{assump:stochastic_one_sided_model}~(iv) with $L_2=\pa{\E[\xi]{ \ell_1(\xi)^{\nu_1+1} }}^{\frac{1}{\nu_1+1}} \pa{\E[\xi]{\ell_3(\xi)^{\nu_1(\nu_1+1)}}}^{\frac{1}{\nu_1+1}}$, $L_1=0$ and $\nu=\nu_1$ also follows from:
\begin{align*}
    f_x(x, \xi) - f_x(y, \xi)
    &= h(c(x, \xi), \xi) - h(c(x, \xi) + \nabla c(x, \xi)(y-x), \xi) \\
    & \leq \ell_1(\xi) \norm[\nu_1]{ \nabla c(x, \xi) (y-x) } \\
    & \leq \ell_1(\xi) \ell_3(\xi)^{\nu_1} \norm[\nu_1]{x-y}.
\end{align*}

\section{Numerical experiments}\label{sec:experiment}

\begin{figure}
    \centering
    
    \begin{subfigure}{.9\linewidth}
        \begin{center}
            \centerline{\includegraphics[width=\columnwidth]{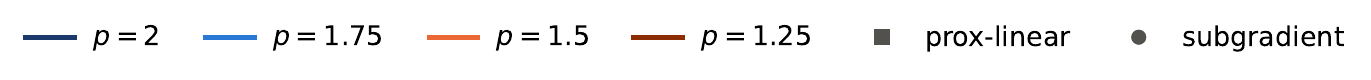}}
        \end{center}
    \end{subfigure}
    
    \begin{subfigure}{.45\linewidth}
        \begin{center}
            \centerline{\includegraphics[width=\columnwidth]{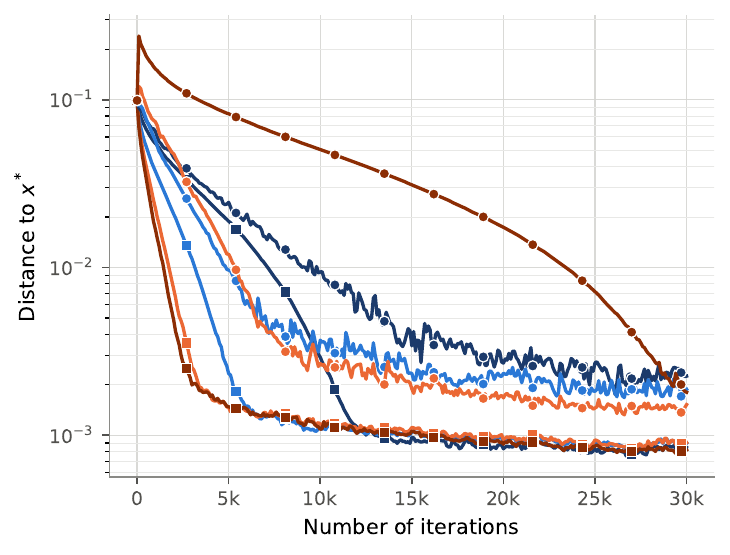}}
        \end{center}
    \end{subfigure}
    \begin{subfigure}{.45\linewidth}
        \begin{center}
            \centerline{\includegraphics[width=\columnwidth]{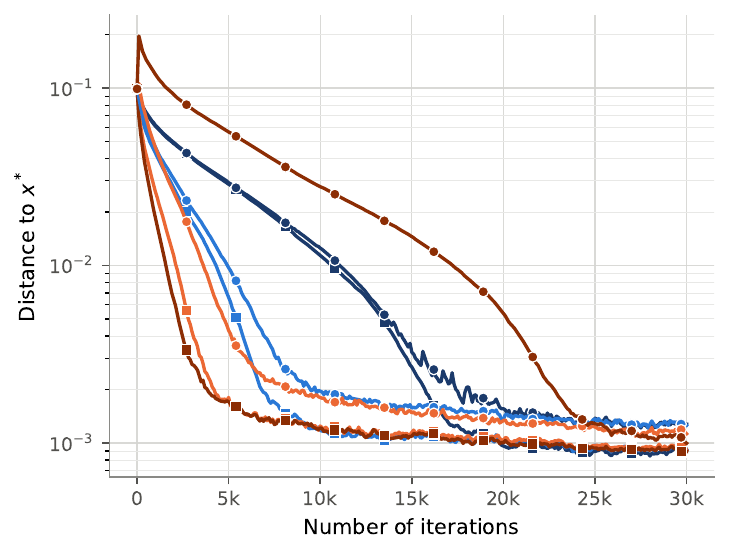}}
        \end{center}
    \end{subfigure}
    
    \caption{Distance to the true signal $x^*$ of the stochastic subgradient and the stochastic prox-linear method with $p \in \set{1.25, 1.5, 1.75, 2}$ and $\kappa = 2.5$ (on the left), $\kappa=3$ (on the right).}
    \label{fig:exp1}
\end{figure}


\begin{table}[t]
\centering
\caption{Speedup ratio $T_2/T_p$, where $T_p$ denotes the number of iterations needed to reach $\mathrm{dist}(x_t, x^*)/\|x^*\|\le 5\cdot10^{-3}$; values above $1$ indicate faster convergence. $^{\dagger}$Not all $20$ instances reached the target distance.}
\label{tab:iters_to_target}
\begin{tabular}{llrrr}
\toprule
Method & $\kappa_1$ & \multicolumn{3}{c}{Speedup ratio $T_2/T_p$} \\
\cmidrule(lr){3-5}
 & & $p=1.75$ & $p=1.5$ & $p=1.25$ \\
\midrule
Subgradient & $2.5$ & 1.9$\times^{\dagger}$ & 1.9$\times^{\dagger}$ & 0.5$\times^{\dagger}$ \\
 & $3$ & 2.1$\times$ & 3.0$\times$ & 0.7$\times^{\dagger}$ \\
\midrule
Prox-linear & $2.5$ & 2.2$\times$ & 3.7$\times$ & 4.3$\times$ \\
 & $3$ & 2.4$\times$ & 4.8$\times$ & 6.1$\times$ \\
\bottomrule
\end{tabular}
\end{table}

    
    

    

In this section, we perform numerical experiments on the robust phase retrieval problem with non-Gaussian channels and heavy-tailed noise. Our main goal is to study the benefit of using a paraconvex loss function instead of the classical weakly convex objective \cite{davis_stochastic_2019} in order to recover the true signal from noisy measurements. The setup is as follows: the true signal $x^*$ uniformly on the unit sphere, and the initial point $x_0$ uniformly inside a ball around $x^*$; the channels are non Gaussian: each entry of $a_i$ is generated independently from a Student-t distribution with degree-of-freedom (dof) $\kappa_1$; the measurements $b_i = \langle a_i, x^* \rangle^2 + \sigma \zeta_i$ is corrupted with noise $\zeta_i$ also drawn from a Student-t distribution with dof $\kappa_2$, and $\sigma$ is the noise level.

We then apply the stochastic subgradient and the stochastic prox-linear method to solve the following problem
\begin{align}\label{eq:exp:obj}
    \min_{x \in \R^n} \frac{1}{m} \sum_{i=1}^m \abs{ \abs{\langle a_i, x \rangle}^p - \operatorname{sign}(b) \abs{b}^{p/2} }
\end{align}
for $p \in \set{1.25, 1.5, 1.75, 2}$. This objective function is $(\rho, p)$-paraconvex as shown in \cref{example:composite_paraconvex}. Since our algorithms only use one data sample at a time, let us define the following functions
\begin{align*}
    h_i(x) &= \abs{ c_i(x) }, \\
    c_i(x) &= \abs{ \langle a, x \rangle }^p - \operatorname{sign}(b) \abs{b}^{p/2}. 
\end{align*}

Note that $c_i(x)$ is differentiable over $\R^n$ with
\begin{align*}
    \nabla c_i(x) = pa \abs{ \langle a, x \rangle }^{p-2} \langle a, x \rangle.
\end{align*}

The stochastic subgradient method only needs to pick an element of the subdifferential
\begin{align*}
    \partial h_i(x) = \nabla c_i(x) \left\{\begin{aligned}
        & \operatorname{sign}\pa{ c_i(x) } & \text{if} ~ c_i(x) \neq 0,\\
        &[-1, 1] & \text{otherwise}
    \end{aligned} \right\}.
\end{align*}

The stochastic prox-linear method requires solving the following subproblem for the update direction $d_t$ at each iteration:
\begin{align*}
    d_t = \margmin[d \in \R^d]{ \abs{ c_i(x) + \langle \nabla c_i(x), d \rangle } + \frac{1}{2\gamma_t} \norm[2]{d} },
\end{align*}
where $\gamma_t$ is the step size. Then the next iterate is computed as $x+d_t$. Following a standard Langragian calculation in \cite{duchi_stochastic_2018,davis_stochastic_2019},
\begin{align*}
    d_t = \operatorname{proj}_{[-1, 1]}\pa{ \frac{-c_i(x)}{ \gamma_t \norm[2]{\nabla c_i(x)} } } \nabla c_i(x).
\end{align*}

Henceforth, let us denote by $\set{x^{\dagger}_{p, t}}$ and $\set{x^{\natural}_{p, t}}$ the iterates generated by the stochastic subgradient and the stochastic prox-linear algorithms when minimizing \cref{eq:exp:obj} with exponent $p$. We report the results in terms of the function values and the distance to $x^*$
\begin{align*}
    \operatorname{dist}(x, x^*) = {\min \set{ \norm[2]{x-x^*}, \norm[2]{x+x^*} }}.
\end{align*}

We perform the experiments with the following set of hyperparameters: $n=100, m=300, \sigma=0.001$, dof $\kappa_1 \in \set{1, 2.5}$ and $\kappa_2=1$, and step size $\gamma_t = \gamma_0 (t+1)^{-1/p}$, where $\gamma_0$ is tuned over a logarithmic grid from $10^{-7}$ to $0.1$. Each run consists of $3\times10^4$ iterations; we repeat each set of experiments over $20$ runs with different random seeds and report the medians.

\Cref{fig:exp1} reports the distance to the true signal $x^*$. From this figure, one can see that the stochastic prox-linear method is faster than the stochastic subgradient method. Note that our theoretical guarantees suggest that these two methods exhibit the same worst-case convergence rate, but in practice, the benefit of the stochastic prox-linear method is understandable since the stochastic model employed in this algorithm provides a tighter two-sided approximation, instead of one-sided as in \cref{assump:stochastic_one_sided_model}~(ii). This empirical advantage of the prox-linear method has also been observed in the weakly-convex setting \cite{davis_stochastic_2019,duchi_stochastic_2018}.

More notably, one can also spot the benefit of using a paraconvex reformulation of the robust phase-retrieval problem \cref{eq:exp:obj} with exponent $p < 2$ in this heavy-tail setting, in terms of recovering the true signal $x^*$. To make the comparison more precise, \cref{tab:iters_to_target} reports the speedup in the number of iterations needed to reach $\mathrm{dist}(x_t, x^*)/\|x^*\|\le 5\cdot10^{-3}$, measured against the weakly convex case $p=2$. For the stochastic prox-linear method, the speedup grows steadily as $p$ decreases. The stochastic subgradient method also benefits from moderate exponents. At $p=1.25$, however, it is slower than the baseline, and some instances fail to reach the target. This suggests that the tighter prox-linear model can exploit smaller exponents, whereas the subgradient method calls for a moderate choice of $p$.

Taken together, these observations provide empirical support for the paraconvex formulation \cref{eq:exp:obj} of the robust phase retrieval problem under heavy-tailed noise.

\section{Conclusion}

This paper provides the first convergence analysis of stochastic optimization algorithms for minimizing paraconvex objectives. First, we study the stochastic subgradient method for solving unconstrained paraconvex problems. Second, we establish the convergence guarantee of the stochastic proximal subgradient method for composite paraconvex problems with convex regularizers. We then analyze the general stochastic model-based framework, in which the objective is accessed through stochastic models, and derive comprehensive convergence guarantees depending on approximation quality and continuity properties of these models. Numerical experiments on robust phase retrieval in heavy-tailed settings show the potential of using paraconvex objectives in real-world applications.

\begin{acknowledgements}
The first author is supported by Groupe La Poste, the sponsor of the Inria Foundation, in the framework of the FedMalin Inria Challenge, the Inria-Nokia Challenge Learn-Net, and the French National Agency for Research (ANR) under grant ANR-24-CE25-2256 TCDTP. The research of the second author is supported by the Vietnam Ministry of Education and Training under grant number B2026-DQN-01.
\end{acknowledgements}

\bibliographystyle{spmpsci}
\bibliography{references}
\end{document}